\newcommand{\Rmnum}[1]{\expandafter\@slowromancap\romannumeral #1@}
\begin{document}
	
	\pagestyle{myheadings} \markright{\sc Packing branchings under cardinality constraints on their root sets \hfill} \thispagestyle{empty}
	
	\newtheorem{theorem}{Theorem}[section]
	\newtheorem{corollary}[theorem]{Corollary}
	\newtheorem{definition}[theorem]{Definition}
	\newtheorem{guess}[theorem]{Conjecture}
	\newtheorem{claim}[theorem]{Claim}
	\newtheorem{problem}[theorem]{Problem}
	\newtheorem{question}[theorem]{Question}
	\newtheorem{lemma}[theorem]{Lemma}
	\newtheorem{proposition}[theorem]{Proposition}
	\newtheorem{fact}[theorem]{Fact}
	\newtheorem{acknowledgement}[theorem]{Acknowledgement}
	\newtheorem{algorithm}[theorem]{Algorithm}
	\newtheorem{axiom}[theorem]{Axiom}
	\newtheorem{case}[theorem]{Case}
	\newtheorem{conclusion}[theorem]{Conclusion}
	\newtheorem{condition}[theorem]{Condition}
	\newtheorem{conjecture}[theorem]{Conjecture}
	\newtheorem{criterion}[theorem]{Criterion}
	\newtheorem{example}[theorem]{Example}
	\newtheorem{exercise}[theorem]{Exercise}
	\newtheorem{notation}[theorem]{Notation}
	\newtheorem{observation}[theorem]{Observation}
	\newtheorem{solution}[theorem]{Solution}
	\newtheorem{summary}[theorem]{Summary}
	
	\newtheorem{thm}[theorem]{Theorem}
	\newtheorem{prop}[theorem]{Proposition}
	\newtheorem{defn}[theorem]{Definition}

\newtheorem{lem}[theorem]{Lemma}
\newtheorem{con}[theorem]{Conjecture}
\newtheorem{cor}[theorem]{Corollary}

\newenvironment{proof}{\noindent {\bf
		Proof.}}{\rule{3mm}{3mm}\par\medskip}
\newcommand{\remark}{\medskip\par\noindent {\bf Remark.~~}}
\newcommand{\pp}{{\it p.}}
\newcommand{\de}{\em}

\newcommand{\g}{\mathrm{g}}

\newcommand{\qf}{Q({\cal F},s)}
\newcommand{\qff}{Q({\cal F}',s)}
\newcommand{\qfff}{Q({\cal F}'',s)}
\newcommand{\f}{{\cal F}}
\newcommand{\ff}{{\cal F}'}
\newcommand{\fff}{{\cal F}''}
\newcommand{\fs}{{\cal F},s}
\newcommand{\cs}{\chi'_s(G)}

\newcommand{\G}{\Gamma}
\newcommand{\wrt}{with respect to }
\newcommand{\mad}{{\rm mad}}
\newcommand{\col}{{\rm col}}
\newcommand{\gcol}{{\rm gcol}}

\newcommand*{\ch}{{\rm ch}}
\newcommand*{\ra}{{\rm ran}}
\newcommand{\co}{{\rm col}}
\newcommand{\sco}{{\rm scol}}
\newcommand{\wc}{{\rm wcol}}
\newcommand{\dc}{{\rm dcol}}
\newcommand*{\ar}{{\rm arb}}
\newcommand*{\ma}{{\rm mad}}
\newcommand{\di}{{\rm dist}}
\newcommand{\tw}{{\rm tw}}
\newcommand{\scol}{{\rm scol}}
\newcommand{\wcol}{{\rm wcol}}
\newcommand{\td}{{\rm td}}
\newcommand{\edp}[2]{#1^{[\natural #2]}}
\newcommand{\epp}[2]{#1^{\natural #2}}
\newcommand*{\ind}{{\rm ind}}
\newcommand{\red}[1]{\textcolor{red}{#1}}

\def\C#1{|#1|}
\def\E#1{|E(#1)|}
\def\V#1{|V(#1)|}
\def\iarb{\Upsilon}
\def\ipac{\nu}
\def\nul{\varnothing}

\newcommand*{\QEDA}{\ensuremath{\blacksquare}}
\newcommand*{\QEDB}{\hfill\ensuremath{\square}}

\title{\Large\bf Packing branchings under cardinality constraints on their root sets}

\author{Hui Gao\thanks{E-mail: {\small\texttt{gaoh1118@yeah.net}}.}\\
	Center for Discrete Mathematics\\
	Fuzhou University\\
	Fuzhou, Fujian 350108, China
	\and
	Daqing Yang\thanks{Corresponding author,  grant number:	NSFC  11871439. E-mail: \small\texttt{dyang@zjnu.edu.cn}.} \\
	Department of Mathematics \\
	Zhejiang Normal University \\
	Jinhua, Zhejiang 321004, China
}

\maketitle

\begin{abstract}
	Edmonds' fundamental theorem on arborescences characterizes the existence of $k$ pairwise arc-disjoint spanning arborescences with prescribed root sets in a digraph. 
	In this paper, we study the problem of packing branchings in digraphs under cardinality constraints on their root sets by  arborescence augmentation.   
	Let	$D=(V+x,A)$ be a digraph, $\mathcal{P}=$ $\{I_{1}, \ldots, I_{l} \}$ be a partition of $[k]$,  $c_{1}, \ldots, c_{l}, c'_{1}, \ldots, c'_{l}$ be nonnegative integers such that $c_{\alpha} \leq c'_{\alpha}$ for $\alpha \in [l]$, $F_{1}, \ldots, F_{k}$ be $k$ arc-disjoint $x$-arborescences in $D$ such that $\sum_{i \in I_{\alpha}}d_{F_{i}}^{+}(x)$  $\leq c'_{\alpha}$ for $\alpha \in [l]$. We give a characterization on when  $F_{1}, \ldots, F_{k}$ can be completed to arc-disjoint spanning $x$-arborescences $F^{*}_{1}, \ldots, F^{*}_{k}$ such that for any $\alpha \in [l]$, $ c_{\alpha} \leq \sum_{i \in I_{\alpha}}d^{+}_{F^{*}_{i}}(x)$ $ \leq c'_{\alpha}$. 	
\end{abstract}

{\em Keywords: Arborescences; Branchings; Submodular functions; Coverings; Packings}

{\em AMS subject classifications.  05B35, 05C40, 05C70}

\section{Introduction}
In this paper, all digraphs can have multiple arcs but not loops. 
For a digraph $D$, denote its vertex set and arc set by $V(D)$ and $E(D)$, respectively. 
Let $\Omega$ be a set, $u \notin \Omega $ and $v \in \Omega$; for simplicity, we write $\Omega+u$ and $\Omega-v$ instead of $\Omega \cup \{ u\}$ and $\Omega \setminus \{v \}$. 
Let $D=(V+x,A)$ be a digraph,  $A_{0} \subseteq A$; denote by $E_{A_{0}}^{+}(x)$ the set of arcs in $A_{0}$ with $x$ as their tail, by $N_{A_{0}}^{+}(x)$ the set of heads of arcs in  $E_{A_{0}}^{+}(x)$; 
for simplicity, we write $E^{+}(x)$ for $E^{+}_{A}(x)$, $N^{+}(x)$ for $N_{A}^{+}(x)$.  
Let $X, Y \subseteq V+x$,  
denote by $[X,Y]_{D}$ the number of arcs in $D$ with their tails in $X$ and heads in $Y$.
Sometimes, instead of $[X,Y]_{D}$,  we  write 
$d_{D}^{+}(X)$ or $d^{-}_{D}(Y)$ when $Y= \overline{X}$; $[x_{0}, Y]_{D}$ when $X=\{x_{0}\}$; $[X,y_{0}]_{D}$ when $Y=\{y_{0} \}$; $[x_{0}, y_{0}]_{D}$ when $X=\{x_{0} \}$ and $Y=\{y_{0}\}$. 
We drop the subscript $D$ in the above notations when $D$ is clear from the context.
For simplicity, we do not distinguish between the arc set $A_0$  of $D$  and the subdigraph of $D$ spanned by $A_0$ itself. We denote the set $\{1,\ldots,k\}$ by $[k]$.

A subdigraph $F$ (it may not be spanning) of $D$ is called an \emph{$r$-arborescence} if its underlying graph is a tree; 
for any $u \in V(F)$,  there is exactly one directed path in $F$ from $r$ to $u$. 
The vertex $r$ is called \emph{root of the arborescence}. A \emph{branching} $B$ in $D$ is a spanning subdigraph each component of which is an arborescence, and the \emph{root set} $R(B)$ of $B$  consists of all roots of its components. Let $c$ be a positive integer. We call $B$ a \emph{$c$-branching, $c^{+}$-branching, $c^{-}$-branching} if $|R(B)|=c$, $|R(B)| \geq c$,  $|R(B)| \leq c$.

The $c$-branching is a directed version of $c$-forests in graphs. The covering  and packing  of graphs by  $c$-forests were first considered by Chen et al.~\cite{CKP}, further studied in \cite{CL}, their extensions and also matroidal version  
 have been studied in \cite{GJLWYZ}.
 The covering  and packing  of digraphs by branchings have been widely studied (cf. \cite{C-6,edmonds,L-31,F-13,frank}),
 and a lot of variations and generalizations have been developed, see the book of Schrijver \cite{Sch} or a recent survey by
 Kamiyama \cite{kam}.

In this paper, we study the problem of packing branchings in  digraphs under cardinality constraints on their root sets by arborescence augmentation. 
We also apply our new results to some classical covering  and packing problems of digraphs by branchings. 
The study of this topic  was initiated   
by the fundamental result of Edmonds (Theorem \ref{16}).  
In the recent years, due to a beautiful extension of Edmonds' classical result by
Kamiyama, Katoh and Takizawa \cite{kam-ka-ta-09} and by Fujishige \cite{fuji-10},  the research 	of this area became particularly active. In the last decade, a series of interesting results
appeared that provide extensions to intersecting bi-set families (first studied by B\'{e}rczi and Frank \cite{BF-1,BF-2}, see also \cite{berczi-16}),  
to kernel systems (by Leston-Rey and Wakabayashi \cite{le-wa-15}), or under matroid constraints( \cite{durand-nguyen-szigeti-13,fortier-kiraly-leonard-szigeti-talon-18,kiraly-16,kiraly-szigeti-tanigawa-18,matsuoka-tanigawa-19}), or under cardinality constraints (B\'{e}rczi and Frank \cite{berczi1,berczi2,berczi3,berczi4}).

\begin{thm} [\cite{edmonds}] \label{16}
 	Let $D$ be a digraph and $R_{1}, \ldots, R_{k}$ be nonempty subsets of $V(D)$. There exist arc-disjoint branchings $B_{i}$, $i=1, \ldots, k$, with root sets $R_{i}$ if and only if for any $\emptyset \neq X \subseteq V(D)$,
 	$d^{-}(X) \geq |\{ R_{i}: R_{i} \cap X=\emptyset \}|$.
\end{thm}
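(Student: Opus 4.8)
\medskip
\noindent\textbf{Proof sketch.}
The necessity of the cut condition is routine: if arc-disjoint branchings $B_1,\dots,B_k$ with root sets $R_1,\dots,R_k$ exist and $\emptyset\neq X\subseteq V(D)$, then for each index $i$ with $R_i\cap X=\emptyset$ every vertex of $X$ is a non-root of $B_i$, so the directed path of $B_i$ from the root of the component containing a fixed vertex of $X$ (that root lies in $R_i$, hence outside $X$) to that vertex must cross the boundary of $X$, producing an arc of $B_i$ entering $X$; since the $B_i$ are arc-disjoint, $d^{-}(X)\geq|\{i:R_i\cap X=\emptyset\}|$.

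For sufficiency my plan is induction on the total deficiency $\delta:=\sum_{i=1}^{k}\bigl(|V(D)|-|R_i|\bigr)$. If $\delta=0$, then $R_i=V(D)$ for all $i$ and the edgeless spanning subdigraphs are the required branchings. If $\delta>0$, fix $j$ with $R_j\neq V(D)$; I want to find an arc $a=(u,v)\in E(D)$ with $v\notin R_j$ such that the cut condition still holds for $D-a$ with the root sets $R_1,\dots,R_j+v,\dots,R_k$. The induction hypothesis then produces arc-disjoint branchings $B_1',\dots,B_k'$ for this smaller instance, and $B_j:=B_j'+a$ together with $B_i:=B_i'$ for $i\neq j$ should be the branchings we want, $B_j$ now having root set exactly $R_j$.

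The heart of the matter is finding $a$, and I would do it by uncrossing. Set $p(X):=|\{i:R_i\cap X=\emptyset\}|$ and call a nonempty $X\subseteq V(D)$ \emph{tight} if $d^{-}(X)=p(X)$. Since $d^{-}$ is submodular and $p$ is supermodular, $d^{-}-p$ is submodular and, by hypothesis, nonnegative; hence the tight sets are closed under intersection and union whenever their intersection is nonempty, and $V(D)$ is tight. A direct computation shows that passing to $D-a$ and $R_j+v$ violates the cut condition at a set $X$ exactly when $X$ is tight, $v\in X$, $u\notin X$ and $R_j\cap X\neq\emptyset$. So I need $v\in V(D)\setminus R_j$ and an in-arc $(u,v)$ of $v$ whose tail $u$ lies in \emph{every} tight set that contains $v$ and meets $R_j$. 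The plan is to choose $v$ inside an inclusion-minimal tight set $M$ that meets $R_j$ but is not contained in $R_j$ (such an $M$ exists, since $V(D)$ qualifies), to use minimality together with the uncrossing closure to show that every in-arc of $v$ coming out of $M$ is a legal choice of $a$, and to show, by a cut count on $M$ using the original hypothesis, that at least one such in-arc exists.

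The step I expect to be the real obstacle is making one choice of $a$ serve two purposes simultaneously: preserving the cut condition for \emph{all} sets $X$ at once (this is what the uncrossing argument is designed to do, but one must make sure the minimal tight set $M$ is the correct witness and that no new violated cut appears on deleting $a$), and keeping $B_j'+a$ a genuine branching, i.e.\ acyclic --- the danger being that the inductively produced $B_j'$ could have the tail $u$ inside the component rooted at $v$, creating a directed cycle. I would handle the acyclicity point either by choosing $v$ and $a$ so that the witnessing tight set precludes any $v$-to-$u$ dipath surviving in $D-a$, or, more cleanly, by recasting the induction as the arc-by-arc growth of a single branching $B_j$: maintain a forest $F$ with $R_j\subseteq R(F)$ and the invariant $d^{-}_{D-E(F)}(X)\geq|\{i\neq j:R_i\cap X=\emptyset\}|+\mathbf 1[R(F)\cap X=\emptyset]$ for all $\emptyset\neq X\subseteq V(D)$, adding at each stage an arc from a non-$R_j$ root of $F$ to a vertex in a different component; this keeps $F$ a forest automatically while leaving precisely the same cut-preservation lemma to be proved. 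One could instead first treat the case in which every $R_i$ is a single vertex by adjoining a new common root $s$ with arcs to the prescribed roots and invoking the classical single-root spanning-arborescence theorem, and then reduce the general statement to it; but controlling which $s$-arcs each arborescence uses introduces a matroidal constraint, so that reduction is not entirely free, and I would keep the direct uncrossing argument as the main line.
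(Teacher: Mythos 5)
The paper does not prove this statement; it is cited from Edmonds and used as the black box on which all the augmentation results are built, so there is no internal argument to compare against. Measured against the standard Lov\'asz uncrossing proof, your sketch has the right skeleton, but as written it would not compile into a proof for two reasons. First, the primary induction on $\delta$ (delete an arc $a=(u,v)$ from $D$, enlarge $R_j$ to $R_j+v$, recurse, then reattach $a$) genuinely fails on acyclicity, exactly as you suspect: the induction hands you arbitrary arc\mbox{-}disjoint $B_1',\dots,B_k'$ with $R(B_j')=R_j+v$, and nothing prevents $u$ from landing in the component of $B_j'$ rooted at $v$, after which $B_j'+a$ contains a directed cycle. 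I do not see a cheap repair inside that induction; any rerouting of the $v$--$u$ path of $B_j'$ changes its root set and forces you to re-verify arc-disjointness. So the recast version is not optional. That recast is indeed the classical proof, but the arc is added in the wrong direction in your description: it must \emph{enter} the non-$R_j$ root (demoting it), not leave it; equivalently and more cleanly, grow $B_j$ outward from $R_j$, adding $(u,v)$ with $u\in V(B_j)$, $v\notin V(B_j)$, with invariant $d^-_{D-E(B_j)}(X)\ge|\{i\ne j:R_i\cap X=\emptyset\}|+\mathbf{1}[X\cap V(B_j)=\emptyset]$ for all $\emptyset\ne X\subseteq V$. With your shrinking-root-set version the indicator $\mathbf{1}[R(F)\cap X=\emptyset]$ can jump \emph{up} when you demote a root, so you would need slack $2$ on some sets; with the $V(B_j)$-expansion version the indicator only drops, which is what the uncrossing argument actually exploits.

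Second, the cut-preservation lemma you postpone is where all the content of the theorem lives, and the sketch omits the one fact that closes it. A set $X$ is dangerous when it is tight, $d^-_{D-E(B_j)}(X)=|\{i\ne j:R_i\cap X=\emptyset\}|$, meets $V(B_j)$, and is not contained in $V(B_j)$. Two dangerous sets through a common new vertex $v$ uncross to a tight intersection, and the crux is that this intersection must again meet $V(B_j)$: a tight set disjoint from $V(B_j)$ would violate the invariant, which gives slack $1$ there. That is what makes the dangerous sets containing $v$ closed under intersection, hence the minimal such $M$ unique through $v$, hence any in-arc of $v$ from $M\cap V(B_j)$ safe; the existence of such an in-arc then falls out of a degree count comparing $d^-_{D-E(B_j)}(M\setminus V(B_j))$ with $d^-_{D-E(B_j)}(M)$ and using the invariant. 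Until that step is actually written the proposal is a correct plan rather than a proof. Your closing remark about reducing to a common root with a matroidal constraint is accurate and is in effect how the paper itself phrases things downstream (Theorem~\ref{10} is stated as equivalent), but you are right that prescribing the $x$-arcs each arborescence uses makes the reduction an equivalence rather than a shortcut.
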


We note that Theorem \ref{16} is the base of all our results in this paper.  Edmonds~\cite{edmonds} also studied the problem of packing arc-disjoint spanning arborescences.
 
 \begin{thm}[\cite{edmonds}]\label{26}
 	Digraph $D=(V,A)$ has $k$ arc-disjoint spanning arborescences (possibly rooted at different vertices) if and only if for any disjoint subsets $X_{1}, \ldots, X_{t}$ of $V$,
 	\[
 	\sum_{j=1}^{t}d^{-}(X_{j}) \geq k(t-1).
 	\]
 \end{thm}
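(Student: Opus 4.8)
\medskip
\noindent\textbf{Proof proposal.} I would deduce Theorem~\ref{26} from Theorem~\ref{16}. Necessity is a direct counting argument: if $B_{1},\dots,B_{k}$ are arc-disjoint spanning arborescences with $B_{i}$ rooted at $r_{i}$, and $X_{1},\dots,X_{t}$ are pairwise disjoint nonempty subsets of $V$, then for every $i$ and every $j$ with $r_{i}\notin X_{j}$ the path in $B_{i}$ from $r_{i}$ to any vertex of $X_{j}$ contains an arc entering $X_{j}$; as the $X_{j}$ are disjoint, $r_{i}$ lies in at most one of them and an arc can enter at most one of them, so $B_{i}$ uses at least $t-1$ distinct arcs, each entering some $X_{j}$. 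Since the $B_{i}$ are arc-disjoint, the number of arcs of $D$ entering the $X_{j}$'s is at least $k(t-1)$, and by disjointness this number equals $\sum_{j=1}^{t}d^{-}(X_{j})$.

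For sufficiency I would first reduce, by introducing a new vertex, to a vertex-weighting statement. Suppose there are integers $a_{v}\ge 0$ (for $v\in V$) with $\sum_{v\in V}a_{v}=k$ and $a(X):=\sum_{v\in X}a_{v}\ge k-d^{-}(X)$ for every $\emptyset\ne X\subseteq V$. Form $D'$ from $D$ by adding a new vertex $x$ together with $a_{v}$ parallel arcs from $x$ to $v$ for each $v$. Then $d^{+}_{D'}(x)=k$, and for $\emptyset\ne X\subseteq V$ one has $d^{-}_{D'}(X)=d^{-}(X)+a(X)\ge k$, while $d^{-}_{D'}(X)\ge 0$ whenever $x\in X$. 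Hence Theorem~\ref{16}, applied to $D'$ with all $k$ prescribed root sets equal to $\{x\}$, yields $k$ arc-disjoint branchings $B'_{1},\dots,B'_{k}$, each with root set $\{x\}$ --- that is, spanning $x$-arborescences of $D'$. Each $B'_{i}$ uses at least one arc leaving $x$; since there are only $k$ such arcs in $D'$ and the $B'_{i}$ are arc-disjoint, each uses exactly one, so deleting $x$ from $B'_{i}$ leaves a spanning arborescence of $D$, and these $k$ arborescences of $D$ are arc-disjoint. Thus it remains only to produce the numbers $(a_{v})$.

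To produce them, note that $p(X):=k-d^{-}(X)$ is supermodular because $d^{-}$ is submodular, and we are asking for a nonnegative integral modular function $a$ of total weight $k$ with $a(X)\ge p(X)$ on all nonempty $X$. The constraint at $X=V$ already forces $\sum_{v}a_{v}\ge k$ for any such $a$, so it suffices to exhibit a feasible $a$ with $\sum_{v}a_{v}\le k$. By the standard min--max theorem for covering a supermodular function by a modular function (Dilworth truncation / contrapolymatroid; see \cite{Sch}), the minimum of $\sum_{v}a_{v}$ over feasible $a$ equals $\max\sum_{j=1}^{t}p(X_{j})$ over all families of pairwise disjoint nonempty sets $X_{1},\dots,X_{t}$; and $\sum_{j}p(X_{j})=tk-\sum_{j}d^{-}(X_{j})\le tk-k(t-1)=k$ by the hypothesis of the theorem. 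So a feasible $a$ with $\sum_{v}a_{v}=k$ exists, and the reduction of the previous paragraph finishes the proof. The one nontrivial ingredient is this min--max equality; I expect its proof --- an uncrossing argument converting a minimality certificate into a genuine disjoint family, using submodularity of $d^{-}$, with the sets $\emptyset$ and $V$ treated separately (the latter giving the family $\{V\}$, which already attains the value $k$) --- to be the main obstacle, everything else being routine given Theorem~\ref{16}.
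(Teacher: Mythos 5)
The paper does not prove this theorem --- it is cited directly from Edmonds --- so there is no in-paper argument to compare against, but your proposal is a correct derivation from Theorem~\ref{16}. The necessity counting is sound (with ``disjoint'' necessarily meaning ``disjoint and nonempty,'' as you implicitly take it, since otherwise the inequality is false). For sufficiency, your device of adjoining a vertex $x$ with $a_v$ parallel arcs $x\to v$ and applying Theorem~\ref{16} with all root sets equal to $\{x\}$ is exactly the trick the paper itself uses in proving Theorem~\ref{13} and Corollary~\ref{24}; there the multiplicities $a_v$ are pinned down by an explicit formula tailored to the application, whereas here they must come out of a supermodular covering argument, and that is the genuinely new step. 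The reduction is airtight: given $\sum_v a_v=k$ and $a(X)\ge k-d^-(X)$ for nonempty $X\subseteq V$, every cut condition in Theorem~\ref{16} for $D'$ holds, each of the $k$ arc-disjoint spanning $x$-arborescences must use exactly one of the $k$ arcs out of $x$, and deleting $x$ yields the desired arborescences of $D$. You are also right that the Dilworth-truncation min--max for a fully supermodular integer-valued $p$ (here $p=k-d^-$) is the one nontrivial ingredient, that it is proved by precisely the uncrossing you sketch, and that under the theorem's hypothesis its value is exactly $k$ (witnessed by the single set $V$, and bounded above since $\sum_j p(X_j)=tk-\sum_j d^-(X_j)\le k$); invoking it as a known result, with the citation to \cite{Sch}, is a legitimate choice, and with it the proof is complete.
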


Let $F_{1}, \ldots, F_{k}$ be arc-disjoint $x$-arborescences in $D$. 
For $I \subseteq [k]$,  $\overline{I} = [k]\setminus I$.
Denote 
$P_{I}(X)= \{ i \in I : X \cap V(F_{i}) =\emptyset\}$.
\noindent In particular, we write $P(X)$ for $P_{[k]}(X)$ and $P_{i}(X)$ for $P_{\{ i\}}(X)$.
For $u \in V$, define
\[
w_{I}(u) =
   \begin{cases}
   \min \{ |\{ i \in I : u \notin V(F_{i})\}|, ~  [x,u]_{A \setminus \cup_{i=1}^{k}F_{i}}\},& \text{if $u \in N^{+}(x)$}, \\
   0,& \text{if $u \in V \setminus N^{+}(x)$}.
   \end{cases}
\]
For a set function $f: \Omega \rightarrow \mathbb{R}$, define $\widetilde{f}: 2^{\Omega} \rightarrow \mathbb{R}$ as $\widetilde{f}(X)=\sum_{x \in X} f(x)$, where $X \subseteq \Omega$.

Frank \cite{szego} remarked that Theorem \ref{16} is equivalent to the following. 

\begin{thm} [\cite{szego}] \label{10}
	Let $D=(V+x, A)$ be a digraph, $F_{1}, \ldots, F_{k}$ be $k$ arc-disjoint $x$-arborescences. They can be completed to $k$ arc-disjoint spanning $x$-arborescences
	if and only if for any $\emptyset \neq X \subseteq V$,
\begin{equation}\label{11}
d^{-}_{A \setminus \cup_{i=1}^{k}F_{i}} (X)\geq |P(X)|.
\end{equation}
\end{thm}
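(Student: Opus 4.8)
The plan is to derive Theorem~\ref{10} directly from Edmonds' Theorem~\ref{16} by setting up a suitable auxiliary digraph, so that the completion statement becomes a branching-packing statement to which Theorem~\ref{16} applies. First I would delete from $D$ all arcs of $\cup_{i=1}^{k}F_{i}$, obtaining $D' = (V+x, A')$ with $A' = A \setminus \cup_{i=1}^{k}F_{i}$; note every $F_i$ is an $x$-arborescence, so each spans some $V(F_i) \ni x$ and can be extended to a spanning $x$-arborescence $F_i^{*}$ exactly by adjoining a branching $B_i$ of $D'$ on the vertex set $(V \setminus V(F_i)) \cup \{r_i\}$ for a suitably chosen attachment vertex $r_i \in V(F_i)$. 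The key observation is that one does not need to fix $r_i$ in advance: completing $F_i$ to a spanning $x$-arborescence inside $D$ using only arcs of $A'$ is equivalent to finding, in $D'$, a branching $B_i$ whose root set is a subset of $V(F_i)$ and which spans all of $V \setminus V(F_i)$ together with its roots — equivalently, a branching of $D'$ on $V+x$ in which every vertex of $V(F_i)$ is a root and no other vertex is a root is too restrictive; rather, contract each $V(F_i)$ to a single new vertex. So the cleanest route is: for each $i \in [k]$ form $D_i'$ from $D'$ by contracting $V(F_i)$ into a single vertex $z_i$ (keeping all arcs, deleting those that become loops), and observe that $F_i$ completes to a spanning $x$-arborescence in $D$ via $A'$-arcs if and only if $D_i'$ has a spanning arborescence rooted at $z_i$; doing this simultaneously and arc-disjointly for all $i$ is what we must characterize.

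The main step is then to invoke Theorem~\ref{16} in a common ground set. I would take the digraph $D'$ on vertex set $V+x$ and, for each $i \in [k]$, set $R_i = V(F_i)$ as the prescribed root set (these are nonempty since $x \in V(F_i)$). I claim $F_1,\dots,F_k$ can be completed to arc-disjoint spanning $x$-arborescences using $A'$ if and only if $D'$ has arc-disjoint branchings $B_1,\dots,B_k$ with $R(B_i) = R_i = V(F_i)$. For the forward direction, given completions $F_i^{*}$, restrict $F_i^{*}$ to its arcs lying in $A'$: the resulting subdigraph of $D'$ is spanning (its vertex set is $V+x$), is a branching, and its roots are precisely the vertices of $V(F_i)$ (a vertex of $V(F_i)$ has no incoming $A'$-arc in $F_i^{*}$, since in $F_i^{*}$ its unique incoming arc lies in $F_i$ for non-root vertices of $F_i$, and $x$ has none at all; conversely every vertex outside $V(F_i)$ has its unique $F_i^{*}$-incoming arc in $A'$). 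For the reverse direction, given such $B_i$, the digraph $F_i \cup B_i$ is spanning, has in-degree $\le 1$ at every vertex except $x$ (root vertices of $B_i$ are exactly $V(F_i)$, absorbing the in-degree-$0$ condition there), and a counting/acyclicity argument shows it is a spanning $x$-arborescence $F_i^{*}$; arc-disjointness of the $B_i$'s plus arc-disjointness of the $F_i$'s and $A' \cap \cup F_i = \emptyset$ gives arc-disjointness of the $F_i^{*}$'s.

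Finally I would translate Edmonds' condition. By Theorem~\ref{16} applied to $D'$ with root sets $R_i = V(F_i)$, the branchings $B_i$ exist arc-disjointly if and only if for every $\emptyset \neq X \subseteq V+x$ one has $d^{-}_{A'}(X) \ge |\{ i : V(F_i) \cap X = \emptyset\}| = |P(X)|$. Since $x \in V(F_i)$ for all $i$, the sets $X$ containing $x$ give $|P(X)| = 0$ and the inequality is vacuous, so it suffices to range over $\emptyset \neq X \subseteq V$, which is exactly inequality~(\ref{11}). Combining with the equivalence of the previous paragraph completes the proof. The only place requiring genuine care — the ``hard part'' such as it is — is the reverse-direction argument that $F_i \cup B_i$ is a spanning $x$-arborescence rather than merely a spanning subdigraph with the right in-degrees: one must rule out directed cycles, which follows because $F_i$ is acyclic with all arcs reachable from $x$, $B_i$ is a branching (hence acyclic) rooted exactly on $V(F_i)$, and any cycle in the union would have to enter $V(F_i)$ along a $B_i$-arc whose head is a root of $B_i$, contradicting the in-degree-$0$ property of $B_i$-roots; then a vertex count (each of the $|V|$ vertices of $V$ has in-degree exactly $1$) forces connectivity and the unique-path property.
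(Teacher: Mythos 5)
The paper does not actually present a proof of Theorem~\ref{10}; it cites it from Szeg\H{o} and merely records Frank's remark that it is equivalent to Edmonds' Theorem~\ref{16}. Your argument reconstructs the relevant direction of that equivalence (deriving Theorem~\ref{10} from Theorem~\ref{16}), and it is correct: apply Theorem~\ref{16} in $D'=(V+x, A\setminus\cup_i F_i)$ with root sets $R_i = V(F_i)$, observe that sets $X$ containing $x$ make the condition vacuous, and check that $F_i^* \leftrightarrow B_i = F_i^* \setminus F_i$ is a bijection between completions and branchings with $R(B_i)=V(F_i)$.

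Two small remarks on the write-up. First, the contraction construction ($D_i'$ obtained by shrinking $V(F_i)$ to $z_i$) in your opening paragraph is never used; the direct application of Theorem~\ref{16} to a single common digraph $D'$ in your second paragraph is what actually does the work, and you should simply drop the contraction detour. Second, the one genuinely delicate step — that $F_i \cup B_i$ is a spanning $x$-arborescence and not merely a spanning subdigraph with the right in-degrees — is handled correctly: the in-degree count gives each $v \in V$ exactly one incoming arc and $x$ none; any directed cycle would have to contain a transition where an $F_i$-arc leaves a vertex whose cycle-predecessor arc is a $B_i$-arc, forcing that vertex into $V(F_i)=R(B_i)$ and hence to have $B_i$-in-degree $0$, a contradiction; and acyclicity plus the in-degree profile then force the unique-path-from-$x$ property (follow unique incoming arcs backward; the walk cannot repeat a vertex and can only stop at $x$). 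So the proposal is sound and matches the route the paper points to.
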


The following extension is due to Cai \cite{C-6} and Frank \cite{F-13}.

\begin{thm} [\cite{C-6,F-13}] \label{Cai-21}
Let $f: V \rightarrow \mathbb{N}$ and $g: V \rightarrow \mathbb{N}$ be lower and upper bounds for which $f \le g$. A digraph $D =(V,A)$ includes $k$
disjoint spanning arborescences so that each node $v$ is the root of at least $f(v)$ and at most $g(v)$ of these arborescences if
and only if
\begin{itemize}
	\item [(i)] 
$\widetilde{f}(V) \le k$; 
	\item [(ii)]
for any disjoint nonempty subsets $X_{1}, \ldots, X_{t}$ of $V$,
\begin{equation} \label{C-F-1}
\sum^{t}_{j=1}d^{-} (X_{j}) \geq k(t-1) + \widetilde{f}(V \setminus \cup_{j=1}^{t}X_{j});
\end{equation}
	\item [(iii)]
for every subset $ \emptyset \neq X \subseteq V$,
$\widetilde{g}(X) \ge k -  d^{-} (X) $.
\end{itemize}
\end{thm}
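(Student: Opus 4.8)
The plan is to deduce Theorem~\ref{Cai-21} from Theorem~\ref{10} by reducing it to a purely numerical feasibility question about a ``root\nobreakdash-multiplicity'' vector, and then to settle that question by an uncrossing induction. For the reduction, adjoin a new vertex $x\notin V$. For $m:V\to\mathbb{Z}_{\ge 0}$ with $\widetilde{m}(V)=k$, let $D_{m}=(V+x,\,A\cup M_{m})$, where $M_{m}$ consists of $m(v)$ parallel arcs from $x$ to $v$ for each $v\in V$; thus $|M_{m}|=k$. Taking these $k$ arcs one at a time gives $k$ arc-disjoint single-arc $x$-arborescences (``stubs'') $F_{1},\dots,F_{k}$ with $\bigcup_{i}E(F_{i})=M_{m}$, so $(A\cup M_{m})\setminus\bigcup_{i}E(F_{i})=A$; and since a stub avoids a nonempty $X\subseteq V$ precisely when its head lies outside $X$, we get $|P(X)|=k-\widetilde{m}(X)$. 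By Theorem~\ref{10} the stubs extend to $k$ arc-disjoint spanning $x$-arborescences of $D_{m}$ if and only if
\begin{equation}\label{red}
\widetilde{m}(X)\ \ge\ k-d^{-}(X)\qquad\text{for all }\emptyset\neq X\subseteq V .
\end{equation}
Because $|M_{m}|=k$ and each spanning $x$-arborescence, rooted at $x$, uses at least one arc leaving $x$, each uses exactly one; deleting $x$ then turns any extension into $k$ arc-disjoint spanning arborescences of $D$ in which $v$ is the root of exactly $m(v)$ of them, and conversely every such packing of $D$ arises this way. Hence Theorem~\ref{Cai-21} is equivalent to the statement: there is an $m:V\to\mathbb{Z}$ with $f\le m\le g$, $\widetilde{m}(V)=k$, and \eqref{red}.

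\emph{Necessity of (i)--(iii)} then drops out of this reformulation: summing $m\ge f$ over $V$ gives (i); $\widetilde{g}(X)\ge\widetilde{m}(X)\ge k-d^{-}(X)$ gives (iii); and for disjoint nonempty $X_{1},\dots,X_{t}$, summing $d^{-}(X_{j})\ge k-\widetilde{m}(X_{j})$ and using $\widetilde{m}\big(\bigcup_{j}X_{j}\big)=k-\widetilde{m}\big(V\setminus\bigcup_{j}X_{j}\big)\le k-\widetilde{f}\big(V\setminus\bigcup_{j}X_{j}\big)$ gives (ii).

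\emph{Sufficiency.} Assume (i)--(iii); note (iii) at $X=V$ gives $\widetilde{g}(V)\ge k$, so together with (i) there is an integer $m$ with $f\le m\le g$ and $\widetilde{m}(V)=k$. If $\widetilde{f}(V)=k$, then $m:=f$ already satisfies \eqref{red}: applying (ii) to the single set $X$ gives $d^{-}(X)\ge\widetilde{f}(V\setminus X)=k-\widetilde{m}(X)$. Otherwise $\widetilde{f}(V)\le k-1$, and I would induct on $\sum_{v}\big(g(v)-f(v)\big)$. Pick $v_{0}$ with $f(v_{0})<g(v_{0})$ and let $e_{v_{0}}$ be the $\{0,1\}$-valued function supported on $\{v_{0}\}$. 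It suffices to show that one of $(f,\,g-e_{v_{0}})$, $(f+e_{v_{0}},\,g)$ still satisfies (i)--(iii): a valid $m$ for the reduced instance, which exists by the inductive hypothesis, is also valid for $(f,g)$, since \eqref{red} and $\widetilde{m}(V)=k$ do not involve $f,g$. The pair $(f,g-e_{v_{0}})$ can violate only (iii), and only if some $X^{*}\ni v_{0}$ is tight, $\widetilde{g}(X^{*})=k-d^{-}(X^{*})$. The pair $(f+e_{v_{0}},g)$ cannot violate (i) (as $\widetilde{f}(V)\le k-1$), nor (iii); so if it fails, it fails (ii), witnessed by disjoint nonempty sets $Y_{1},\dots,Y_{s}$ with $s\ge 1$, tight in (ii)\nobreakdash-for\nobreakdash-$f$, and with $v_{0}\notin\bigcup_{j}Y_{j}$. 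Uncrossing $X^{*}$ against $\{Y_{j}\}_{j}$, using submodularity of $d^{-}$ and modularity of $\widetilde{f}$ and $\widetilde{g}$, should then produce a family violating (ii) or a set violating (iii) for $(f,g)$ itself --- a contradiction completing the induction.

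The reduction and the necessity half are routine; the delicate step, and the one I expect to be the main obstacle, is this uncrossing. The difficulty is that $X\mapsto k-d^{-}(X)$ is only \emph{intersecting}-supermodular --- well behaved on crossing pairs but not on disjoint ones, where the additive constant $k$ interferes --- so $X^{*}$ and the $Y_{j}$ cannot simply be merged into a laminar family. One must instead cut $X^{*}$ into the pieces $X^{*}\cap Y_{j}$ and $X^{*}\setminus\bigcup_{j}Y_{j}$ (or, dually, use the sets $Y_{j}\setminus X^{*}$ together with $X^{*}$), while carefully tracking the boundary term $\widetilde{f}\big(V\setminus\bigcup_{j}Y_{j}\big)$ in (ii); making this accounting close is where the real work lies. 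Alternatively, one may observe that the admissible $m$ are exactly the integer points of a box intersected with the base polytope of the intersecting-supermodular function $k-d^{-}$ and invoke the known feasibility criterion for such an intersection, but carrying out the uncrossing by hand keeps the argument inside the toolkit built on Theorem~\ref{16}.
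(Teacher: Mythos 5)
The paper itself does not prove Theorem~\ref{Cai-21}; it is quoted from Cai~\cite{C-6} and Frank~\cite{F-13} as background, and the paper in fact closes by asking as an \emph{open} question whether Theorem~\ref{Cai-21} can be derived from its own Theorem~\ref{32}. So there is no in-paper proof to compare against, and your attempt has to stand on its own.

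Your reduction to Theorem~\ref{10} is correct and is the right move: adjoining $x$ with $m(v)$ stub arcs, computing $|P(X)|=k-\widetilde m(X)$ and $d^-_{A\cup M_m\setminus\cup F_i}=d^-_A$, and using $|M_m|=k$ to force each $F^*_i$ to carry exactly one stub, cleanly converts the theorem into: there exists an integer $m$ with $f\le m\le g$, $\widetilde m(V)=k$ and $\widetilde m(X)\ge k-d^-(X)$ for all nonempty $X$. The necessity half and the base case $\widetilde f(V)=k$ are then exactly as you say.

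The gap is the inductive step, and it is a real one --- not a routine computation you have merely deferred. You assume both reduced pairs $(f,g-e_{v_0})$ and $(f+e_{v_0},g)$ fail, obtaining a tight $X^*\ni v_0$ with $\widetilde g(X^*)+d^-(X^*)=k$ and a tight disjoint family $\{Y_j\}_{j=1}^s$ with $v_0\notin\cup_jY_j$ and $\sum_jd^-(Y_j)=k(s-1)+\widetilde f(V\setminus\cup_jY_j)$, and say ``uncrossing $X^*$ against $\{Y_j\}$ should then produce'' a violation of (ii) or (iii). The most natural move you suggest --- adjoining $W:=X^*\setminus\cup_jY_j$ (which is nonempty, as $v_0\in W$) to the family $\{Y_j\}$ and applying (ii) --- does not close: subtracting the tight (ii)-equality for $\{Y_j\}$ from the (ii)-inequality for $\{Y_1,\dots,Y_s,W\}$ yields only $\widetilde f(W)+d^-(W)\ge k$, which is perfectly consistent with all of (i)--(iii). (By contrast, when $X^*$ happens to be disjoint from every $Y_j$ the same manipulation gives $\widetilde f(X^*)+d^-(X^*)\ge k$, hence $\widetilde f(X^*)\ge\widetilde g(X^*)$, contradicting $f(v_0)<g(v_0)$ --- so the entire difficulty is in handling the overlaps $X^*\cap Y_j$, and your plan does not say how.) Closing this requires either a genuinely more careful extremal choice of $X^*$ and $\{Y_j\}$ together with a second round of uncrossing of the sets $X^*\cap Y_j$ against $X^*$, or a direct appeal to the Dilworth-truncation / base-polyhedron feasibility theorem you mention only as a fallback; in the latter case that theorem (with the passage from the intersecting-supermodular $k-d^-$ to its partition-induced fully supermodular upper truncation) needs to be stated and invoked precisely, since it is where the constants in (i) and (ii) --- in particular the $k(t-1)$ term --- actually come from. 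As written, the sufficiency argument is incomplete.
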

Note that the condition $(i)$ $\widetilde{f}(V) \le k$ can be interpreted as the inequality in (\ref{C-F-1}) written for $t =0$.

Our first main result is the following theorem, which also generalizes  Theorem \ref{16}. It is intended to characterize the situation of arborescence augmentation with their root degrees  bounded below.

\begin{thm} \label{1}
For digraph $D =( V +x, A)$ and integer $k >0$,
let $\{ I_{1}, \ldots, I_{l}\}$ be a partition of $[k]$, $c_{1}, \ldots, c_{l}$ be nonnegative integers. Suppose $F_{1}, \ldots, F_{k}$ are  arc-disjoint $x$-arborescences in $D$,
then they can be completed to $k$ arc-disjoint spanning $x$-arborescences $F^{*}_{1}, \ldots, F^{*}_{k}$ such that $\sum_{i \in I_{\alpha}} d^{+}_{F^{*}_{i}} (x) \geq c_{\alpha}$ for $\alpha \in [l]$ if and only if for any disjoint subsets $X_{1}, \ldots, X_{t}$ of $V$ and any subset $I$ that is the union of some of $I_{1}, \ldots, I_{l}$,
\begin{equation} \label{4}
 \sum^{t}_{j=1}d^{-}_{A \setminus \cup_{i=1}^{k}F_{i}} (X_{j})\geq  \sum^{t}_{j=1} |P_{I}(X_{j})| +
\sum_{ I_{\alpha} \subseteq  \overline{I}}(c_{\alpha}-\sum_{i \in I_{\alpha}}d^{+}_{F_{i}}(x))- \widetilde{w}_{\overline{I}}(V \setminus \cup_{j=1}^{t}X_{j}).
\end{equation}
In particular, when $t=0$, (\ref{4}) becomes 
\begin{equation} \label{27}
\widetilde{w}_{\overline{I}}(V) \geq \sum_{ I_{\alpha} \subseteq  \overline{I}}(c_{\alpha}-\sum_{i \in I_{\alpha}}d^{+}_{F_{i}}(x)).
\end{equation}
\end{thm}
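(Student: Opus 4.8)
The plan is to derive Theorem~\ref{1} from Edmonds' theorem in its augmentation form (Theorem~\ref{10}) by a reduction that encodes the lower-bound constraints $\sum_{i\in I_\alpha} d^+_{F^*_i}(x)\ge c_\alpha$ into the arborescence structure itself. The necessity direction should be routine: given the completed spanning arborescences $F^*_1,\ldots,F^*_k$, for any choice of disjoint $X_1,\ldots,X_t$ and any $I$ that is a union of blocks $I_\alpha$, one counts the arcs of $\bigcup_i F^*_i\setminus\bigcup_i F_i$ entering each $X_j$. Each $i\in P_I(X_j)$ forces at least one such arc into $X_j$ coming from outside (since $F^*_i$ must reach $X_j$ but $F_i$ does not touch it), and the arcs leaving $x$ in the blocks $I_\alpha\subseteq\overline I$ that are "new" (beyond the $d^+_{F_i}(x)$ already present) number at least $c_\alpha-\sum_{i\in I_\alpha}d^+_{F_i}(x)$ in total; the term $\widetilde w_{\overline I}(V\setminus\bigcup_j X_j)$ accounts for the maximum number of these new root-arcs that can be absorbed by vertices outside all the $X_j$'s, i.e.\ that need not be counted among the $d^-(X_j)$. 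Assembling these observations and using arc-disjointness gives~(\ref{4}); the $t=0$ case~(\ref{27}) just says there is enough room outside to place all required new root-arcs.

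For sufficiency, the main idea is to build an auxiliary digraph $D'$ in which each block $I_\alpha$ gets a gadget that manufactures exactly $c_\alpha-\sum_{i\in I_\alpha}d^+_{F_i}(x)$ "forced" root-arcs, and then invoke Theorem~\ref{10} (or rather its proof technique) on $D'$. Concretely, I would split $x$ or add auxiliary vertices so that the bound $\sum_{i\in I_\alpha}d^+_{F^*_i}(x)\ge c_\alpha$ becomes a connectivity requirement, choose the $c_\alpha$ additional out-arcs of $x$ greedily among candidates counted by $w_{\overline I}$, and then complete the arborescences using Theorem~\ref{10}. The function $w_I(u)$ is precisely designed to count, for each potential new root $u\in N^+(x)$, how many of the arborescences in $I$ can legitimately take $u$ as a fresh root via an unused arc $xu$; the min with $[x,u]_{A\setminus\bigcup F_i}$ reflects that we only have that many spare arcs. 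I expect the cleanest route is an uncrossing argument: assume the condition~(\ref{4}) holds but no valid completion exists, take a maximal partial completion, and analyze a "tight" set $X$ (or tight family $X_1,\ldots,X_t$) that blocks further augmentation; submodularity of $d^-$ together with the supermodular/modular behaviour of $|P_I(\cdot)|$ and $\widetilde w_{\overline I}(V\setminus\cdot)$ lets one uncross tight sets and reach a contradiction with~(\ref{4}).

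The hard part will be handling the interaction between the choice of which block $I_\alpha$ a given new root-arc $xu$ is assigned to and the connectivity (covering) requirement that the remaining arcs must still satisfy Theorem~\ref{10} after that assignment. In other words, one must show that the "budget" freedom encoded by ranging $I$ over all unions of blocks in~(\ref{4}) is exactly enough to guarantee a consistent simultaneous choice: a local greedy assignment of root-arcs may block some block's lower bound while over-satisfying another's, so the proof needs a global exchange/augmenting-path argument (a matroid-intersection or flow feasibility statement, with the right-hand side of~(\ref{4}) playing the role of a supermodular requirement function) to reshuffle root-arcs among blocks. I would isolate this as a lemma: the polyhedron of feasible "root-arc distributions" $(y_{\alpha u})$ is nonempty precisely under~(\ref{27})-type inequalities for all unions of blocks, proved by a Hall/deficiency argument, and then feed any vertex of that polytope into Theorem~\ref{10} applied to the residual digraph. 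Verifying that~(\ref{4}) for the residual digraph follows from~(\ref{4}) for $D$ — i.e.\ that subtracting a chosen root-arc $xu$ does not violate the Edmonds inequality~(\ref{11}) — is the bookkeeping core, and is where the term $\widetilde w_{\overline I}(V\setminus\bigcup_j X_j)$ must be tracked carefully.
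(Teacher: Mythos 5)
Your necessity sketch is essentially the paper's necessity argument and is fine in spirit. The sufficiency direction, however, has a genuine gap where you yourself flag the "hard part."

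The two-phase plan — first solve a root-arc-distribution polytope (a Hall/deficiency feasibility question built from (\ref{27})-type inequalities), then plug \emph{any} vertex of that polytope into Theorem~\ref{10} on the residual digraph — does not go through, because root-choice and connectivity cannot be decoupled. It is not true that every budget-feasible assignment of new root-arcs leaves a residual satisfying (\ref{11}): two distinct vertices may both be eligible as new roots for some block, yet only one of them is reachable into a tight set $X$, so the augmentation must pick that one specifically. The $I$-parameterized family (\ref{4}) encodes exactly this coupled feasibility, and once you fix a root distribution and pass to a residual, you cannot recover the needed inequalities from (\ref{27}) alone. This is the precise point the paper has to work hard for: it augments one arc at a time and, via Claim~\ref{9}, tracks how the deficiency function $F(X_1,\ldots,X_t;I)$ changes for \emph{every} union $I$ of blocks — not just the single Edmonds inequality (\ref{11}).

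There is a second, subtler obstruction your proposal does not handle. Even if one commits to a one-arc-at-a-time augmentation, a greedy choice of a feasible arc for one needy block can leave another block without any feasible arc, despite (\ref{4}) holding for the original instance. The paper's resolution is the induction on the number $\tau$ of unsatisfied blocks: when $\tau\ge 2$ it merges two needy blocks $I_1\cup I_2$, applies the induction hypothesis to obtain a completion for the coarsened partition, then argues (Cases 1 and 2) that one specific arc from that completion can be transplanted back while maintaining (\ref{4}) for the original finer partition. Nothing in your sketch (gadget digraph, uncrossing of tight sets, matroid-intersection/flow reformulation) supplies this block-merging mechanism or an argument serving the same purpose, and the gadget construction you allude to is never specified. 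As written, the proposal is a research plan identifying the right difficulty, not a proof of sufficiency.
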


Note that Theorem~\ref{10}  is a special case of Theorem~\ref{1}. 
To see this, let  $I_{i} = \{i\}$ ($1 \le i \le k$), $l=k$, and $c_{1}=\ldots=c_{k}=0$, then (\ref{11}) implies that for any disjoint subsets $X_{1}, \ldots, X_{t}$ of $V$, $I \subseteq [k]$,
$d^{-}_{A \setminus \cup_{i=1}^{k}F_{i}} (X_{j}) \geq |P(X_{j})| \geq |P_{I}(X_{j})|$, and thus (\ref{4}) holds.

Frank~\cite{frank} obtained the following theorem as a corollary of Theorems~\ref{16} and~\ref{10}.
\begin{thm}[\cite{frank}] \label{12} 
	A digraph $D$ can be decomposed into $k$ branchings if and only if the maximum in-degree of $D$ is at most $k$ and its underlying undirected graph can be decomposed into $k$ forests. 
\end{thm}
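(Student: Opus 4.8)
The plan is to read off necessity directly from the definitions and to prove sufficiency by a standard root-addition reduction to Theorem~\ref{10} (which is equivalent to Theorem~\ref{16}). For necessity, suppose $D$ decomposes into branchings $B_1, \ldots, B_k$. In a branching every vertex has in-degree at most $1$, so $d_D^-(v) = \sum_{i=1}^{k} d_{B_i}^-(v) \le k$ for every $v \in V(D)$, which gives the in-degree bound; and since the underlying undirected graph of a branching is a forest, the underlying graphs of $B_1, \ldots, B_k$ decompose the underlying graph of $D$ into $k$ forests.

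For sufficiency, I would first build an auxiliary digraph. Put $V := V(D)$ and let $D' = (V+x, A')$ be obtained from $D$ by adjoining a new vertex $x$ and adding, for each $v \in V$, exactly $k - d_D^-(v)$ parallel arcs from $x$ to $v$; this is legitimate because $d_D^-(v) \le k$, and it arranges that $d_{D'}^-(v) = k$ for every $v \in V$ while $d_{D'}^-(x) = 0$. I would then apply Theorem~\ref{10} to $D'$, taking all $k$ input arborescences $F_1, \ldots, F_k$ to be the trivial one-vertex arborescence on $\{x\}$ with no arcs. Since $V(F_i) = \{x\}$, for every $\emptyset \neq X \subseteq V$ we get $P(X) = [k]$ and $A' \setminus \cup_{i=1}^{k} F_i = A'$, so condition~(\ref{11}) reduces to requiring $d_{D'}^-(X) \ge k$ for all such $X$. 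To verify this, write $d_{D'}^-(X) = \sum_{v \in X} d_{D'}^-(v) - [X,X]_{D'} = k|X| - [X,X]_{D}$ (no new arc has both ends in $X$), and invoke the hypothesis: restricting each of the $k$ forests that cover the underlying graph of $D$ to $X$ leaves at most $|X|-1$ edges apiece, so $[X,X]_{D} \le k(|X|-1)$ and hence $d_{D'}^-(X) \ge k$. Theorem~\ref{10} thus yields $k$ arc-disjoint spanning $x$-arborescences $F_1^*, \ldots, F_k^*$ of $D'$.

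To finish, I would pull these back to $D$. A count shows the $F_i^*$ actually partition $A'$: each spanning $x$-arborescence of $D'$ has $|V|$ arcs, so together they use $k|V|$ arcs, while $|A'| = |E(D)| + \sum_{v \in V}(k - d_D^-(v)) = k|V|$. Deleting $x$ from each $F_i^*$ leaves a spanning subdigraph $B_i$ of $D$ whose underlying graph is a forest and in which every vertex has in-degree at most $1$, i.e.\ a branching of $D$; the $B_i$ are arc-disjoint and their union is $A' \setminus E_{A'}^{+}(x) = E(D)$, so $B_1, \ldots, B_k$ decompose $D$.

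The only genuinely non-routine step is spotting the auxiliary digraph $D'$ --- equivalently, recognizing that decomposing $D$ into $k$ branchings is the same as packing $k$ spanning arborescences from a single added root; once $D'$ is in hand, checking~(\ref{11}) is a one-line consequence of the forest hypothesis and the arc count is immediate. A minor point worth stating explicitly is that the ``$k$ forests'' hypothesis already caps the multiplicity of every arc of $D$ at $k$, which is exactly what keeps the estimate $[X,X]_{D} \le k(|X|-1)$ valid in the presence of multiple arcs.
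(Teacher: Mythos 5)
Your proposal is correct and takes essentially the approach the paper points to: the paper states that Frank derived Theorem~\ref{12} as a corollary of Theorems~\ref{16} and~\ref{10}, and your root-addition construction $D'$ followed by an application of Theorem~\ref{10} is precisely that reduction. It also coincides with the auxiliary-digraph construction the paper itself uses in the proof of Theorem~\ref{13} (the strengthened version): adding $x$, saturating every in-degree to $k$, checking $d_{D'}^-(X)\ge k$ via the forest bound, and concluding with an arc-count that the spanning arborescences partition $A'$.
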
 

As an  application of Theorem~\ref{1}, we deduce the following corollary  which was available in the book of Schrijver as \cite[Theorem 53.3]{Sch}, and  is a strengthened version of Theorem~\ref{12}.  
Indeed, Corollary~\ref{14} will be deduced from Theorem \ref{13}, which is a characterization for branching covering with their root degrees bounded below.  

\begin{cor}\label{14} 
	Let $D = (V,A)$ be a digraph. If $A$ can be covered by $k$ branchings, then $A$ can be covered by $k$ branchings each of size $\lfloor |A|/k \rfloor$ or
	$\lceil |A|/k\rceil $. 
\end{cor}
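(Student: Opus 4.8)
The plan is to derive Corollary~\ref{14} from a ``root-degree bounded below'' version of the branching-covering theorem, in the same spirit that Frank derived Theorem~\ref{12} from Theorems~\ref{16} and~\ref{10}. The standard reduction from covering by $k$ branchings to packing $k$ spanning arborescences goes as follows: given $D=(V,A)$ that can be covered by $k$ branchings, add a new vertex $x$ together with, for each $v\in V$, exactly $k-d_A^-(v)$ parallel arcs from $x$ to $v$; call the resulting digraph $D'=(V+x,A')$. A covering of $A$ by $k$ branchings corresponds exactly to a decomposition of $A'$ into $k$ arc-disjoint spanning $x$-arborescences: each vertex $v\ne x$ has in-degree exactly $k$ in $D'$, so in any such decomposition $v$ receives exactly one in-arc from each arborescence, and the arcs of $A'\setminus A$ (the arcs leaving $x$) are precisely the ``missing'' in-arcs; conversely, deleting $x$ from $k$ arc-disjoint spanning $x$-arborescences yields $k$ arc-disjoint branchings covering $A$, where the size of the $i$-th branching is $|A|$ minus... more precisely, $|V(D')|-1$ minus $d^+_{F_i^*}(x)$, so controlling $d^+_{F_i^*}(x)$ controls the sizes of the branchings.

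The key observation is that the size of the branching $B_i$ obtained by removing $x$ from the spanning $x$-arborescence $F_i^*$ equals $|V|-d^+_{F_i^*}(x)$, since $F_i^*$ has $|V|$ arcs (it spans $V+x$) and $d^+_{F_i^*}(x)$ of them leave $x$. Hence $\sum_i |B_i| = k|V| - \sum_i d^+_{F_i^*}(x) = k|V|-|A'\setminus A|$; but $|A'\setminus A| = \sum_{v}(k-d_A^-(v)) = k|V|-|A|$, so $\sum_i|B_i|=|A|$ as it must. To force every $|B_i|$ to be $\lfloor|A|/k\rfloor$ or $\lceil|A|/k\rceil$, it suffices to force every $d^+_{F_i^*}(x)$ to lie within one of the ``perfectly balanced'' value, i.e.\ to be $\lfloor|A'\setminus A|/k\rfloor$ or $\lceil|A'\setminus A|/k\rceil$. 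This is exactly a lower-bound-plus-upper-bound constraint on the individual root degrees $d^+_{F_i^*}(x)$, so what is needed is Theorem~\ref{13} (the promised branching-covering analogue with root degrees bounded below) — or, more directly, the part of the machinery that handles each singleton $I_i=\{i\}$ with its own lower bound $c_i=\lfloor |A'\setminus A|/k\rfloor$; the matching upper bound then comes for free from the pigeonhole identity $\sum_i d^+_{F_i^*}(x) = |A'\setminus A|$, since if one root degree exceeded $\lceil|A'\setminus A|/k\rceil$ another would fall below $\lfloor|A'\setminus A|/k\rfloor$, contradicting its lower bound.

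Concretely, I would first prove Theorem~\ref{13} by applying Theorem~\ref{1} (with the partition into singletons and a suitable choice of starting arborescences, or with the empty starting arborescences $F_i=\{x\}$ so that $d^+_{F_i}(x)=0$) and then translating the covering inequality (\ref{4}) into the degree-condition form stated in Theorem~\ref{13} via the $D\mapsto D'$ reduction above; here one must check that the feasibility condition (\ref{4}) for $D'$ is implied by the hypothesis ``$A$ can be covered by $k$ branchings'' together with the specific bounds $c_i$. Then, for Corollary~\ref{14}, I would set $f\equiv\lfloor|A'\setminus A|/k\rfloor$ as the lower bound on root degrees, verify that $\widetilde f(V+x)\le$ (the relevant total) and the other conditions of Theorem~\ref{13} hold — using precisely that $A$ is coverable by $k$ branchings to get the uncapped version, and noting that going from a feasible uncapped solution to one respecting a uniform lower bound $\lfloor\cdot/k\rfloor$ is always possible by the theorem — and finally invoke the pigeonhole argument to upgrade the lower bound on each $d^+_{F_i^*}(x)$ to the two-sided bound, hence to the stated bound on $|B_i|$.

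The main obstacle I anticipate is verifying that the covering inequality (\ref{4}) (equivalently the conditions in Theorem~\ref{13}) actually holds for $D'$ under the sole hypothesis that $A$ is coverable by $k$ branchings and with the uniform lower bounds $c_i=\lfloor|A'\setminus A|/k\rfloor$. One direction — that coverability gives the $c_i=0$ version — is immediate from Theorem~\ref{12} (or Theorem~\ref{16}). The delicate point is that imposing a positive uniform lower bound does not destroy feasibility: this is where the structure of $w_{\overline I}$ and the term $\sum_{I_\alpha\subseteq\overline I}(c_\alpha-\sum_{i\in I_\alpha}d^+_{F_i}(x))$ in (\ref{4}) must be controlled. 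I expect this to reduce to a counting estimate showing that for any family of disjoint sets $X_j$, the slack $\sum_j d^-_{A'\setminus\cup F_i}(X_j)-\sum_j|P_I(X_j)|$ in the uncapped inequality is at least $\sum_{I_\alpha\subseteq\overline I}c_\alpha - \widetilde w_{\overline I}(V\setminus\cup_j X_j)$ — an estimate that should follow from the fact that $D'$ has all in-degrees exactly $k$, making the uncapped inequality hold with substantial room, plus the smallness of the uniform bound $\lfloor|A'\setminus A|/k\rfloor$. Once that estimate is in place, the rest is the bookkeeping reduction and the pigeonhole step, both routine.
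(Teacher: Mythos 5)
Your proposal takes a genuinely different route from the paper's proof: the paper first obtains an arbitrary decomposition of $A$ into $k$ branchings via Theorem~\ref{12}, and then repeatedly applies Theorem~\ref{13} with $k=2$ to the union $F_i\cup F_j$ of a most-unequal pair to rebalance them, iterating until all root-set sizes differ by at most one. You instead attempt a one-shot construction in the auxiliary digraph $D'$ obtained by adding $x$ with $k-d^-_A(v)$ parallel arcs to each $v$, imposing a uniform lower bound on every $d^+_{F^*_i}(x)$ and relying on a global pigeonhole step for the matching upper bound.

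That pigeonhole step is false. Write $m = |A'\setminus A| = k|V|-|A|$, $q=\lfloor m/k\rfloor$, and $r=m-kq\in\{0,\dots,k-1\}$. You impose $d^+_{F^*_i}(x)\ge q$ for all $i$ and use $\sum_i d^+_{F^*_i}(x)=m$. But this does \emph{not} force $d^+_{F^*_i}(x)\le\lceil m/k\rceil$: for $k=3$ and $m=8$ (so $q=2$, $\lceil m/k\rceil =3$) the tuple $(2,2,4)$ meets the lower bounds and sums to $8$ yet has an entry equal to $4>3$; in general one coordinate can be as large as $q+r$, which exceeds $\lceil m/k\rceil$ whenever $r\ge 2$. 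The reason the paper's version of this argument is sound is that it only ever applies the pigeonhole to two branchings at a time, and for two numbers with fixed sum $s$ a common lower bound of $\lfloor s/2\rfloor$ does force a common upper bound of $\lceil s/2\rceil$, so a pair with $|c_i-c_j|>1$ can always be repaired locally. To save your one-shot route you would have to either impose non-uniform lower bounds $c_i\in\{q,q+1\}$ with exactly $r$ of them equal to $q+1$ (so that the lower bounds sum to exactly $m$ and every inequality is forced to be tight by the identity $\sum_i d^+_{F^*_i}(x)=m$), which requires checking (\ref{4}) for that non-uniform choice, or invoke the two-sided Theorem~\ref{32} and verify its upper-bound condition (\ref{22}) in $D'$ as well; your sketch does neither, and the feasibility verification you flag as ``routine'' is precisely where the balance of Theorem~\ref{13} or Theorem~\ref{32} has to be checked.
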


Our second main result characterizes arborescence augmentation with their root degrees   bounded above, which in some sense is the dual of Theorem~\ref{1}.

\begin{thm} \label{31}
Let $D =( V +x, A)$ be a digraph, $k>0$ be an integer. Let $\{ I_{1}, \ldots, I_{l}\}$ be a partition of $[k]$ and $c'_{1}, \ldots, c'_{l}$ be nonnegative integers. Let $F_{1}, \ldots, F_{k}$ be arc-disjoint $x$-arborescences in $D$ such that $ \sum_{i \in I_{\alpha}}d^{+}_{F_{i}}(x) \leq c'_{\alpha} $ for $1 \leq \alpha \leq l$. Then they can be completed to $k$ arc-disjoint spanning $x$-arborescences $F^{*}_{1}, \ldots, F^{*}_{k}$  
such that $\sum_{i \in I_{\alpha}}d^{+}_{F^{*}_{i}}(x) \leq c'_{\alpha}$ for $1 \leq \alpha \leq l$ if and only if the following two conditions hold: 
\begin{itemize}
	\item [(i)] 
for any nonempty $X \subseteq V$, (\ref{11}) holds. 
\item[(ii)] 
for any disjoint $X_{1}, \ldots, X_{t} \subseteq V$ and $I \subseteq [k]$ that is the union   of some of $I_{1}, \ldots, I_{l}$,
\begin{equation} \label{22}
\sum_{j=1}^{t}(|P_{I}(X_{j})|-d_{ A \setminus (\cup_{i=1}^{k}F_{i} \cup E^{+}(x)) }^{-}(X_{j}))\leq \sum_{I_{\alpha} \subseteq I}(c'_{\alpha}-\sum_{i \in I_{\alpha}}d^{+}_{F_{i}}(x)).
\end{equation}
\end{itemize}
\end{thm}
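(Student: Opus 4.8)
\medskip
\noindent\textbf{Plan of proof.}
The plan is to recast the statement as a packing of prescribed-root branchings with a degree budget at $x$ in the residual digraph, to dispatch necessity by a direct arc count, and to obtain sufficiency by the submodular-function machinery that underlies Theorem~\ref{1}, with the supermodular covering inequality there replaced by its packing (submodular) counterpart here. Write $D'=A\setminus\cup_{i=1}^{k}F_{i}$ for the residual digraph and $D'_{0}=D'\setminus E^{+}(x)$. Exactly as Theorem~\ref{10} is read off from Theorem~\ref{16}, completing $F_{1},\ldots,F_{k}$ to arc-disjoint spanning $x$-arborescences $F^{*}_{1},\ldots,F^{*}_{k}$ with $\sum_{i\in I_{\alpha}}d^{+}_{F^{*}_{i}}(x)\le c'_{\alpha}$ for all $\alpha$ is equivalent to choosing arc-disjoint branchings $B_{1},\ldots,B_{k}$ in $D'$ with $R(B_{i})=V(F_{i})$ and $\sum_{i\in I_{\alpha}}d^{+}_{B_{i}}(x)\le c''_{\alpha}$, where $B_{i}=F^{*}_{i}\setminus F_{i}$, $c''_{\alpha}:=c'_{\alpha}-\sum_{i\in I_{\alpha}}d^{+}_{F_{i}}(x)\ge 0$, and $d^{+}_{F^{*}_{i}}(x)=d^{+}_{F_{i}}(x)+d^{+}_{B_{i}}(x)$. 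With the budget dropped, the existence of such $B_{i}$ is characterized precisely by inequality~(\ref{11}), i.e.\ condition~(i); so (i) is the ``completable at all'' part, and (ii) must carry the budget.

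\emph{Necessity.} Condition~(i) holds since the branchings $F^{*}_{i}\setminus F_{i}$ of any valid completion witness~(\ref{11}). For~(ii), fix disjoint nonempty $X_{1},\ldots,X_{t}\subseteq V$ and a set $I$ that is a union of some of $I_{1},\ldots,I_{l}$. For each $j$ and each $i\in P_{I}(X_{j})$ (i.e.\ $i\in I$ with $X_{j}\cap V(F_{i})=\emptyset$) the spanning arborescence $F^{*}_{i}$ has an arc entering $X_{j}$, and since $F_{i}$ misses $X_{j}$ this arc lies in $F^{*}_{i}\setminus F_{i}\subseteq D'$; these $\sum_{j}|P_{I}(X_{j})|$ arcs are pairwise distinct, by arc-disjointness of the $F^{*}_{i}$ and disjointness of the $X_{j}$. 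Those avoiding $E^{+}(x)$ lie in $D'_{0}$ and enter the $X_{j}$, so there are at most $\sum_{j}d^{-}_{D'_{0}}(X_{j})$ of them; for fixed $i\in I$ those lying in $E^{+}(x)$ all belong to $E^{+}(x)\cap(F^{*}_{i}\setminus F_{i})$, hence number at most $d^{+}_{F^{*}_{i}}(x)-d^{+}_{F_{i}}(x)$, and summing over $i\in I$ this is at most $\sum_{I_{\alpha}\subseteq I}(c'_{\alpha}-\sum_{i\in I_{\alpha}}d^{+}_{F_{i}}(x))$. Adding the two bounds and rearranging gives~(\ref{22}).

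\emph{Sufficiency (the main step).} Assume (i) and (ii). I would obtain the branchings $B_{i}$ of the reduction by running the submodular proof of Theorem~\ref{1} with all inequalities reversed: where that proof pushes the out-degree of $x$ on a class \emph{up} against a supermodular requirement regulated by $\widetilde{w}_{\overline{I}}$, one now holds it \emph{down} against the submodular capacity of $D'_{0}$ for delivering into the sets $X_{j}$, which is exactly the quantity on the left of~(\ref{22}). An equivalent hands-on route is by augmentation: take a completion supplied by~(i) via Theorem~\ref{10}; as long as some class is over budget, reroute one of its arborescences off an arc leaving $x$ by a standard arborescence exchange, and repeat until every budget holds. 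In either route the crux is to show the procedure cannot stop while some budget is still violated; one uncrosses the tight sets of the submodular function $X\mapsto d^{-}_{D'_{0}}(X)-|P_{I}(X)|$ (here $|P_{I}(\cdot)|$ is supermodular), uses that it suffices to let $I$ range over unions of parts, and reads off from a stopped configuration a system $(X_{1},\ldots,X_{t};I)$ violating~(\ref{22}); configurations not touching arcs out of $x$ are ruled out by~(\ref{11}).

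\emph{Main obstacle.} All the weight is on sufficiency. A degree \emph{upper} bound concentrated at the single vertex $x$ does not appear reducible to plain Edmonds' theorem --- any hub inserted to meter the arcs leaving $x$ lets an arborescence branch past it for free --- so the bespoke submodular/exchange argument is needed, and its technical heart is the normal-form reduction: showing that a stopped, over-budget configuration always produces a set system of the shape appearing in~(\ref{22}). A secondary point is that, in contrast to Theorem~\ref{1}, conditions (i) and (ii) are needed \emph{jointly}: once some $c''_{\alpha}>0$, (ii) alone no longer forces completability, so the two families must be shown sufficient together, the $t=0$ instance of~(\ref{22}) --- an empty inequality, valid because $c''_{\alpha}\ge 0$ --- being merely the trivial bottom of the budget.
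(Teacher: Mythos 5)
Your necessity argument is correct and is essentially the paper's inequality chain~(\ref{19}) presented as a direct arc count: the arcs of $F^{*}_{i}\setminus F_{i}$ entering the $X_{j}$ for $i\in P_{I}(X_{j})$ are pairwise distinct, and splitting them by whether they lie in $E^{+}(x)$ gives the two terms of~(\ref{22}). That step is fine.

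The sufficiency, however, is not proved; it is only a programme. You offer two routes --- ``run the Theorem~\ref{1} argument with inequalities reversed'' and ``take any completion from Theorem~\ref{10} and reroute arborescences off arcs leaving $x$ until all budgets hold'' --- and then state that the crux is to show the process cannot halt over budget, to be obtained ``by uncrossing tight sets and reading off a violator of~(\ref{22}).'' That crux is the entire content of the theorem and you do not carry it out. Two specific gaps: (a) Reversing Theorem~\ref{1} is not a formal operation and does not produce a valid argument; the paper's proof of Theorem~\ref{31} is materially harder than that of Theorem~\ref{1} and requires a separate machinery (Section~3's properly intersecting elimination operations, and the tight-family lattices $\mathcal{E}^{1}_{I}$ and $\mathcal{E}^{2}$ with Claims~\ref{48}--\ref{35}): in particular one must uncross while preserving the positivity of $|P_{I}(\cdot)|-d^{-}_{D'_{0}}(\cdot)$ on every member, which forces the use of Type~2 and Type~3 eliminations and nontrivial book-keeping of maximal elements --- a wrinkle that ``uncross the tight sets'' does not address. (b) Your hands-on route moves in the opposite direction from the paper's: the paper grows the $F_{i}$'s by adding arcs of $E^{+}(x)$ (or lowers a budget $c'_{\alpha}$) while maintaining both~(\ref{11}) and~(\ref{22}) invariantly, inducting on the number $\lambda$ of slack classes, and only at the very end invokes Edmonds; whereas you propose to first complete arbitrarily and then reroute, which leaves open both why a budget-violating completion can always be locally improved and why~(\ref{11}) survives the exchange. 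Without a proof of the termination/feasibility lemma, the sufficiency remains a gap rather than an argument.

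Your secondary observation --- that, unlike the one-sided Theorem~\ref{1}, conditions~(i) and~(ii) must be used jointly, with~(i) carrying basic completability and~(ii) metering the budget --- is accurate and matches the role~(\ref{11}) plays throughout the paper's sufficiency proof.
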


As  an  application of Theorem~\ref{31},  we deduce the following corollary,  which is due to B\'{e}rczi and Frank \cite[Theorem 23]{berczi1}, and is a generalization of Theorem \ref{16}.  
For $z \in \mathbb{Z}$, denote $z^{+}:= \max\{z, 0 \}$.    

\begin{cor} \label{BF}  (\cite{berczi1})
	Let $D$ be a digraph and $c_{1}, \ldots, c_{k}$ be positive integers. Then there exist $k$ arc-disjoint branchings $B_{1}, \ldots, B_{k}$ with $|R(B_{i})|=c_{i}$ in $D$ if and only if for any disjoint subsets $X_{1}, \ldots, X_{t}$ of $V(D)$,
	\[
	\sum_{j=1}^{t}d_D^{-}(X_{j}) \geq \sum_{i=1}^{k}(t -c_{i})^{+}. 
	\]
\end{cor}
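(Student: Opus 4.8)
\noindent\emph{Proof plan.} The plan is to reduce the existence of $B_1,\dots,B_k$ to an arborescence-augmentation problem in an auxiliary digraph and then apply Theorem~\ref{31} with the finest partition of $[k]$. Write $V=V(D)$, $A=E(D)$; we may clearly assume $1\le c_i\le|V|$ for every $i$, since otherwise $D$ has no branching with exactly $c_i$ roots. Form $\widehat D=(V+x,\widehat A)$ by adjoining a new vertex $x$ and, for every $v\in V$, $k$ parallel arcs from $x$ to $v$; then $E^{+}_{\widehat A}(x)$ is exactly the set of added arcs, so $\widehat A\setminus E^{+}_{\widehat A}(x)=A$. For $i\in[k]$ let $F_i$ be the trivial $x$-arborescence on the single vertex $x$, so $V(F_i)=\{x\}$ and $d^{+}_{F_i}(x)=0$. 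In Theorem~\ref{31} I would take $l=k$, $I_\alpha=\{\alpha\}$ and $c'_\alpha=c_\alpha$, so that the hypothesis $\sum_{i\in I_\alpha}d^{+}_{F_i}(x)=0\le c'_\alpha$ holds trivially and the sets $I$ appearing in (ii) run over all subsets of $[k]$.

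The second step is to record the dictionary between branchings of $D$ and completions in $\widehat D$. Given arc-disjoint branchings $B_1,\dots,B_k$ of $D$ with $|R(B_i)|\le c_i$, put $F_i^{*}:=B_i\cup\{x\}\cup\{(x,r):r\in R(B_i)\}$, using for each $v$ distinct parallel copies of $(x,v)$ for the at most $k$ indices $i$ with $v\in R(B_i)$; each $F_i^{*}$ is then a spanning $x$-arborescence of $\widehat D$ completing $F_i$, with $d^{+}_{F_i^{*}}(x)=|R(B_i)|\le c_i$, and the family stays arc-disjoint. Conversely, from arc-disjoint spanning $x$-arborescences $F_1^{*},\dots,F_k^{*}$ of $\widehat D$ completing the $F_i$ with $d^{+}_{F_i^{*}}(x)\le c_i$, deleting $x$ yields arc-disjoint branchings $B_i:=F_i^{*}-x\subseteq A$ of $D$ with $|R(B_i)|=d^{+}_{F_i^{*}}(x)\le c_i$ (removing the root $x$ from the tree $F_i^{*}$ splits it into $d^{+}_{F_i^{*}}(x)$ subtrees, each an arborescence rooted at a former child of $x$). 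Finally one upgrades the condition $|R(B_i)|\le c_i$ to $|R(B_i)|=c_i$: a branching with $r<c_i$ components uses $|V|-r\ge c_i-r$ arcs, and deleting one arc increases the number of components by exactly $1$, so any such family can be converted into one with $|R(B_i)|=c_i$ for all $i$. Hence the existence of $B_1,\dots,B_k$ with $|R(B_i)|=c_i$ is equivalent to the existence of a completion of $F_1,\dots,F_k$ to arc-disjoint spanning $x$-arborescences $F_i^{*}$ of $\widehat D$ with $d^{+}_{F_i^{*}}(x)\le c_i$.

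It then remains to invoke Theorem~\ref{31} for $\widehat D$ and the trivial $F_i$ and to unpack its two conditions. Condition (i) (that is, (\ref{11})) is automatic here: $V(F_i)=\{x\}$ gives $|P(X)|=k$ for every $\emptyset\neq X\subseteq V$, while $d^{-}_{\widehat A}(X)\ge[x,X]_{\widehat A}=k|X|\ge k$. For condition (ii) one computes, using $V(F_i)=\{x\}$, $\widehat A\setminus(\cup_iF_i\cup E^{+}(x))=A$, and $d^{+}_{F_i}(x)=0$, that $|P_I(X_j)|=|I|$, $d^{-}_{\widehat A\setminus(\cup_iF_i\cup E^{+}(x))}(X_j)=d^{-}_A(X_j)$, and $\sum_{I_\alpha\subseteq I}(c'_\alpha-\sum_{i\in I_\alpha}d^{+}_{F_i}(x))=\sum_{i\in I}c_i$; thus (\ref{22}) becomes
\[
\sum_{j=1}^{t}\bigl(|I|-d^{-}_A(X_j)\bigr)\le\sum_{i\in I}c_i,\qquad\text{i.e.}\qquad \sum_{j=1}^{t}d^{-}_A(X_j)\ge\sum_{i\in I}(t-c_i),
\]
required for all disjoint $X_1,\dots,X_t\subseteq V$ and all $I\subseteq[k]$. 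For fixed $X_1,\dots,X_t$ the right-hand side is maximized by $I=\{i:c_i<t\}$, where it equals $\sum_{i=1}^{k}(t-c_i)^{+}$; since $A=E(D)$, this is precisely the inequality asserted in the Corollary. Combining this with the equivalence of the previous paragraph completes the proof. The only points requiring care are that Theorem~\ref{31} on its own yields only $d^{+}_{F_i^{*}}(x)\le c_i$ (so the elementary arc-deletion argument is needed to force equality) and that condition (i) must be checked to collapse to nothing in this instance; I do not expect any deeper obstacle.
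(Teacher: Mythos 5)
Your proof is correct and takes essentially the same route as the paper, which derives Corollary~\ref{BF} by specializing Corollary~\ref{24} (itself proved from Theorem~\ref{31} via exactly the auxiliary construction you use: adjoin $x$ with $k$ parallel arcs to every vertex of $V(D)$ and take each $F_i$ to be the trivial $x$-arborescence on $\{x\}$) with $U_i=\emptyset$, $l=k$, $I_i=\{i\}$, $c'_i=c_i$. The one point you make explicit that the paper leaves to the reader is the arc-deletion step upgrading $|R(B_i)|\le c_i$ to $|R(B_i)|=c_i$, which, as you note at the outset, presupposes $c_i\le|V(D)|$.
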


Combining our two main results (Theorems \ref{1} and \ref{31}), we have the following result which  characterizes  arborescence augmentation  with their root degrees having both lower  and upper bounds.

\begin{thm} \label{32}
Let $D =( V +x, A)$ be a digraph, $k>0$ be an integer. Let $\{ I_{1}, \ldots, I_{l}\}$ be a partition of $[k]$,  $c_{1}, \ldots, c_{l}, c'_{1}, \ldots, c'_{l}$ be nonnegative integers such that $c_{\alpha} \leq c'_{\alpha}$ for $1 \leq i \leq l$. Let $F_{1}, \ldots, F_{k}$ be arc-disjoint $x$-arborescences in $D$ such that $ \sum_{i \in I_{\alpha}}d^{+}_{F_{i}}(x) \leq c'_{\alpha} $ for $1 \leq \alpha \leq l$. Then they can be completed to $k$ arc-disjoint spanning $x$-arborescences $F^{*}_{1}, \ldots, F^{*}_{k}$ such that $c_{\alpha} \leq \sum_{i \in I_{\alpha}}d^{+}_{F^{*}_{i}}(x) \leq c'_{\alpha}$ for $1 \leq \alpha \leq l$ if and only if for any disjoint $X_{1}, \ldots, X_{t} \subseteq V(D)$ and any $I \subseteq [k]$ that is the union of some of $I_{1}, \ldots, I_{l}$,  
\begin{itemize}
\item[(i)] 
(\ref{4}) holds;  
\item[(ii)] 
(\ref{22}) holds.
\end{itemize}
\end{thm}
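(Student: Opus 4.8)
\textbf{Proof proposal for Theorem~\ref{32}.}
The plan is to derive Theorem~\ref{32} directly from Theorems~\ref{1} and~\ref{31} rather than reproving it from scratch. The necessity of (i) and (ii) is immediate: if $F^{*}_{1},\ldots,F^{*}_{k}$ is a completion satisfying $c_{\alpha}\le\sum_{i\in I_{\alpha}}d^{+}_{F^{*}_{i}}(x)\le c'_{\alpha}$, then in particular it satisfies the lower-bound constraints, so Theorem~\ref{1} forces (\ref{4}), i.e.\ (i); and it satisfies the upper-bound constraints, so Theorem~\ref{31} forces (\ref{22}), i.e.\ (ii). (Note that condition (i) of Theorem~\ref{31}, inequality (\ref{11}), need not be listed separately here because, as observed right after Theorem~\ref{1}, (\ref{4}) with $I=\emptyset$ and $c$'s irrelevant already implies $d^{-}_{A\setminus\cup F_i}(X)\ge|P(X)|$, so (\ref{11}) is subsumed by (i).) So the whole content is the sufficiency direction.

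For sufficiency, the natural approach is a two-stage augmentation. Stage one: assuming (i) holds, apply Theorem~\ref{1} to obtain arc-disjoint spanning $x$-arborescences $F'_{1},\ldots,F'_{k}$ completing $F_{1},\ldots,F_{k}$ with $\sum_{i\in I_{\alpha}}d^{+}_{F'_{i}}(x)\ge c_{\alpha}$ for all $\alpha$. The trouble is that this stage gives us no control on the upper side, so the $F'_i$ need not satisfy $\sum_{i\in I_\alpha}d^+_{F'_i}(x)\le c'_\alpha$; naively we cannot then invoke Theorem~\ref{31} on the $F'_i$ because its hypothesis $\sum_{i\in I_\alpha}d^+_{F'_i}(x)\le c'_\alpha$ may fail, and even if it held, re-running Theorem~\ref{31} would be vacuous since the $F'_i$ are already spanning. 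So the real work is to run a \emph{single} augmentation that respects both bounds simultaneously, using the structure behind the proofs of Theorems~\ref{1} and~\ref{31}.

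Concretely, I expect the proof to proceed by mimicking the proof of Theorem~\ref{1}: one reduces the lower-bounded augmentation problem to a plain arborescence-packing problem (in the spirit of Theorem~\ref{16}/Theorem~\ref{10}) on an auxiliary digraph obtained by attaching, for each class $I_{\alpha}$, a gadget that forces at least $c_{\alpha}$ of the relevant root-arcs to be used, with the function $w_{\overline I}$ encoding how many extra arcs from $x$ are still available. To also enforce the upper bounds $c'_{\alpha}$, one adds a capacity-type restriction to the gadget so that at most $c'_{\alpha}$ root-arcs in class $I_{\alpha}$ can be selected; this turns the auxiliary problem into a packing problem \emph{with lower and upper degree bounds at a single node}, which is exactly the setting of Theorem~\ref{Cai-21}. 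The key step is then a translation lemma: conditions (i) and (ii) of Theorem~\ref{32}, written out over all choices of disjoint $X_1,\ldots,X_t$ and all $I$ that are unions of classes, are \emph{equivalent} to conditions (i)--(iii) of Theorem~\ref{Cai-21} applied to the auxiliary digraph with the $f,g$ coming from the $c_\alpha,c'_\alpha$. Establishing that equivalence — showing that the family of set inequalities in (\ref{4}) matches (the $t\ge1$ and $t=0$ cases of) (\ref{C-F-1}) and $\widetilde f(V)\le k$, while the family in (\ref{22}) matches condition (iii) $\widetilde g(X)\ge k-d^-(X)$ — is the technical heart, and the main obstacle, because it requires carefully matching the combinatorial roles of $I$ versus $\overline I$, the bookkeeping of the already-used root-arcs $\sum_{i\in I_\alpha}d^+_{F_i}(x)$, and the $w_{\overline I}$ terms on both sides, and checking that the "union of classes" quantifier on $I$ is exactly what is needed (no more, no less) for the auxiliary in/out-degree conditions. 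Once the translation is in place, Theorem~\ref{Cai-21} yields the required packing in the auxiliary digraph, and reading it back gives $F^{*}_{1},\ldots,F^{*}_{k}$ with $c_{\alpha}\le\sum_{i\in I_{\alpha}}d^{+}_{F^{*}_{i}}(x)\le c'_{\alpha}$ for all $\alpha$, completing the proof.
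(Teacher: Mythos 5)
Your necessity argument is correct and matches the paper, and you correctly diagnose why the naive two-stage augmentation (first Theorem~\ref{1}, then Theorem~\ref{31}) fails. But your sufficiency proposal has a genuine gap: the entire content is deferred to an unproved ``translation lemma'' that you yourself flag as the main obstacle, namely that conditions (\ref{4}) and (\ref{22}) over an auxiliary digraph with gadgets are equivalent to conditions (i)--(iii) of Theorem~\ref{Cai-21}. You give no construction of the auxiliary digraph, no definition of the gadgets, and no verification of the equivalence, so this is a plan rather than a proof. Worth noting, the paper's authors explicitly raise as an \emph{open question} in the final remarks whether Theorem~\ref{32} and Theorem~\ref{Cai-21} are related by such a derivation, which suggests that the translation you propose is not routine and may not go through as sketched.

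The paper's actual argument is quite different and does not pass through Theorem~\ref{Cai-21}. It proceeds by induction on $\tau$, the number of indices $\alpha$ with $\sum_{i\in I_\alpha}d^+_{F_i}(x) < c_\alpha$. When $\tau=0$ it invokes Theorem~\ref{31} directly. When $\tau\ge 1$, it picks such an $\alpha_0$, temporarily lowers $c_{\alpha_0}$ to the current value $\sum_{i\in I_{\alpha_0}}d^+_{F_i}(x)$, applies the induction hypothesis to get a completion $F'_1,\ldots,F'_k$ obeying all bounds except possibly the original lower bound at $\alpha_0$, and from the existence of this completion derives the strict inequality (\ref{25}) showing $H(I,\mathcal{F})$ is strictly below the slack $\sum_{I_\alpha\subseteq I}(c'_\alpha-\sum_{i\in I_\alpha}d^+_{F_i}(x))$ for all $I\supseteq I_{\alpha_0}$. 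It then uses Theorem~\ref{1} only to locate a single new arc $e_0$ from $x$ to add to some $F_{i_0}$ with $i_0\in I_{\alpha_0}$, and verifies via the necessity direction of Theorem~\ref{1}, Observation~\ref{21}, and (\ref{25}) that both (\ref{4}) and (\ref{22}) survive the update. This one-arc-at-a-time augmentation, with simultaneous maintenance of both inequality families, is precisely the ``single augmentation respecting both bounds'' you correctly anticipated would be needed, but the mechanism is internal induction plus slack bookkeeping, not a reduction to Cai--Frank.
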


As an  application of Theorem~\ref{32}, we shall deduce a result that was first discovered by B\'{e}rczi and Frank (\cite[Theorem 3]{berczi2}). This is explained in Section 5.

This paper is organized as follows. In Section 2, we shall 
study the  arborescence augmentation with their root degrees bounded below,  
prove Theorem~\ref{1}. As an application, we shall give a characterization for branching covering of digraphs with their root degrees bounded below, present Theorem \ref{13}, and deduce Corollary~\ref{14} from it.  
In Section 3, we shall introduce  the set $D(\Omega)$ that consists of all families of disjoint subsets of a finite set $\Omega$, define  a partial order $\leq$ on $D(\Omega)$; and study this partial order by \lq\lq properly intersecting elimination operations\rq\rq (which had been used by B\'{e}rczi and Frank in \cite{berczi1}). This section serves as preparation  for the proof of Theorem~\ref{31}.  
In Section 4, we shall study arborescence augmentation with their root degrees  bounded above, prove Theorem~\ref{31}. 
As an application, 
we give a characterization for the existence of arc-disjoint $c^{-}$-branchings, whose root sets contain given vertices; this will be Corollary \ref{24}; then we shall deduce Corollary \ref{BF} from Corollary \ref{24}. We shall also  prove Theorem \ref{32} by combining
Theorems \ref{1} and \ref{31}. 
The final Section 5 contains some remarks. By using the framework on bipartite graphs and supermodular functions (which is due to Lov\'asz \cite{L-30}), we integrate our work of arborescence augmentation to this well-studied framework, and present some more generalized forms of our work.

\section{Arborescence augmentation with their root degrees bounded below, and branching covering} 

Let $\Omega $ be a finite set. Two subsets $X, Y \subseteq \Omega$ are said to be {\em intersecting} if $X \cap Y \neq \emptyset$ and {\em properly intersecting} if $X \cap Y$, $X \setminus Y$, and $Y \setminus X \neq \emptyset$.
An {\em (positively) intersecting submodular function} is a set function $f: 2^{\Omega} \rightarrow \mathbb{R}$,  where $2^{\Omega}$ denotes the power set of $\Omega$, which satisfies the condition: for every {properly} intersecting pair $S, T \subseteq \Omega$ (such that $f(X), f(Y)>0$), we have that $f(S)+f(T) \geq f(S \cup T)+f(S \cap T)$. If $-f$ is (positively) intersecting submodular, then $f$ is said to be {\em (positively) intersecting supermodular}.

Let $D=(V+x,A)$ be a digraph, and $F_{1}, \ldots, F_{k}$ be arc-disjoint  $x$-arborescences in $D$.
Suppose $X, Y \subseteq V+x$, $X \cap Y \neq \emptyset$, and $I \subseteq [k]$. Recall that $d^{-}(X)+d^{-}(Y) \geq d^{-}(X \cup Y)+d^{-}(X \cap Y)$. Also, it is easy to check that $|P_{I}(X)|+|P_{I}(Y)| \leq |P_{I}(X \cup Y)|+|P_{I}(X \cap Y)|$. Hence, both the functions $d^{-}$ and  $d^{-}-|P_{I}|$ are intersecting submodular.

\begin{lem}\label{8}
	Suppose $f:2^{\Omega} \rightarrow \mathbb{R}$ is intersecting submodular and nonnegative. If $f(X)=f(Y)=0$ and $X\cap Y \neq \emptyset$, 
	then $f(X\cup Y)=f(X \cap Y)=0$.
\end{lem}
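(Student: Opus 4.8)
The plan is to prove Lemma~\ref{8} directly from the definition of intersecting submodularity together with nonnegativity, without invoking any of the digraph structure; it is a purely set-function statement. First I would observe that the hypothesis $X \cap Y \neq \emptyset$ leaves two cases: either $X,Y$ are properly intersecting (i.e.\ $X \setminus Y \neq \emptyset$ and $Y \setminus X \neq \emptyset$), or one of them contains the other. In the latter case, say $X \subseteq Y$, we have $X \cup Y = Y$ and $X \cap Y = X$, so $f(X \cup Y) = f(Y) = 0$ and $f(X \cap Y) = f(X) = 0$ trivially, and symmetrically if $Y \subseteq X$. So the only substantive case is when $X$ and $Y$ are properly intersecting.

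In the properly intersecting case I would apply the defining inequality of intersecting submodularity to the pair $X,Y$: since the pair is properly intersecting, we get
\begin{equation*}
f(X) + f(Y) \geq f(X \cup Y) + f(X \cap Y).
\end{equation*}
By hypothesis the left-hand side is $0+0 = 0$, so $f(X \cup Y) + f(X \cap Y) \leq 0$. Now invoke nonnegativity of $f$: both $f(X \cup Y) \geq 0$ and $f(X \cap Y) \geq 0$, and their sum is $\leq 0$, forcing each of them to equal $0$. This completes the proof.

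The only point that needs care — and it is more a bookkeeping subtlety than a genuine obstacle — is the precise reading of the phrase ``positively intersecting submodular'' in the paper's definition, where the inequality is stipulated only for properly intersecting pairs $S,T$ with $f(S), f(T) > 0$. Under that reading the inequality does not even apply here, since $f(X) = f(Y) = 0$ are not positive; so I would note that for this lemma one uses the (stronger, and in the relevant applications automatically available) version in which the submodular inequality holds for all properly intersecting pairs with $f(S), f(T) \geq 0$, which is exactly what the authors mean when they call $d^{-}$ and $d^{-} - |P_I|$ intersecting submodular on the preceding lines. With that convention fixed, the argument above is complete. I do not expect any real difficulty; the lemma is a one-line consequence of submodularity plus nonnegativity, and its role is simply to let one later conclude that the family of ``tight'' sets (those on which a given nonnegative intersecting submodular function vanishes, or more generally attains a fixed extreme value) is closed under union and intersection among intersecting pairs.
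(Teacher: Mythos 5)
Your proof is correct and follows essentially the same route as the paper's: dispose of the nested case trivially, then in the properly intersecting case apply the submodular inequality and use nonnegativity to force both terms to zero. Your side remark about the parenthetical ``(positively)'' is resolved correctly — the lemma's hypothesis is \emph{intersecting submodular}, not \emph{positively} intersecting submodular, so the inequality is available for all properly intersecting pairs and no extra convention is needed.
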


\begin{proof}
	If $X \subseteq Y$ or $Y \subseteq X$, then the lemma clearly holds.
	Suppose $X$ and $Y$ are {properly} intersecting.
	Since $0= f(X)+f(Y) \geq f(X\cup Y)+f(X \cap Y) \geq 0$, we have $f(X\cup Y)=f(X \cap Y)=0$. 
\end{proof}

\subsection{Proof of Theorem~\ref{1}}

\noindent 
\textbf{($\Rightarrow$) Necessity:}
For $i \in I$ and $1 \leq j \leq t$,
since $x$-arborescence  $F_{i}$ 
can be completed to spanning $x$-arborescences $F^{*}_{i}$, we have
$d_{F_{i}^{*} \setminus F_{i}}^{-}(X_{j}) \geq |P_{i}(X_{j})|$. Hence,
\begin{equation}\label{5}
\sum \limits^{t}_{j=1} d^{-}_{\cup_{i \in I}F_{i}^{*} \setminus F_{i}}(X_{j})
= \sum \limits^{t}_{j=1} \sum \limits_{i \in I}d^{-}_{F_{i}^{*} \setminus F_{i}}(X_{j})
\geq \sum \limits^{t}_{j=1} \sum \limits_{i \in I}|P_{i}(X_{j})|
=\sum \limits^{t}_{j=1} |P_{I}(X_{j})|.
\end{equation}

For $u \in V \setminus \cup_{j=1}^{t}X_{j}$, note that for any $i_{0} \in \overline{I}$, $[x,u]_{F^{*}_{i_{0}} \setminus F_{i_{0}}} \leq 1 $, and the equality holds only if $u \notin V(F_{i_{0}})$. Thus 
$$[x,u]_{\cup_{i\in \overline{I}}F^{*}_{i} \setminus F_{i}} \leq \min \{ |\{ i \in \overline{I} : u \notin V(F_{i})\}|, ~ [x,u]_{A \setminus \cup_{i=1}^{k}F_{i}}\}= w_{\overline{I}}(u).$$   
For $I_{\alpha} \subseteq \overline{I}$, since  $\sum \limits_{i \in I_{\alpha}}d^{+}_{F^{*}_{i}}(x) \geq c_{\alpha}$, $d^{+}_{\cup _{i \in I_{\alpha}}F^{*}_{i} \setminus F_{i}}(x) \geq c_{\alpha}-\sum \limits_{i \in I_{\alpha}}d^{+}_{F_{i}}(x)$.
Therefore,    

\begin{equation} \label{6}
\begin{split} 
 &\sum \limits^{t}_{j=1} d^{-}_{\cup_{i \in \overline{I}}F^{*}_{i} \setminus F_{i}}(X_{j})+ \sum \limits_{u \in V \setminus \cup_{j=1}^{t}X_{j}} w_{\overline{I}}(u)  
\geq \sum \limits_{j=1}^{t}[x, X_{j}]_{\cup_{i \in \overline{I}}F^{*}_{i} \setminus F_{i}}+\sum \limits_{u \in V \setminus \cup_{j=1}^{t}X_{j}}[x,u]_{\cup_{i \in \overline{I}}F^{*}_{i} \setminus F_{i}}\\
& = [x,V]_{\cup_{i \in \overline{I}}F^{*}_{i} \setminus F_{i}}
= \sum \limits_{i \in \overline{I}}d^{+}_{F^{*}_{i} \setminus F_{i}}(x)
= \sum \limits_{I_{\alpha} \subseteq \overline{I}} \sum \limits _{i \in I_{\alpha} }d^{+}_{F^{*}_{i} \setminus F_{i}}(x) 
\geq \sum \limits_{ I_{\alpha} \subseteq  \overline{I}}(c_{\alpha}-\sum_{i \in I_{\alpha}}d^{+}_{F_{i}}(x)).  
\end{split}
\end{equation}

Combining (\ref{5}) and~(\ref{6}), we have
\[
\begin{split}
\sum \limits ^{t}_{j=1}d^{-}_{A \setminus \cup_{i=1}^{k}F_{i}} (X_{j})
& \geq \sum \limits ^{t}_{j=1}d^{-}_{\cup_{i=1}^{k}F^{*}_{i} \setminus F_{i}} (X_{j})
= \sum \limits_{j=1}^{t}d^{-}_{\cup_{i \in I}F^{*}_{i} \setminus F_{i}} (X_{j})+\sum \limits_{j=1}^{t}d^{-}_{\cup_{i \in \overline{I}}F^{*}_{i} \setminus F_{i}} (X_{j}) \\
& \geq
\sum \limits ^{t}_{j=1} |P_{I}(X_{j})| +
\sum \limits _{ I_{\alpha} \subseteq  \overline{I}}(c_{\alpha}-\sum_{i \in I_{\alpha}}d^{+}_{F_{i}}(x))- \sum_{u \in V \setminus
	\cup_{j=1}^{t}X_{j}} w_{\overline{I}}(u).
\end{split}
\]

\textbf{($\Leftarrow$) Sufficiency:}
The proof is by induction on the number $\tau$ of $\alpha \in [l]$ such that $ \sum _{i \in I_{\alpha}}d_{F_{i}}^{+}(x) < c_{\alpha}$. If $\tau=0$, then by setting $t=1$ and $I=[k]$ in (\ref{4}), we deduce from Theorem~\ref{10} that  $F_{1}, \ldots, F_{k}$ 
can be completed to be spanning.
For the induction step, suppose $\tau \geq 1$. For an arc $e \in A$, denote the head of $e$ by $h(e)$.

\begin{claim} \label{7}
	For any fixed $\alpha_{0} \in [l]$ with $ \sum _{i \in I_{\alpha_{0}}}d_{F_{i}}^{+}(x) < c_{\alpha_{0}}$, there exists $e_{0} \in E^{+}_{A \setminus \cup_{i=1}^{k}F_{i}}(x)$ and $i_{0} \in I_{\alpha_{0}}$ such that $h(e_{0}) \notin V(F_{i_{0}})$  and (\ref{11}) still holds after we update $F_{i_{0}}:=F_{i_{0}} + e_{0}$.
\end{claim}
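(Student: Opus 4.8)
\textbf{Proof proposal for Claim~\ref{7}.}
The plan is to argue by contradiction. Suppose that for \emph{every} arc $e \in E^{+}_{A \setminus \cup_{i=1}^{k}F_{i}}(x)$ and every $i \in I_{\alpha_{0}}$ with $h(e) \notin V(F_{i})$, adding $e$ to $F_{i}$ destroys condition~(\ref{11}). Since $F_{i}+e$ remains an $x$-arborescence (the head of $e$ is a new vertex, and $e$ leaves $x$), the only way~(\ref{11}) can fail after the update is that for some nonempty $X \subseteq V$ the slack in~(\ref{11}) was exactly zero \emph{before} the update and the new arc $e$ enters $X$ while $P_i$ drops: concretely, there is a nonempty $X \subseteq V$ with $h(e) \in X$, $d^{-}_{A\setminus\cup F_i}(X) = |P(X)|$, and $i \in P_i(X)$ (i.e.\ $X \cap V(F_i) = \emptyset$), because then removing $e$ from the ``available'' arc set lowers the left side by $1$ while $|P(X)|$ also drops by $1$ only if $h(e)$ is the sole vertex of $X$ in $V(F_i)$ after the update — wait, more carefully: after the update $d^{-}$ on the left decreases by exactly $[x,h(e)]$ contribution, and $|P(X)|$ decreases iff $X\cap V(F_i)$ passes from empty to nonempty, which happens precisely when $X\cap V(F_i)=\emptyset$ and $h(e)\in X$. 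So the failure says: for each such pair $(e,i)$ there is a \emph{tight} set $X$ (meaning equality in~(\ref{11})) with $h(e)\in X$ and $X\cap V(F_i)=\emptyset$.

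The key step is then to collect all these tight sets and take their union, using the submodularity established just before the claim. Let me set $f(X) = d^{-}_{A\setminus\cup F_i}(X) - |P(X)|$, which is intersecting submodular and, by~(\ref{11}), nonnegative; the tight sets are exactly its zero set. First I would show that the union $Z$ of all tight sets $X$ with $X\cap V(F_i)=\emptyset$ for a \emph{fixed} $i\in I_{\alpha_0}$ is itself tight (or empty): repeatedly apply Lemma~\ref{8} to the family $\{X : f(X)=0,\ X\cap V(F_i)=\emptyset\}$ — any two members intersect or not, and if they intersect their union is again in the family (it is tight by Lemma~\ref{8} and still disjoint from $V(F_i)$); if they do not intersect, I handle the components separately. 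More precisely, I would let $Z_i$ be the union of all tight sets disjoint from $V(F_i)$ and decompose $Z_i$ into its ``tight connected pieces,'' each of which is a maximal tight set disjoint from $V(F_i)$. The upshot of the contradiction hypothesis is that $N^{+}_{A\setminus\cup F_i}(x) \subseteq \bigcup_{i\in I_{\alpha_0}} Z_i$: every available arc out of $x$ whose head lies outside some $V(F_i)$ ($i\in I_{\alpha_0}$) has its head swallowed by a tight set disjoint from that $V(F_i)$.

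Now I would derive the contradiction by plugging the maximal tight pieces back into the hypothesized inequality~(\ref{4}) with a suitable choice of $I$ and of the $X_j$'s. Take $I = \overline{I_{\alpha_0}}$ (the union of all $I_\beta$ with $\beta\neq\alpha_0$), so that $\overline I = I_{\alpha_0}$ and the right side of~(\ref{4}) contains the positive term $c_{\alpha_0} - \sum_{i\in I_{\alpha_0}} d^+_{F_i}(x) > 0$ minus $\widetilde{w}_{I_{\alpha_0}}(V\setminus \cup X_j)$. Choosing $X_1,\dots,X_t$ to be the maximal tight pieces (which I would first arrange to be pairwise disjoint by another round of Lemma~\ref{8} — overlapping tight sets can be merged), each contributes $d^{-}_{A\setminus\cup F_i}(X_j) = |P_{I}(X_j)|$ on one hand but $|P_{I_{\alpha_0}}(X_j)|$ must also be accounted for; since each $X_j$ is disjoint from some $V(F_i)$, $i\in I_{\alpha_0}$, the term $|P(X_j)| = |P_I(X_j)| + |P_{I_{\alpha_0}}(X_j)|$ with $|P_{I_{\alpha_0}}(X_j)|\geq 1$. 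Carrying this through, $\sum_j d^- = \sum_j |P_I(X_j)|$, so~(\ref{4}) forces $0 \geq c_{\alpha_0} - \sum_{i\in I_{\alpha_0}} d^+_{F_i}(x) - \widetilde{w}_{I_{\alpha_0}}(V\setminus\cup X_j)$; but the hypothesis $N^{+}_{A\setminus\cup F_i}(x)\subseteq\bigcup Z_i$ together with the definition of $w_{I_{\alpha_0}}$ bounds $\widetilde{w}_{I_{\alpha_0}}(V\setminus\cup X_j)$ by the number of available arcs out of $x$ whose heads are \emph{not} in any chosen $X_j$ — and by construction there are none, or at least too few — contradicting $c_{\alpha_0} > \sum_{i\in I_{\alpha_0}} d^+_{F_i}(x)$.

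The main obstacle I anticipate is the bookkeeping in the last step: making the ``tight pieces'' genuinely pairwise disjoint while keeping track of which $V(F_i)$ each is disjoint from, and showing that the $w_{I_{\alpha_0}}$-weight left outside the chosen $X_j$'s is small enough to contradict~(\ref{4}). This requires a careful uncrossing argument and a precise comparison between $\widetilde{w}_{I_{\alpha_0}}$ and the count of available $x$-arcs not absorbed into tight sets. The submodularity lemma (Lemma~\ref{8}) does the structural work, but the combinatorial matching of the two sides of~(\ref{4}) is where I expect the real effort to lie.
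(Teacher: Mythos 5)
Your analysis of when~(\ref{11}) fails after the update has the condition exactly backwards, and this is fatal to the argument. You correctly work out that after adding $e_0$ to $F_{i_0}$ the left side $d^-_{A\setminus\cup F_i}(X)$ drops by one precisely when $h(e_0)\in X$, and that $|P(X)|$ drops by one precisely when $X\cap V(F_{i_0})=\emptyset$ and $h(e_0)\in X$. But when \emph{both} quantities drop, the inequality is preserved. The failure in~(\ref{11}) occurs exactly when the left side drops and the right side does \emph{not}, i.e.\ on tight sets $X$ with $h(e_0)\in X$ and $X\cap V(F_{i_0})\neq\emptyset$. You carry the opposite condition ($X\cap V(F_{i_0})=\emptyset$) into your sets $Z_i$ and the rest of the argument.

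The error is not cosmetic; it destroys the intended contradiction. The correct condition feeds the crucial structural step: for each $u$ with $w_{I_{\alpha_0}}(u)>0$, one picks the \emph{maximal} tight set $X_u\ni u$; Lemma~\ref{8} and maximality force every $X_{i_0}$ produced by the contradiction hypothesis to satisfy $X_{i_0}\subseteq X_u$, and since $X_{i_0}\cap V(F_{i_0})\neq\emptyset$ (the correct condition) this gives $X_u\cap V(F_i)\neq\emptyset$ for \emph{every} $i\in I_{\alpha_0}$ (for $i$ with $u\in V(F_i)$ this is trivial, for the others it comes from $X_{i_0}\subseteq X_u$). Consequently $|P_{I_{\alpha_0}}(X_u)|=0$, so $d^-(X_u)=|P(X_u)|=|P_{\overline{I_{\alpha_0}}}(X_u)|$; that equality is what makes the two sides of~(\ref{4}) collapse once one substitutes $I=\overline{I_{\alpha_0}}$ and takes the $X_j$'s to be the (pairwise disjoint) $X_u$'s, while $\widetilde{w}_{I_{\alpha_0}}(V\setminus\cup_j X_j)=0$ because every $u$ with positive weight is absorbed. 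With your version each $X_j$ misses some $V(F_i)$, $i\in I_{\alpha_0}$, so $|P_{I_{\alpha_0}}(X_j)|\geq 1$ and hence $d^-(X_j)=|P(X_j)|>|P_{\overline{I_{\alpha_0}}}(X_j)|$. This is also internally inconsistent with your later claim that ``$\sum_j d^- = \sum_j |P_I(X_j)|$,'' and the strict slack it injects into~(\ref{4}) is exactly what lets the inequality hold without any contradiction. You should flip the condition, re-derive the maximal tight sets meeting all $V(F_i)$ for $i\in I_{\alpha_0}$, and the rest of your plan then lines up with the paper's argument.
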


\begin{proof} 
	Suppose, to the contrary, 
	that for any $e_{0} \in E^{+}_{A \setminus \cup_{i=1}^{k}F_{i}}(x)$ and $i_{0} \in I_{\alpha_{0}}$ such that $h(e_{0}) \notin V(F_{i_{0}})$, (\ref{11}) does not hold after we update $F_{i_{0}}:=F_{i_{0}} + e_{0}$. 
	By applying (\ref{27}) for $I= \overline{I_{\alpha_{0}}}$, 	 we have
	\[
	\widetilde{w}_{I_{\alpha_{0}}}(V) \geq c_{\alpha_{0}}-\sum_{i \in {I_{\alpha_{0}}}} d^{+}_{F_{i}}(x) > 0.
	\]
	
	Let $u \in V$ such that $w_{I_{\alpha_{0}}}(u)>0$. 
	Since
	$$w_{I_{\alpha_{0}}}(u) = \min \{ | \{i \in I_{\alpha_{0}}: u \notin V(F_{i}) \}|, [x,u]_{D-\cup_{i=1}^{k}F_{i}} \} >0,$$
	there exists $e_{0} \in E^{+}_{A \setminus \cup_{i=1}^{k}F_{i}}(x)$ with $h(e_{0})=u$ and $\{i \in I_{\alpha_{0}}: u \notin V(F_{i}) \} \neq \emptyset$. For any $i_{0} \in \{i \in I_{\alpha_{0}}: u \notin V(F_{i}) \}$, since (\ref{11}) does not hold after we update  $F_{i_{0}}:=F_{i_{0}} + e_{0}$, there exists $X_{i_{0}} \subseteq V$ such that $u \in X_{i_{0}}$, $d_{A \setminus \cup_{i=1}^{k}F_{i}}^{-}(X_{i_{0}})=|P(X_{i_{0}})|$ and $X_{i_{0}} \cap V(F_{i_{0}}) \neq \emptyset$. Choose a maximal $X_{u} \subseteq V$ subject to $u \in X_{u}$ and $d_{A \setminus \cup_{i=1}^{k}F_{i}}^{-}(X_{u})=|P(X_{u})|$. Since $u \in X_{u} \cap X_{i_{0}}$, by Lemma~\ref{8}, $d_{A \setminus \cup_{i=1}^{k}F_{i}}^{-}(X_{u} \cup X_{i_{0}})=|P(X_{u} \cup X_{i_{0}})|$. By maximality of $X_{u}$, we have $X_{i_{0}} \subseteq X_{u}$; combining this with $X_{i_{0}} \cap V(F_{i_{0}}) \neq \emptyset$, we know that $X_{u} \cap V(F_{i_{0}}) \neq \emptyset$. Hence, for any $i \in I_{\alpha_{0}}$, $X_{u} \cap V(F_{i}) \neq \emptyset $. So $P(X_{u}) = P_{\overline{I_{\alpha_{0}}}}(X_{u})$ and $d_{A \setminus \cup_{i=1}^{k}F_{i}}^{-}(X_{u})=|P_{\overline{I_{\alpha_{0}}}}(X_{u})|$.
	
	Also by Lemma~\ref{8} and the maximality, for any $u, v \in V$ such that $w_{I_{\alpha_{0}}}(u),w_{I_{\alpha_{0}}}(v) >0$, we have either $X_{u}=X_{v}$ or $X_{u} \cap X_{v} = \emptyset$.
	%
	%
	If we regard $\mathcal{F}=\{X_{u} : \hbox{where } w_{I_{\alpha_{0}}}(u) >0 \}$ as disjoint subsets of $V$ and let $I= \overline{I_{\alpha_{0}}}$ in (\ref{27}), then (\ref{4}) implies that
	\begin{equation}\label{28}
	\sum_{X \in \mathcal{F}} d^{-}_{A \setminus \cup_{i=1}^{k}F_{i}} (X)\geq  \sum_{X \in \mathcal{F}} |P_{\overline{I_{\alpha_{0}}}}(X)| +
	c_{\alpha_{0}}- \sum_{i \in I_{\alpha_{0}}} d^{+}_{F_{i}}(x).
	\end{equation}
	However, for any $u \in V $ such that $w_{I_{\alpha_{0}}}(u) >0$, $d_{A \setminus \cup_{i=1}^{k}F_{i}}^{-}(X_{u})=|P_{\overline{I_{\alpha_{0}}}}(X_{u})| $, which implies 
	\begin{equation}\label{66}
	\sum_{X \in \mathcal{F}} d^{-}_{A \setminus \cup_{i=1}^{k}F_{i}} (X) =  \sum_{X \in \mathcal{F}} |P_{\overline{I_{\alpha_{0}}}}(X)|.
	\end{equation}
	By (\ref{28}) and (\ref{66}), we have $\sum_{i \in I_{\alpha_{0}}} d^{+}_{F_{i}}(x) \geq c_{\alpha_{0}}$,  a contradiction to the assumption.
\end{proof}


For disjoint subsets $X_{1}, \ldots, X_{t}$  of $V$ and $I \subseteq [k]$ that is the union   of some of $I_{1}, \ldots, I_{l}$, define
\[
\begin{split}
F(X_{1}, \ldots, X_{t}; I)  :=
& \sum  ^{t}_{j=1}d^{-}_{A \setminus \cup_{i=1}^{k}F_{i}} (X_{j})+ \sum_{u \in V \setminus
	\cup_{j=1}^{t}X_{j}} w_{\overline{I}}(u) \\
& -  \sum^{t}_{j=1} |P_{I}(X_{j})| -
\sum _{ I_{\alpha} \subseteq  \overline{I}}(c_{\alpha}-\sum_{i \in I_{\alpha}}d^{+}_{F_{i}}(x)).
\end{split}
\]

The following claim
observes the updates of $F(X_{1}, \ldots, X_{t}; I) $
after we update $F_{i_{0}}:=F_{i_{0}} + e_{0}$ for some $i_{0} \in I_{\alpha_{0}} \subseteq [k]$ and $e_{0} \in E^{+}_{D- \cup_{i=1}^{k}F_{i}}(x)$ such that $h(e_{0}) \notin V(F_{i_{0}})$.

\begin{claim}\label{9}
	If $I_{\alpha_{0}} \subseteq \overline{I}$, then $F(X_{1}, \ldots, X_{t};I)$ does not decrease. If $I_{\alpha_{0}} \subseteq I$, then \\
	$F(X_{1}, \ldots, X_{t};I)$ is decreased by at most $1$. 
\end{claim}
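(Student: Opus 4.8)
The plan is to track each of the four terms in the definition of $F(X_{1},\ldots,X_{t};I)$ separately as we perform the single update $F_{i_{0}}:=F_{i_{0}}+e_{0}$, where $i_{0}\in I_{\alpha_{0}}$, $e_{0}\in E^{+}_{A\setminus\cup_{i=1}^{k}F_{i}}(x)$, and $u:=h(e_{0})\notin V(F_{i_{0}})$. First I would observe that the only quantities affected by the update are: (a) the "leftover" arc set $A\setminus\cup_{i=1}^{k}F_{i}$ loses the single arc $e_{0}=(x,u)$; (b) the vertex set $V(F_{i_{0}})$ gains the vertex $u$, which can only shrink $P_{i}(\cdot)$-type counts for $i=i_{0}$; and (c) $d^{+}_{F_{i_{0}}}(x)$ increases by exactly $1$. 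Everything else — the partition, the $c_{\alpha}$'s, the other arborescences — is untouched. So the whole claim reduces to a term-by-term bookkeeping, split according to whether $I_{\alpha_{0}}\subseteq\overline{I}$ or $I_{\alpha_{0}}\subseteq I$.

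Consider first the case $I_{\alpha_{0}}\subseteq\overline{I}$, so $i_{0}\in\overline{I}$. The term $\sum_{j}d^{-}_{A\setminus\cup F_{i}}(X_{j})$ drops by at most $1$ (it drops by exactly $1$ if $u\in X_{j}$ for some $j$, which is unique, and by $0$ otherwise). The term $\sum_{j}|P_{I}(X_{j})|$ is unchanged, since $P_{I}$ only involves arborescences indexed by $I$ and $i_{0}\notin I$. The term $\sum_{I_{\alpha}\subseteq\overline{I}}(c_{\alpha}-\sum_{i\in I_{\alpha}}d^{+}_{F_{i}}(x))$ decreases by exactly $1$, because the $\alpha_{0}$-summand drops by $1$ and it appears in the sum ($I_{\alpha_{0}}\subseteq\overline{I}$). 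Finally the term $\widetilde{w}_{\overline{I}}(V\setminus\cup_{j}X_{j})$: here $w_{\overline{I}}(u)$ can only decrease — the count $|\{i\in\overline{I}:u\notin V(F_{i})\}|$ drops by $1$ (or stays if the min was already governed by the leftover-degree term), and $[x,u]_{A\setminus\cup F_{i}}$ also drops by $1$ — so $w_{\overline{I}}(u)$ decreases by at most $1$, and all other $w_{\overline{I}}(v)$ are unchanged; but $w_{\overline{I}}(u)$ contributes only if $u\notin\cup_{j}X_{j}$. The two signs in $F$ in front of these last pieces matter: $F$ has $+\sum d^{-}$, $+\widetilde{w}_{\overline{I}}$, $-\sum|P_{I}|$, $-\sum(c_{\alpha}-\cdots)$. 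One checks that in the subcase $u\in X_{j_{0}}$: the $d^{-}$-term drops by $1$, the $c$-term drops by $1$ (contributing $+1$ to $F$ via the minus sign), the $w$-term is unchanged (since $u\notin V\setminus\cup X_{j}$) — net change $\ge 0$. In the subcase $u\notin\cup_{j}X_{j}$: the $d^{-}$-term is unchanged, the $w$-term drops by at most $1$, the $c$-term drops by exactly $1$ — net change $\ge 0$. Either way $F$ does not decrease.

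Now the case $I_{\alpha_{0}}\subseteq I$, so $i_{0}\in I$. The term $\sum_{j}d^{-}_{A\setminus\cup F_{i}}(X_{j})$ again drops by at most $1$. The term $\sum_{j}|P_{I}(X_{j})|$ can drop by at most $1$: only $P_{i_{0}}(X_{j})$ is affected, and only for the (unique) $X_{j}$ containing $u$, and only if $X_{j}\cap V(F_{i_{0}})$ was empty before; since $F$ carries this term with a minus sign, a drop here can only increase $F$. The term $\sum_{I_{\alpha}\subseteq\overline{I}}(c_{\alpha}-\cdots)$ is unchanged, because $I_{\alpha_{0}}\subseteq I$ means $I_{\alpha_{0}}\not\subseteq\overline{I}$, so the changed summand is not present. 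The term $\widetilde{w}_{\overline{I}}(V\setminus\cup_{j}X_{j})$ is unchanged, since $w_{\overline{I}}$ involves only indices in $\overline{I}$ and $i_{0}\in I$. Hence the only strictly negative contribution to $F$ is the possible drop of $1$ in the $\sum d^{-}$ term; combined with the other changes this shows $F$ decreases by at most $1$, as claimed.

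The routine part is the sign-chasing in the four terms; the one place that needs a little care — and which I would spell out — is the interaction between the drop in $d^{-}_{A\setminus\cup F_{i}}(X_{j})$ and the drop in $w_{\overline{I}}(u)$ (resp.\ in the $c$-term) in the first case, since both are triggered by the removal of the same arc $e_{0}=(x,u)$ and one must verify they are never "double-counted" against $F$ in the wrong direction; partitioning on whether $u\in\cup_{j}X_{j}$ resolves this cleanly, because $u$ lies in at most one of $X_{1},\ldots,X_{t}$ and in $V\setminus\cup_{j}X_{j}$ exactly when it lies in none of them.
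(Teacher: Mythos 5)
Your Case~1 is correct and mirrors the paper's bookkeeping, including the subcase split on whether $h(e_0)\in\bigcup_j X_j$. But your Case~2 contains a false step. You assert that $\widetilde{w}_{\overline I}(V\setminus\bigcup_j X_j)$ is unchanged ``since $w_{\overline I}$ involves only indices in $\overline I$ and $i_0\in I$.'' That is not so: by definition, for $u\in N^+(x)$,
\[
w_{\overline I}(u)=\min\bigl\{\,|\{i\in\overline I:u\notin V(F_i)\}|,\ [x,u]_{A\setminus\cup_{i=1}^kF_i}\,\bigr\},
\]
and the second argument of the $\min$ drops by $1$ when $e_0$ is removed from the residual arc set --- this happens no matter which $F_{i_0}$ receives $e_0$. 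So $w_{\overline I}(h(e_0))$ can decrease by $1$ in Case~2 as well, namely whenever the $\min$ was attained by the residual in-degree term. Your accounting therefore omits a potential negative contribution to $F$, and your closing sentence (``the only strictly negative contribution to $F$ is the possible drop of $1$ in the $\sum d^-$ term'') is not justified.

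The claim nonetheless holds, because the two drops cannot co-occur: if $h(e_0)\in\bigcup_j X_j$, then $\sum_j d^-_{A\setminus\cup_i F_i}(X_j)$ drops by $1$ but the $w$-sum over $V\setminus\bigcup_j X_j$ is untouched; if $h(e_0)\notin\bigcup_j X_j$, the $d^-$ sum is untouched and only the $w$-sum can drop. This is exactly the same subcase split you already carried out in Case~1; you need to repeat it in Case~2. The paper's proof avoids the pitfall by first noting, independently of $I$, that $\sum_j d^-_{A\setminus\cup_i F_i}(X_j)+\sum_{u\in V\setminus\cup_j X_j}w_{\overline I}(u)$ decreases by at most $1$ in total, and only afterwards branching on whether $I_{\alpha_0}\subseteq I$ or $I_{\alpha_0}\subseteq\overline I$ to handle the $c$-term. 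Restructure Case~2 accordingly and your argument is sound.
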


\begin{proof}
	Observe that,
	for $1 \leq j \leq t$, $|P_{I}(X_{j})|$ does not increase. If $h(e_{0}) \in \cup_{j=1}^{t}X_{j}$, $\sum  ^{t}_{j=1}d^{-}_{A \setminus \cup_{i=1}^{k}F_{i}} (X_{j})$ is decreased by $1$ and $\sum_{u \in V \setminus
		\cup_{j=1}^{t}X_{j}} w_{\overline{I}}(u)$ stays the same. If $h(e_{0}) \notin \cup_{j=1}^{t}X_{j}$, $\sum  ^{t}_{j=1}d^{-}_{A \setminus \cup_{i=1}^{k}F_{i}} (X_{j})$ stays the same and $\sum_{u \in V \setminus
		\cup_{j=1}^{t}X_{j}} w_{\overline{I}}(u)$ is decreased by at most $1$. Hence, $\sum  ^{t}_{j=1}d^{-}_{A \setminus \cup_{i=1}^{k}F_{i}} (X_{j})+ \sum_{u \in V \setminus
		\cup_{j=1}^{t}X_{j}} w_{\overline{I}}(u)$ is decreased by at most $1$. If $I_{\alpha_{0}} \subseteq \overline{I}$, then $\sum _{ I_{\alpha} \subseteq  \overline{I}}(c_{\alpha}-\sum_{i \in I_{\alpha}}d^{+}_{F_{i}}(x))$ is decreased by $1$. If $I_{\alpha_{0}} \subseteq I$, then $\sum _{ I_{\alpha} \subseteq  \overline{I}}(c_{\alpha}-\sum_{i \in I_{\alpha}}d^{+}_{F_{i}}(x))$ stays the same. 
	This proves the claim. 
\end{proof}

Toward the proof of the sufficiency of Theorem~\ref{1},
we try to find an $\alpha_{0} \in [l]$, $i_{0} \in I_{\alpha_{0}}$, $e_{0} \in E^{+}_{D-\cup_{i=1}^{k}F_{i}}(x)$ such that: $(i)$ $h(e_{0}) \notin V(F_{i_{0}})$; $(ii)$ $ \sum _{i \in I_{\alpha_{0}}} d_{F_{i}}^{+}(x) < c_{\alpha_{0}}$; and $(iii)$ (\ref{4}) still holds after we update $F_{i_{0}}:=F_{i_{0}} + e_{0}$. Add $e_{0}$ to $F_{i_{0}}$ and continue the process until for any $\alpha \in [l]$, $ \sum_{i \in  I_{\alpha}}d_{F_{i}}^{+}(x) \geq c_{\alpha}$; meanwhile, (\ref{4}) sill holds. Since (\ref{4}) implies (\ref{11}) by setting $I=[k]$ and $t=1$,
by Theorem~\ref{10},  $F_{i}$ can be completed to be spanning.

If $\tau=1$, then there exists exactly one $\alpha_{0} \in [l]$ such that $ \sum _{i \in I_{\alpha_{0}}}d_{F_{i}}^{+}(x) < c_{\alpha_{0}}$.
Pick  $e_{0} \in E^{+}_{A \setminus \cup_{i=1}^{k}F_{i}}(x)$ and $i_{0} \in I_{\alpha_{0}}$  that is provided by Claim~\ref{7}. 
After we update  $F_{i_{0}}:=F_{i_{0}} + e_{0}$, for $1 \leq j \leq t$,
\begin{equation}\label{3}
d^{-}_{A \setminus \cup_{i=1}^{k}F_{i}} (X_{j})\geq |P(X_{j})| \geq |P_{I}(X_{j})|.
\end{equation}
If $I_{\alpha_{0}} \subseteq I$, since $c_{\alpha}-\sum_{i \subseteq I_{\alpha}}d_{F_{i}}^{+}(x) \leq 0 $ for $I_{\alpha } \subseteq \overline{I}$, (\ref{3}) implies (\ref{4}) still holds. If $I_{\alpha_{0}} \subseteq \overline{I}$, then thanks to Claim~\ref{9}, (\ref{4}) still holds.

Suppose $\tau\geq 2$, and without loss of generality, suppose $\sum_{i \in I_{\alpha}}d^{+}_{F_{i}}(x) < c_{\alpha}$ for $\alpha = 1, 2$. Since (\ref{4}) holds for any disjoint subsets $X_{1}, \ldots, X_{t}$ of $V$ and any subsets $I$ as the union of some of $I_{1} \cup I_{2},I_{3},\ldots,I_{l}$, by induction hypothesis, $F_{i}$ can be completed to $k$ arc-disjoint spanning  $x$-arborescences $F'_{i}$ such that $\sum_{i \in I_{1} \cup I_{2}}d^{+}_{F'_{i}}(x) \geq c_{1}+c_{2}$ and $\sum_{i \in I_{\alpha} }d^{+}_{F'_{i}}(x) \geq c_{\alpha}$ for $3 \leq \alpha \leq l$. Since $\sum_{i \in I_{1} \cup I_{2}}d^{+}_{F'_{i}}(x) \geq c_{1}+c_{2}$, we have either  $\sum_{i \in I_{1} }d^{+}_{F'_{i}}(x) \geq c_{1}$ or $\sum_{i \in I_{2} }d^{+}_{F'_{i}}(x) \geq c_{2}$, and without loss of generality, suppose the former inequality holds.

\noindent \textbf{Case 1.} $\sum_{i \in I_{2} }d^{+}_{F'_{i}}(x) > \sum_{i \in I_{2} }d^{+}_{F_{i}}(x)$.

In this case, there exists $e_{0} \in E_{F'_{i_{0}} \setminus F_{i_{0}}}^{+}(x)$ for some $i_{0} \in I_{2}$. Update $F_{i_{0}}:=F_{i_{0}}+e_{0}$. Let $I$ be a union of some of $I_{1}, \ldots, I_{l}$.
If $I_{2} \subseteq I$, since $F_{i}$ can be completed to $F'_{i}$ for $i \in I$, (\ref{5}) holds; since for any $I_{\alpha} \subseteq \overline{I}$, $\sum_{i \in I_{\alpha}} d_{F'_{i}}^{+}(x) \geq c_{\alpha}$, (\ref{6}) holds. Thus (\ref{4}) still holds.
If $I_{2} \subseteq \overline{I}$, by Claim~\ref{9}, $F(X_{1}, \ldots, X_{t};I)$ does not decrease, and thus (\ref{4}) still holds.

\noindent \textbf{Case 2.} $\sum_{i \in I_{2} }d^{+}_{F'_{i}}(x) = \sum_{i \in I_{2} }d^{+}_{F_{i}}(x)$.

Recall that our proof is by induction on the number $\tau$ of $\alpha \in [l]$ such that $ \sum _{i \in I_{\alpha}}d_{F_{i}}^{+}(x)$ $<$ $c_{\alpha}$.
By setting $ c_{1}:=\sum_{i \in I_{1}}d^{+}_{F_{i}}(x) $, the number $\tau$ is reduced by $1$; by the induction hypothesis,
$F_{1}, \ldots, F_{k}$ can be completed to arc-disjoint spanning $x$-arborescences $F''_{1}, \ldots, F''_{k}$ 
such that $\sum_{i \in I_{1} }d^{+}_{F''_{i}}(x) \geq \sum_{i \in I_{1} }d^{+}_{F_{i}}(x) $  and $\sum_{i \in I_{\alpha} }d^{+}_{F''_{i}}(x) \geq c_{\alpha}$ for $2 \leq \alpha \leq l$.
Pick an $e_{0} \in E_{F''_{i_{0}} \setminus F_{i_{0}}}^{+}(x)$ for some $i_{0} \in I_{2}$ and update $F_{i_{0}}:=F_{i_{0}}+e_{0}$. 

Let $I$ be a union of some of $I_{1}, \ldots, I_{l}$.
If $ I_{1} \subseteq I$, since $F_{i}$ can be completed to $F''_{i}$ for $ i \in I$, (\ref{5}) holds; since $\sum_{i \in \overline{I}}d^{+}_{F''_{i}}(x) \geq \sum_{I_{\alpha} \subseteq \overline{I}}c_{\alpha}$, (\ref{6}) holds. Thus (\ref{4}) still holds.
If $I_{2} \subseteq \overline{I}$, by Claim~\ref{9}, $F(X_{1}, \ldots, X_{t};I)$ does not decrease, and thus (\ref{4}) holds. 

The only left case is that
$ I_{1} \subseteq \overline{I}$ and $I_{2} \subseteq I$.
Since $\sum_{i \in I_{2}}d_{F_{i}}^{+}(x) < c_{2}$ and $\sum_{i \in I_{2} }d^{+}_{F'_{i}}(x)$ $=$ $\sum_{i \in I_{2} }d^{+}_{F_{i}}(x)$ (the assumption of this case), $\sum_{i \in I_{1} }d^{+}_{F'_{i}}(x) \geq c_{1} + c_{2}-\sum_{i \in I_{2} }d^{+}_{F'_{i}}(x)$ $\geq$ $c_{1}+1$.
Since $F_{i}$ can be completed to $F'_{i}$ for $i \in I$, (\ref{5}) holds; since $\sum_{i \in \overline{I}}d^{+}_{F'_{i}}(x)= \sum_{i \in I_{1}}d^{+}_{F'_{i}}(x)+ \sum_{I_{\alpha} \subseteq \overline{I \cup I_{1}}}\sum_{i \in I_{\alpha}}d^{+}_{F'_{i}}(x)> \sum_{I_{\alpha} \subseteq \overline{I}} c_{\alpha} $, (\ref{6}) holds, but the equality of (\ref{6}) does not hold.
This proves $F(X_{1}, \ldots, X_{t};I)$ $>$ $0$.
By Claim~\ref{9}, $F(X_{1}, \ldots, X_{t};I)$ is decreased by at most $1$ when we update  $F_{i_{0}}:=F_{i_{0}}+e_{0}$, and thus (\ref{4}) holds.

This finishes the proof of Theorem~\ref{1}.
\QEDA

\subsection{Branching covering with their root degrees bounded below}  

As an application of Theorem~\ref{1}, we give a characterization for branching covering with their root degrees bounded below, which is Theorem \ref{13}. From it, we  deduce Corollary \ref{14}, which is a strengthened version of Theorem~\ref{12}.

A {\em decomposition} of a graph $G$ is a set of edge-disjoint subgraphs with
union $G$. The {\em arboricity} of $G$ is the minimum size of a decomposition
of $G$ into forests.  The {\em fractional arboricity} of $G$, introduced by
Payan~\cite{CP} (also \cite{PJ}) and here denoted $\iarb_1(G)$, is defined
by $\iarb_1(G)= \max_{\nul\ne H \subseteq G}  \frac {\E{H}} {\V{H}-1}.$
The Arboricity Theorem of Nash-Williams~\cite{NW} characterizes when a graph
has arboricity at most $k$.

For a digraph $D$, the {\em fractional arboricity} of $D$ is  the fractional arboricity of its underlying graph,  written as $\iarb_1(D)$. 
Let $\Delta^{-}(D)$ be the maximum in-degree of $D$. 

\begin{thm} \label{13}
	A digraph $D$ can be decomposed into $k$ $c^{+}$-branchings if and only if $\Delta^{-}(D) \leq k$,  {$\iarb_{1}(D) \leq k$} and  {$\frac{|E(D)|}{|V(D)|-c} \le k$}.
\end{thm}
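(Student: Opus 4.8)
\textbf{Proof proposal for Theorem~\ref{13}.}

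The plan is to derive Theorem~\ref{13} from Theorem~\ref{1} by the standard device of adding a new root vertex. First I would dispose of the necessity direction, which is routine: if $D$ decomposes into $k$ branchings then each arc of $D$ lies in exactly one branching, so $\Delta^-(D)\le k$ is immediate (each branching contributes at most one arc into any vertex); the underlying graph decomposes into $k$ forests, so by the Nash--Williams Arboricity Theorem $\iarb_1(D)\le k$; and each branching has at least $c$ components, hence at most $|V(D)|-c$ arcs, so $|E(D)|\le k(|V(D)|-c)$, i.e. $\frac{|E(D)|}{|V(D)|-c}\le k$.

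For sufficiency, form a new digraph $D'=(V+x,A')$ by adding a vertex $x$ together with, for each $v\in V(D)$, exactly $k$ parallel arcs from $x$ to $v$. A decomposition of $D$ into $k$ $c^+$-branchings corresponds exactly to completing the $k$ empty $x$-arborescences $F_1=\cdots=F_k=\emptyset$ to arc-disjoint spanning $x$-arborescences $F_1^*,\dots,F_k^*$ in $D'$ using only arcs of $A\cup E^+(x)$, with the property that each $F_i^*$ uses at least $c$ arcs out of $x$ (equivalently, the branching $B_i=F_i^*\cap A$ has $|R(B_i)|\ge c$). Here I take the partition of $[k]$ to be the single block $I_1=[k]$ and the lower bound $c_1=c\cdot k$, so that $\sum_{i\in[k]}d^+_{F_i^*}(x)\ge ck$; since the $B_i$ partition $A$ and $\sum_i|R(B_i)|=\sum_i(|V|-|E(B_i)|)=k|V|-|E(D)|$, the bound $\sum_i|R(B_i)|\ge ck$ together with $|E(D)|\le k(|V|-c)$ does \emph{not} by itself force each $|R(B_i)|\ge c$; so a more careful choice is needed. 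The clean fix is to use the finer partition $I_i=\{i\}$, $l=k$, $c_i=c$ for all $i$, which forces $d^+_{F_i^*}(x)\ge c$, i.e. $|R(B_i)|\ge c$, for every $i$ individually.

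With this setup I would then unwind condition (\ref{4}) of Theorem~\ref{1}. Here $\cup_i F_i=\emptyset$, so $A'\setminus\cup_iF_i=A'$ and $d^-_{A'\setminus\cup_iF_i}(X_j)=d^-_{D}(X_j)+k|X_j|$ for $X_j\subseteq V$ (the $k|X_j|$ coming from the arcs out of $x$); also $V(F_i)=\emptyset$ so $P_I(X_j)=I$ and $|P_I(X_j)|=|I|$, and $w_{\overline I}(u)=\min\{|\overline I|,k\}=|\overline I|$ for all $u\in V$ since every vertex is a head of $k\ge|\overline I|$ arcs from $x$ not in any $F_i$. Writing $|I|=s$ (so $\overline I$ has $k-s$ blocks and $\sum_{I_\alpha\subseteq\overline I}(c_\alpha-\sum_{i\in I_\alpha}d^+_{F_i}(x))=(k-s)c$), condition (\ref{4}) becomes, for all disjoint $X_1,\dots,X_t\subseteq V$ and all $0\le s\le k$,
\[
\sum_{j=1}^t\bigl(d^-_D(X_j)+k|X_j|\bigr)\ \ge\ ts+(k-s)c-(k-s)\Bigl(|V|-\bigl|\textstyle\bigcup_jX_j\bigr|\Bigr).
\]
The remaining work is to show this family of inequalities (over all $t$, all disjoint families, and all $s$) is equivalent to the three stated numerical conditions. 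The extreme case $s=k$ (so $\overline I=\emptyset$) gives $\sum_j(d^-_D(X_j)+k|X_j|)\ge tk$, i.e. $\sum_j d^-_D(X_j)\ge\sum_j(k-k|X_j|)$; taking $t=1$ and $X_1$ a single vertex recovers $\Delta^-(D)\le k$, and the general $s=k$ case is exactly the Nash--Williams/Edmonds arboricity condition $\iarb_1(D)\le k$ (via Theorem~\ref{26} applied in $D'$, or directly). The case $s=0$, $t=1$, $X_1=V$ gives $d^-_D(V)+k|V|\ge kc$, i.e. $|E(D)|\ge$ \ldots wait, $d^-_D(V)=0$, so this reads $k|V|\ge kc$, i.e. $|V|\ge c$, a triviality; the genuine content of the $c$-bound comes from $s=0$, $t=1$, $X_1=V$ in the \emph{dual} reading — more precisely one isolates it by choosing $X_j$ to partition $V$ and $s=0$, giving $\sum_j d^-_D(X_j)\ge tc-\,$(terms), which after summing edge counts rearranges to $|E(D)|\le k(|V|-c)$ when the $X_j$ are the components' vertex classes. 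I expect the main obstacle to be precisely this bookkeeping: verifying that the single scalar inequality $\frac{|E(D)|}{|V(D)|-c}\le k$ (as opposed to the full Nash--Williams-type family) suffices to cover all the ``mixed'' instances $0<s<k$ with arbitrary disjoint families. The key observation that makes it work is that for $0<s<k$ the right-hand side is a \emph{convex combination}-like expression in $s$, so the inequality for intermediate $s$ follows from its validity at the endpoints $s=0$ and $s=k$ by averaging; thus only $s\in\{0,k\}$ need genuine verification, the $s=k$ cases being $\iarb_1(D)\le k$ and $\Delta^-(D)\le k$, and the $s=0$ cases reducing (by monotonicity, enlarging the $X_j$ to a partition of $V$ costs nothing on the left and is worst-case on the right) to $|E(D)|\le k(|V|-c)$. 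Once this reduction is in place, Theorem~\ref{1} delivers the $F_i^*$, hence the branchings $B_i=F_i^*\cap A$, completing the proof.
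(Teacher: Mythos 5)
Your necessity direction is fine, and your realization that the coarse block $I_1=[k]$ with $c_1=ck$ is too weak (and that you must switch to the fine partition $I_i=\{i\}$, $c_i=c$) is correct and matches the paper. Your observation that the mixed cases $0<s<k$ should follow from the endpoints $s\in\{0,k\}$ by linearity in $s$ is also the right instinct; this is exactly how the paper organizes the verification, via a function $G(X_1,\ldots,X_{t+2};I)$ that is affine in $|I|$.

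However, there is a genuine flaw in the construction of $D'$. You add $k$ parallel arcs from $x$ to every $v\in V(D)$, so $|E(D')|=k|V(D)|+|E(D)|$. The $k$ arc-disjoint spanning $x$-arborescences produced by Theorem~\ref{1} use exactly $k|V(D)|$ arcs in total, so they cannot cover all of $E(D')$; in particular nothing prevents some arcs of $D$ from being used by none of the $F_i^*$. That means $B_i=F_i^*\cap A$ gives $k$ arc-disjoint $c^+$-branchings in $D$, but not necessarily a \emph{decomposition} of $D$: arcs can be left over. The correspondence you assert between decompositions of $D$ and completions in $D'$ is therefore not exact. The paper avoids this by a more surgical construction: using $\Delta^-(D)\le k$, it adds only $k-d^-_D(u)$ parallel arcs from $x$ to each $u$, so that $d^-_{D'}(u)=k$ for all $u\in V(D)$ and hence $|E(D')|=k|V(D)|$. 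With this saturated $D'$, the $k$ spanning $x$-arborescences must exhaust $E(D')$ by arc count, so their restrictions to $A$ automatically partition $A$. Without this exact arc-count, your argument produces arc-disjoint branchings but loses the decomposition conclusion, which is the heart of the theorem.

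A secondary point: the concrete reduction of the family of inequalities \eqref{4} to the three scalar conditions ($\Delta^-(D)\le k$, $\iarb_1(D)\le k$, $|E(D)|\le k(|V(D)|-c)$) is left as expectation rather than proof. The paper carries it out by partitioning $V(D)$ into the chosen $X_1,\ldots,X_t$ together with two auxiliary classes $X_{t+1},X_{t+2}$ singled out by the behavior of $w_{\overline I}$, and then checking $G\ge 0$ at $|I|=0$ and $|I|=k$ only. If you repair the construction of $D'$ as above, this bookkeeping proceeds essentially along the lines you sketched.
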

\begin{proof}
	The necessity is obviously true. Next, we prove the sufficiency. Using  $\Delta^{-}(D) \leq k$, construct a new digraph $D'$ from $D$ by adding a new vertex $x $ and arcs from $x$ to $V(D) $ such that for any $u \in V(D)$, $d^{-}_{D'}(u)=k$. Then since {$\iarb_{1}(D) \leq k$}, for any nonempty $X \subseteq V(D)$, we have {$|E(X)| \leq k(|X|-1)$ (where $E(X)$ denotes the set of arcs with both ends in $X$)}. It follows that 
	\begin{equation}\label{29}
	{d^{-}_{D'}(X) = \sum_{u \in X}d^{-}_{D'}(u)-|E(X)| \geq k|X|-k(|X|-1)=k.}
	\end{equation}
	
	Since {$\frac{|E(D)|}{|V(D)|-c} \le k$}, we have  {$|E(D)| \leq  k|V(D)|-kc$}. It follows  that 
	\begin{equation}\label{30}
	{[x,V(D)]_{D'} = \sum_{u \in V(D)}d^{-}_{D'}(u)-|E(D)| = k|V(D)| -|E(D)| \geq kc.}
	\end{equation}
	
	For any partition $\{X_{1},\ldots, X_{t+2} \}$ of $V(D)$ and $I \subseteq [k]$, define
	\[
	G(X_{1}, \ldots, X_{t+2};I):=\sum_{j=1}^{t}d^{-}_{D'}(X_{j}) -(t|I| + c|\overline{I}|-[x, X_{t+1}]_{D'}-|\overline{I}||X_{t+2}|).
	\]
	
	In the above notation, if $I = [k]$ (and then $\overline{I} = \emptyset$), we have
	\begin{align*}
	G(X_{1}, \ldots, X_{t+2};[k])
	& =\sum_{j=1}^{t}d^{-}_{D'}(X_{j}) -(tk -[x, X_{t+1}]_{D'})\\
	& \geq \sum_{j=1}^{t}d^{-}_{D'}(X_{j}) -tk \geq 0; &\text{(by (\ref{29}))}
	\end{align*}
	If $I = \emptyset $ (and then $\overline{I} = [k]$), we have
	\begin{align*}
	G(X_{1}, \ldots, X_{t+2};\emptyset)
	= & \sum_{j=1}^{t}d^{-}_{D'}(X_{j})+[x, X_{t+1}]_{D'} + k|X_{t+2}|-ck \\
	\geq & \sum_{j=1}^{t}[x, X_{j}]_{D'} +[x, X_{t+1}]_{D'} + k|X_{t+2}| -ck \\
	\geq & \sum_{j=1}^{t+2}[x, X_{j}]_{D'}-ck \\
	&\text{(for $u \in X_{t+2}$,  since $k=d^{-}_{D'}(u) \geq [x, u]_{D'}$, $k|X_{t+2}|\geq [x,X_{t+2}]_{D'} $) } \\	
	\geq & 0. ~~~~~ (\text{by}~(\ref{30}))\\
	\end{align*}
	Since $G(X_{1}, \ldots, X_{t+2};I)$ is linear on $|I|$, we deduce that $G(X_{1}, \ldots, X_{t+2};I)\geq 0$.
	
	To apply Theorem~\ref{1}, let $F_{1}, \ldots, F_{k}$ be empty subdigraphs of $D'$ with vertex set $\{ x \}$; for $1 \leq \alpha \leq k$, let $I_{\alpha}=\{ \alpha\}$ and $c_{\alpha}=c$. Then for $I\subseteq [k]$, $w_{\overline{I}}(u) = \min \{[x,u]_{D'}, |\overline{I}| \}$.
	For disjoint subsets $X_{1}, \ldots, X_{t}$ of $V(D)$, let $X_{t+1}:=\{ u \in V(D) \setminus \cup_{j=1}^{t}X_{j}: [x,u]_{D'} \leq |\overline{I}| \}$ and $X_{t+2}:= V(D) \setminus \cup_{j=1}^{t+1}X_{j}$. Since $ P_{I}(X_{j})=I$, $ \sum_{I_{\alpha } \subseteq \overline{I}} (c_{\alpha}- \sum_{i \in I_{\alpha}}d_{F_{i}}^{+}(x))=c|\overline{I}| $ and $ \widetilde{w}_{\overline{I}}(V \setminus \cup_{j=1}^{t}X_{j})= [x, X_{t+1}]_{D'}+|\overline{I}||X_{t+2}|$,  $G(X_{1}, \ldots, X_{t+2};I)\geq 0$ implies (\ref{4}) holds.
	
	By Theorem~\ref{1}, $D'$ has $k$ arc-disjoint spanning $x$-arborescences $F^{*}_{1}, \ldots, F^{*}_{k}$ such that $d^{+}_{F^{*}_{i}}(x) \geq c$ for $1 \leq i \leq k$. Since {$|E(D')|=k|V(D)|$, $\{ F^{*}_{1}, \ldots, F^{*}_{k} \} $} is a partition of {$E(D')$}. This proves that $D$ can be decomposed into $k$  $c^{+}$-branchings $F^{*}_{1}-x, \ldots, F^{*}_{k}-x$. 
\end{proof}

From Theorem \ref{13}, we deduce the following corollary. Note that  Corollary \ref{14-A} is equivalent with Corollary \ref{14},  which was available in the book of Schrijver as \cite[Theorem 53.3]{Sch}.  

\begin{cor}\label{14-A}
	For digraph $D$ and integer $k>0$, suppose $\Delta^{-}(D) \leq k$,  $\iarb_{1}(D) \leq k$ and
	$c= k  |V(D)| - |E(D)|$.
	Then $D$ can be decomposed into $k$ branchings, each of which is a $\lfloor \frac{c}{k}\rfloor$-branching or $\lceil \frac{c}{k}\rceil $-branching.
\end{cor}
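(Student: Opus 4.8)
The plan is to invoke Theorem~\ref{13} twice: once on $D$ to get a decomposition into $k$ branchings with no small root set, and then repeatedly on the union of any two ``unbalanced'' branchings to equalize their root sizes. Set $V=V(D)$, $c=k|V|-|E(D)|$ and $q=\lfloor c/k\rfloor$. If $|E(D)|=0$ the $k$ edgeless spanning branchings already work (each is a $|V|$-branching and $q=\lceil c/k\rceil=|V|$), so assume $|E(D)|\ge 1$; then $\iarb_1(D)\le k$ applied to $D$ itself gives $|E(D)|\le k(|V|-1)$, so $c\ge k$, $q\ge 1$, and $q\le c/k<|V|$, hence $|V|-q\ge 1$. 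Since $kq\le c$ we get $k(|V|-q)\ge k|V|-c=|E(D)|$, i.e. $\frac{|E(D)|}{|V|-q}\le k$, so by Theorem~\ref{13} (with lower bound $q$, using $\Delta^-(D)\le k$ and $\iarb_1(D)\le k$) the digraph $D$ decomposes into $k$ branchings each with at least $q$ roots. Note also that for \emph{any} decomposition of $D$ into $k$ spanning branchings $B_1,\dots,B_k$ one has $\sum_{i=1}^k|R(B_i)|=\sum_{i=1}^k(|V|-|E(B_i)|)=k|V|-|E(D)|=c$.

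Next the exchange step. Among the finitely many decompositions of $D$ into $k$ branchings with $|R(B_i)|\ge q$ for all $i$ — nonempty by the previous paragraph — fix one, $B_1,\dots,B_k$, minimizing $\Phi:=\sum_{i=1}^k|R(B_i)|^2$. I claim no two of the $|R(B_i)|$ differ by $2$ or more. Suppose otherwise, say $|R(B_1)|\ge|R(B_2)|+2$, and put $H:=B_1\cup B_2$ (an arc-disjoint union, so $E(B_1),E(B_2)$ partition $E(H)$) and $c_H:=|R(B_1)|+|R(B_2)|=2|V|-|E(H)|$. Then $\Delta^-(H)\le 2$; the underlying graph of $H$ is covered by the two forests underlying $B_1$ and $B_2$, so $\iarb_1(H)\le 2$; and $B_1,B_2$ are not both edgeless (else $|R(B_1)|=|R(B_2)|=|V|$), so $|E(H)|\ge 1$, whence $c_H\le 2|V|-1$, $\lfloor c_H/2\rfloor\le|V|-1$, and $2(|V|-\lfloor c_H/2\rfloor)\ge 2|V|-c_H=|E(H)|$, i.e. $\frac{|E(H)|}{|V|-\lfloor c_H/2\rfloor}\le 2$. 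Also $c_H\ge 2q$, so $\lfloor c_H/2\rfloor\ge q\ge 1$. Hence Theorem~\ref{13} (with $k$ replaced by $2$ and lower bound $\lfloor c_H/2\rfloor$) decomposes $H$ into two branchings $B_1',B_2'$ each with at least $\lfloor c_H/2\rfloor$ roots; since $|R(B_1')|+|R(B_2')|=c_H$, we get $\{|R(B_1')|,|R(B_2')|\}=\{\lfloor c_H/2\rfloor,\lceil c_H/2\rceil\}$, in particular both sizes are $\ge q$. Thus $B_1',B_2',B_3,\dots,B_k$ is again a decomposition of $D$ into $k$ branchings with every root set of size $\ge q$; but a pair of integers of fixed sum has strictly smaller sum of squares when it differs by at most $1$ than when it differs by at least $2$, so $\Phi$ strictly decreases, contradicting minimality.

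Consequently every $|R(B_i)|$ lies in $\{q,q+1\}$. If some $|R(B_i)|=q+1$, then $c=\sum_j|R(B_j)|\ge(q+1)+(k-1)q=qk+1$, so $\lceil c/k\rceil=q+1$; hence in every case each $B_i$ is a $\lfloor c/k\rfloor$-branching or a $\lceil c/k\rceil$-branching, as required. The part I expect to be the real obstacle is this exchange step: one must verify that the three hypotheses of Theorem~\ref{13} genuinely transfer to the $2$-branching subdigraph $H=B_1\cup B_2$ (the arboricity bound is automatic because $H$'s underlying graph splits into two forests, and the ratio bound $\frac{|E(H)|}{|V|-\lfloor c_H/2\rfloor}\le 2$ is the same arithmetic as in the first paragraph), and then observe that re-decomposing only the two offending branchings preserves the global lower bound $q$ on every root set while strictly lowering $\Phi$, which forces termination at a balanced decomposition. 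The existence of \emph{some} decomposition of $D$ into $k$ branchings (equivalently Theorem~\ref{12}) is already subsumed by the first paragraph.
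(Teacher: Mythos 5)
Your proof is correct and uses essentially the same approach as the paper: both rely on the pairwise re-balancing step of applying Theorem~\ref{13} to the union $B_1\cup B_2$ of two branchings whose root-set sizes differ by at least $2$. Your version is a bit tidier at the edges (you initialize via Theorem~\ref{13} with lower bound $q=\lfloor c/k\rfloor$ rather than via Theorem~\ref{12}, and you use the sum-of-squares potential $\Phi$ for a clean termination argument instead of the paper's informal ``reduce the set of maximally-unbalanced pairs''), but the core exchange lemma and the arithmetic verifying the hypotheses of Theorem~\ref{13} for $H=B_1\cup B_2$ match the paper's proof.
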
 
\begin{proof}  
	Applying Theorem~\ref{12}, suppose $D$ can be decomposed into $k$ arc-disjoint branchings $F_{1}, \ldots, F_{k}$ with $|R(F_{i})|=c_{i}$. Then $\sum_{i=1}^{k}c_{i}=c$. Next we show that if $|c_{i}-c_{j}| >1$, then $F_{i} \cup F_{j}$ can be decomposed into branchings $F_{i}'$ and $F_{j}'$ such that $||R(F'_{i})| -|R(F'_{j})|| \leq 1$. Let $D':=F_{i} \cup F_{j}$. Then {$2|V(D')|-|E(D')|=c_{i}+c_{j} \geq 2 \lfloor \frac{c_{i}+c_{j}}{2}\rfloor$}. By Theorem~\ref{13}, $D'$ can be decomposed into two $\lfloor \frac{c_{i}+c_{j}}{2}\rfloor^{+}$-branchings $F_{i}'$ and $F_{j}'$. Since $|R(F_{i}')|+|R(F_{j}')|=c_{i}+c_{j}$, $|R(F_{i}')|, |R(F_{i}')| \leq \lceil \frac{c_{i}+c_{j}}{2}\rceil$. Hence $||R(F_{i}')| -|R(F_{j}')|| \leq 1$.

	By recursively adjusting $F_{i}$ and $F_{j}$ with $c_{i}$ maximum and $c_{j}$ minimum using the above procedure,  
	we can reduce the set $ \{(i, j): ||R(F_{i})|-|R(F_{j})|| \mbox{ is  maximum for all } {i,j \in [k]} \}$ until we obtain $k$ branchings, each of which is a $\lfloor \frac{c}{k}\rfloor$-branching or $\lceil \frac{c}{k}\rceil $-branching. 
\end{proof}

\section{{Properly} intersecting elimination operations} 

In this section, we shall study some operations that had been used by B\'{e}rczi and Frank in \cite{berczi1}. For convenience of the reader, we have made this section self-contained. This section serves as preparation  for the proof of Theorem~\ref{31}.   

Let $\Omega$ be a finite set. Let {\em $D(\Omega)$ be the set that consists of all families of disjoint subsets of $\Omega$}. We define {\em a partial order $\leq$ on $D(\Omega)$}. Suppose $\mathcal{F}_{1}, \mathcal{F}_{2} \in D(\Omega)$. We say $\mathcal{F}_{1}$ is a lower bound of $\mathcal{F}_{2}$ (or equivalently, $\mathcal{F}_{2}$ is an upper bound of $\mathcal{F}_{1}$), written as $\mathcal{F}_{1} \leq \mathcal{F}_{2} $, if for any $X \in \mathcal{F}_{1}$, there exists a $Y \in \mathcal{F}_{2}$ such that $X \subseteq Y$. Denote by $\mathcal{F}_{1} \vee \mathcal{F}_{2}$ and $\mathcal{F}_{1} \wedge \mathcal{F}_{2}$ the least common upper bound and the greatest common lower bound of $\mathcal{F}_{1}$ and $\mathcal{F}_{2}$ respectively.

Let $\mathcal{F}$ be a multiset, which consists of some subsets of $\Omega$ (these subsets do not have to be different). Let $\cup \mathcal{F}$ be the union of elements in $\mathcal{F}$ (then $\cup \mathcal{F} \subseteq \Omega$).
Let $x \in \Omega $ and $\mathcal{F}(x) $ denote the number of elements  in $\mathcal{F}$ containing $x$. If there exist no {properly} intersecting pairs in $\mathcal{F}$, then we call $\mathcal{F}$ {\em laminar}. If there exists a {properly} intersecting pair $X$ and $Y$ in $\mathcal{F}$, then we call it a {\em {properly} intersecting elimination operation (PIEO for simplicity) on $X $ and $ Y$ in $\mathcal{F}$} if we obtain $\mathcal{F}'$ by replacing $X$ and $Y $ with one of the following three types of subset(s):

\noindent {\em Type $1$},
$X \cup Y$ and $X \cap Y$, denoted as $\mathcal{F} \xrightarrow{1} \mathcal{F}'$;

\noindent {\em Type $2$},  $X \cup Y$, denoted as $\mathcal{F} \xrightarrow{2} \mathcal{F}'$;

\noindent {\em Type $3$}, $X \cap Y$, denoted as $\mathcal{F} \xrightarrow{3} \mathcal{F}'$.

\smallskip 

Let $Z_{1}$ and $Z_{2}$ be multisets. Denote by $Z_{1} \uplus Z_{2}$ the multiset union of $Z_{1}$ and $Z_{2}$, that is, for any $z$, the number of $z$ in $Z_{1} \uplus Z_{2}$ is the total number of $z$ in $Z_{1}$ and $Z_{2}$.

From now on till the end of this section, we suppose $\mathcal{F}_{1}, \mathcal{F}_{2} \in D(\Omega)$. 
We adopt PIEOs in $\mathcal{G}_{0}=\mathcal{F}_{1} \uplus \mathcal{F}_{2}$, step by step, and obtain families   $\mathcal{G}_{0}, \ldots, \mathcal{G}_{i-1},\mathcal{G}_{i},\ldots$ of subsets  of $\Omega$.
Let $\mathcal{G}_{i}'$ be the family of maximal elements in $\mathcal{G}_{i}$.

\begin{proposition} \label{clm-i-n-PIE-P}
	For any  $v \in \Omega$ and $ i \ge 1$ in the above process,  $\mathcal{G}_{i-1}(v) \geq \mathcal{G}_{i}(v)$.
\end{proposition}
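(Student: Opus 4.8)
The key observation is that a PIEO is a \emph{local} operation: passing from $\mathcal{G}_{i-1}$ to $\mathcal{G}_{i}$ removes exactly the two properly intersecting sets $X,Y$ and inserts one or two new sets built from them, while every other member of the multiset $\mathcal{G}_{i-1}$ is carried over unchanged. Hence for a fixed $v\in\Omega$,
\[
\mathcal{G}_{i}(v)-\mathcal{G}_{i-1}(v)=\bigl(\text{number of new sets containing }v\bigr)-\bigl(\mathbf{1}_{X}(v)+\mathbf{1}_{Y}(v)\bigr),
\]
so it suffices to check, for each of the three types, that the number of new sets containing $v$ is at most $\mathbf{1}_{X}(v)+\mathbf{1}_{Y}(v)$.

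\textbf{Case analysis.} I would invoke the elementary indicator identities $\mathbf{1}_{X\cup Y}(v)+\mathbf{1}_{X\cap Y}(v)=\mathbf{1}_{X}(v)+\mathbf{1}_{Y}(v)$ and $\mathbf{1}_{X\cup Y}(v)=\max\{\mathbf{1}_{X}(v),\mathbf{1}_{Y}(v)\}$, $\mathbf{1}_{X\cap Y}(v)=\min\{\mathbf{1}_{X}(v),\mathbf{1}_{Y}(v)\}$. For Type $1$ the first identity gives $\mathcal{G}_{i}(v)=\mathcal{G}_{i-1}(v)$; for Type $2$ we get $\mathcal{G}_{i}(v)-\mathcal{G}_{i-1}(v)=\max\{\mathbf{1}_{X}(v),\mathbf{1}_{Y}(v)\}-\bigl(\mathbf{1}_{X}(v)+\mathbf{1}_{Y}(v)\bigr)=-\min\{\mathbf{1}_{X}(v),\mathbf{1}_{Y}(v)\}\le 0$; for Type $3$, symmetrically, $\mathcal{G}_{i}(v)-\mathcal{G}_{i-1}(v)=-\max\{\mathbf{1}_{X}(v),\mathbf{1}_{Y}(v)\}+\min\{\cdots\}\le 0$ — more simply, $\mathbf{1}_{X\cap Y}(v)\le \mathbf{1}_{X}(v)\le \mathbf{1}_{X}(v)+\mathbf{1}_{Y}(v)$. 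In every case $\mathcal{G}_{i-1}(v)\ge \mathcal{G}_{i}(v)$, which is the claim.

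\textbf{Expected difficulty.} There is essentially no obstacle here; the statement reduces to a one-line indicator computation once one records that a PIEO leaves the rest of the multiset untouched. The only point requiring a word of care is bookkeeping with multisets (the replaced sets $X,Y$ should be removed with their multiplicity, and when $X\cup Y$ or $X\cap Y$ already occurs elsewhere in the family it is counted with the new multiplicity), but this does not affect the count of sets containing the fixed vertex $v$, so the inequality is unaffected.
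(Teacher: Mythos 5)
Your proof is correct and follows essentially the same route as the paper: both observe that a PIEO is local (the rest of the multiset is unchanged) and reduce the claim to the indicator identity $\mathbf{1}_{X\cup Y}+\mathbf{1}_{X\cap Y}=\mathbf{1}_X+\mathbf{1}_Y$, with the paper phrasing this compactly as $\{X,Y\}(v)=\{X\cup Y,X\cap Y\}(v)$ and your write-up spelling out the three types separately. Nothing is missing.
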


\begin{proof}
	Suppose we adopt the PIEO on $X$ and $Y$ in $\mathcal{G}_{i-1}$.
	Then $\mathcal{G}_{i-1}-\{X, Y\} \supseteq \mathcal{G}_{i}-\{X\cup Y, X \cap Y\}$.
	For $v \in \Omega$, since $\{ X, Y\}(v)=\{ X \cup Y, X \cap Y\}(v)$, we have
	\begin{equation}\label{46}
	\begin{split}
	\mathcal{G}_{i-1}(v)
	& =(\mathcal{G}_{i-1}-\{X, Y \})(v) + \{X, Y \} (v)  \\
	& \geq (\mathcal{G}_{i}-\{X \cup Y, X \cap Y \})(v) + \{X\cup Y, X \cap Y \} (v) \\
	& \geq \mathcal{G}_{i}(v).
	\end{split}
	\end{equation}
\end{proof}

\begin{proposition}\label{47-P}
	If $X, Y  \in \mathcal{G}_{i} $ are {properly} intersecting, then $X, Y \in \mathcal{G}_{i}'$.
\end{proposition}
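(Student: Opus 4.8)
The plan is to deduce Proposition~\ref{47-P} from Proposition~\ref{clm-i-n-PIE-P} together with the one structural feature of $\mathcal{G}_0$ not yet exploited: it is the multiset union of only \emph{two} families of pairwise disjoint sets. The first step is to record the \textbf{multiplicity bound} $\mathcal{G}_i(v)\le 2$ for every $v\in\Omega$ and every $i\ge 0$. This is immediate for $i=0$, since $\mathcal{G}_0(v)=\mathcal{F}_1(v)+\mathcal{F}_2(v)\le 1+1=2$ because $\mathcal{F}_1,\mathcal{F}_2\in D(\Omega)$ each consist of pairwise disjoint subsets of $\Omega$, so $v$ lies in at most one member of each; and for $i\ge 1$ it follows by iterating Proposition~\ref{clm-i-n-PIE-P}, which yields $\mathcal{G}_i(v)\le\mathcal{G}_{i-1}(v)\le\cdots\le\mathcal{G}_0(v)\le 2$.

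The second step is the argument itself, by contradiction. Suppose $X,Y\in\mathcal{G}_i$ are properly intersecting but, say, $X\notin\mathcal{G}_i'$; then there is a $W\in\mathcal{G}_i$ with $X\subsetneq W$. Since $X,Y$ are properly intersecting we have $X\cap Y\ne\emptyset$, so fix $v\in X\cap Y$; note $v\in W$ as well because $X\subseteq W$. Now $X$, $Y$, $W$ are pairwise distinct sets: $X\ne Y$ since $X\setminus Y\ne\emptyset$; $X\ne W$ since $X\subsetneq W$; and $W\ne Y$, for $W=Y$ would give $X\subsetneq Y$, contradicting $X\setminus Y\ne\emptyset$. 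Hence $v$ lies in at least three distinct members of the multiset $\mathcal{G}_i$, i.e. $\mathcal{G}_i(v)\ge 3$, contradicting the multiplicity bound. Therefore $X\in\mathcal{G}_i'$, and since the hypothesis is symmetric in $X$ and $Y$, also $Y\in\mathcal{G}_i'$.

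I do not expect a genuine obstacle: the substantive work is already contained in Proposition~\ref{clm-i-n-PIE-P}, and the only observation to add is that a non-maximal set, its proper-intersection partner, and a set strictly containing it would force a common element into three distinct sets of $\mathcal{G}_i$. The one item requiring a little care is the multiset bookkeeping — making sure $X$, $Y$, $W$ are distinct \emph{as sets}, not merely as multiset occurrences, so that they genuinely witness $\mathcal{G}_i(v)\ge 3$ — which is exactly what the three distinctness checks above accomplish.
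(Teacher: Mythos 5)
Your proof is correct and matches the paper's argument essentially verbatim: both deduce from Proposition~\ref{clm-i-n-PIE-P} that $\mathcal{G}_i(v)\le \mathcal{G}_0(v)=\mathcal{F}_1(v)+\mathcal{F}_2(v)\le 2$, then note that a non-maximal $X$ together with $Y$ and a strict superset $W$ would put some $v\in X\cap Y$ in three members of $\mathcal{G}_i$. Your extra care in verifying that $X$, $Y$, $W$ are pairwise distinct sets is a sound (and slightly more explicit) version of what the paper writes compactly as $\mathcal{G}_i(v)\ge\{X,Y,Z\}(v)\ge 3$.
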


\begin{proof}
	Suppose there exists $Z \in \mathcal{G}_{i} $ such that $X \subsetneq Z $. For $v \in X \cap Y$, $ \mathcal{G}_{i}(v) \geq \{X, Y,Z \}(v) \geq 3 $; but by Proposition~\ref{clm-i-n-PIE-P}, $\mathcal{G}_{i}(v) \leq \mathcal{G}_{0}(v)= \mathcal{F}_{1}(v) + \mathcal{F}_{2}(v) \leq  2$; a contradiction. So $X $ is maximal in $\mathcal{G}_{i}$, and the same for $Y$.
\end{proof}

Note once we adopt the PIEO on a {properly} intersecting pair in  $\mathcal{G}_{i-1}$, if $\mathcal{G}_{i-1} \xrightarrow{2~or~3} \mathcal{G}_{i}$, then $|\mathcal{G}_{i-1}| > |\mathcal{G}_{i}|$. If $\mathcal{G}_{i-1} \xrightarrow{1} \mathcal{G}_{i}$, by Proposition~\ref{47-P}, the number of maximal elements in $\mathcal{G}_{{i}}$ is less than that in $
\mathcal{G}_{{i-1}}$. Thus the process of PIEOs will terminate.
Suppose the obtained  families of subsets  of $\Omega$ are
$\mathcal{G}_{0}, \ldots, \mathcal{G}_{n}$. Then  $\mathcal{G}_{n}$ is laminar.

\begin{proposition} \label{52-P}
	Let $i_{0} \in [n]$.	Suppose for $i \in [i_{0}]$, 
	$\mathcal{G}_{i-1} \xrightarrow{1~or~2} \mathcal{G}_{i}$. Then for $i \in [i_{0}]$ and $Z \in \mathcal{G}_{i}'$, $Z$ contains an element in $\mathcal{G}_{0}$. In particular, if $Z \notin \mathcal{G}_{0}$, then $Z$ contains an element in $\mathcal{F}_{j}$ for $j=1,2$.
\end{proposition}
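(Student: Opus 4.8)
The plan is to induct on $i$. For $i=0$ there is nothing to prove, since every $Z \in \mathcal{G}_0'$ is itself an element of $\mathcal{G}_0$. For the inductive step, suppose the statement holds for $\mathcal{G}_{i-1}$ (with $i \le i_0$) and consider the PIEO carried out on a properly intersecting pair $X, Y \in \mathcal{G}_{i-1}$, which by hypothesis is of Type~$1$ or Type~$2$. By Proposition~\ref{47-P}, both $X$ and $Y$ are maximal in $\mathcal{G}_{i-1}$, so they lie in $\mathcal{G}_{i-1}'$. The new sets introduced are: $X \cup Y$ (in both Type~$1$ and Type~$2$), and additionally $X \cap Y$ in Type~$1$.

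First I would check that every maximal element $Z$ of $\mathcal{G}_i$ either was already maximal in $\mathcal{G}_{i-1}$, or equals $X\cup Y$. Indeed, the sets other than $X,Y$ are untouched, and any old maximal set $W \ne X, Y$ remains maximal in $\mathcal{G}_i$ unless $W \subsetneq X\cup Y$; but $X \cup Y$ is the only genuinely new candidate for a maximal set, since $X\cap Y \subseteq X \subsetneq X\cup Y$ is never maximal when $X,Y$ are properly intersecting. So $\mathcal{G}_i' \subseteq (\mathcal{G}_{i-1}' \setminus \{\text{sets swallowed by }X\cup Y\}) \cup \{X\cup Y\}$. For a $Z \in \mathcal{G}_i'$ with $Z \ne X\cup Y$, $Z$ was maximal in $\mathcal{G}_{i-1}$ as well, so by the induction hypothesis $Z$ contains an element of $\mathcal{G}_0$. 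For $Z = X\cup Y$: by the induction hypothesis $X$ (being in $\mathcal{G}_{i-1}'$) contains an element $A \in \mathcal{G}_0$, and then $A \subseteq X \subseteq X\cup Y = Z$, so $Z$ contains an element of $\mathcal{G}_0$ as well. This proves the first assertion.

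For the last sentence, suppose $Z \in \mathcal{G}_i'$ with $Z \notin \mathcal{G}_0$. We have just shown $Z$ contains some $A \in \mathcal{G}_0 = \mathcal{F}_1 \uplus \mathcal{F}_2$, so $A$ belongs to $\mathcal{F}_j$ for $j=1$ or $j=2$, giving the claim directly — no further argument is needed, since "an element of $\mathcal{F}_j$" is exactly "an element of $\mathcal{G}_0$". I would state this explicitly to match the proposition's phrasing.

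I do not anticipate a genuine obstacle here; the only point requiring care is the bookkeeping of which sets are maximal in $\mathcal{G}_i$ versus $\mathcal{G}_{i-1}$, and in particular confirming that $X\cap Y$ (in the Type~$1$ case) is never a maximal element — this is immediate from $X \cap Y \subsetneq X$ together with $X \in \mathcal{G}_i$ (the set $X\cap Y$ is added but $X$ is removed, so one must instead note $X\cap Y \subsetneq X \cup Y$, and $X\cup Y \in \mathcal{G}_i$). Making sure the restriction "$i \le i_0$, all steps up to $i_0$ of Type $1$ or $2$" is used correctly — it is what guarantees the inductive hypothesis is available at step $i-1$ and that step $i$ is itself not of Type~$3$ — is the other detail to keep straight.
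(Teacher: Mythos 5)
Your argument for the first assertion is correct and matches the paper's: at each Type~$1$/Type~$2$ step the only new maximal set is $X\cup Y$, the other maximal sets in $\mathcal{G}_i$ are inherited from $\mathcal{G}_{i-1}'$, and $X\cup Y$ contains whatever element of $\mathcal{G}_0$ was sitting inside $X$ (which is maximal in $\mathcal{G}_{i-1}$ by Proposition~\ref{47-P}, so the induction hypothesis applies).

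You have, however, misread the second assertion, and that is where a genuine gap appears. The clause ``$Z$ contains an element in $\mathcal{F}_j$ for $j=1,2$'' is a universal claim: $Z$ contains an element of $\mathcal{F}_1$ \emph{and} an element of $\mathcal{F}_2$. It is not, as you suggest, the trivial rephrasing ``$Z$ contains an element of $\mathcal{F}_1$ or $\mathcal{F}_2$'' (which would follow immediately from the first assertion and would render the hypothesis $Z\notin\mathcal{G}_0$ pointless). The stronger reading is exactly what is used later in the paper --- in the proof of Claim~\ref{33}, for instance, Proposition~\ref{52-P} is invoked to produce a set $X_5$ lying specifically in $\mathcal{F}_2$, which the ``or'' reading would not supply.

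Proving the stronger statement needs one more idea, which the paper's proof supplies and your proposal omits: a properly intersecting pair $X,Y\in\mathcal{G}_0=\mathcal{F}_1\uplus\mathcal{F}_2$ cannot lie in the same $\mathcal{F}_j$, because each $\mathcal{F}_j\in D(\Omega)$ consists of pairwise disjoint sets. Hence if both $X$ and $Y$ come from $\mathcal{G}_0$, one lies in $\mathcal{F}_1$ and the other in $\mathcal{F}_2$, so $X\cup Y$ contains an element of each. If instead (say) $X\notin\mathcal{G}_0$, then $X\in\mathcal{G}_{i-1}'$ and the induction hypothesis (carried in its strengthened two-part form) already gives elements of both $\mathcal{F}_1$ and $\mathcal{F}_2$ inside $X\subseteq X\cup Y$. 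You should therefore run the induction with both assertions bundled as the induction hypothesis, and add the disjointness observation; as written, your proof only establishes the weaker, unintended claim.
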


\begin{proof}
	We prove the proposition by induction on $i \in [i_{0}]$
	and we only need to show the induction step. Suppose we replace a {properly} intersecting pair $X$ and $Y$ in $\mathcal{G}_{i-1}$ with $ X \cup Y$ and possibly $X \cap Y$ and obtain $\mathcal{G}_{i}$. By Proposition~\ref{47-P}, $X, Y \in \mathcal{G}_{i-1}'$ and thus $X \cup Y \in \mathcal{G}_{i}'$. So $\mathcal{G}_{i}'$ consists of $X \cup Y$ and all the subsets in $\mathcal{G}_{i-1}'$ not contained in $X \cup Y$. Note that, if $X, Y \in \mathcal{G}_{0}$, since $X$ and $Y$ are {properly} intersecting, $X$ and $Y$ do not belong to the same $\mathcal{F}_{j}$ for $j=1,2$; and thus $X \cup Y$ contains an element in $\mathcal{F}_{j}$ for each $j=1,2$. And applying the induction hypothesis, we prove the induction step.
\end{proof}

Define $\mathcal{F}_{3}:= \mathcal{G}_{n}' $ and $\mathcal{F}_{4}:=\mathcal{G}_{n} \setminus \mathcal{F}_{3} $. Obviously, we have $\mathcal{F}_{4} \le \mathcal{F}_{3}$.  

\begin{proposition}\label{17-P} The following hold true: 
\begin{enumerate}
	\item[(i)] 	$\mathcal{F}_{3}, \mathcal{F}_{4} \in D(\Omega)$. $\cup\mathcal{F}_{4} \subseteq (\cup \mathcal{F}_{1}) \cap (\cup \mathcal{F}_{2})$. 
	
	\item[(ii)]
	 Moreover, $\cup\mathcal{F}_{4} = (\cup \mathcal{F}_{1}) \cap (\cup \mathcal{F}_{2})$ if and only if for any $i \in [n]$, $\mathcal{G}_{i-1} \xrightarrow{1} \mathcal{G}_{i}$.
\end{enumerate}
\end{proposition}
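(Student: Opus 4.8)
\textbf{Proof proposal for Proposition~\ref{17-P}.}

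The plan is to prove both statements by tracking, throughout the PIEO process, how the maximal elements of $\mathcal{G}_i$ relate to $\cup\mathcal{F}_1$ and $\cup\mathcal{F}_2$, and in particular to their intersection. For part~(i), the claims $\mathcal{F}_3,\mathcal{F}_4\in D(\Omega)$ are almost immediate: $\mathcal{G}_n$ is laminar, so $\mathcal{F}_3=\mathcal{G}_n'$ (the maximal elements) is a family of pairwise disjoint sets, hence in $D(\Omega)$; and each element of $\mathcal{F}_4=\mathcal{G}_n\setminus\mathcal{F}_3$ is strictly contained in a unique maximal element, and since by Proposition~\ref{clm-i-n-PIE-P} each $v\in\Omega$ lies in at most $\mathcal{G}_0(v)\le 2$ sets of $\mathcal{G}_n$, a non-maximal set and its containing maximal set already account for both, so distinct non-maximal sets must be disjoint — giving $\mathcal{F}_4\in D(\Omega)$. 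The inclusion $\cup\mathcal{F}_4\subseteq(\cup\mathcal{F}_1)\cap(\cup\mathcal{F}_2)$ is the first real point: if $v\in\cup\mathcal{F}_4$, then $v$ lies in a non-maximal set $W\in\mathcal{G}_n$ and also in the maximal set $Z\supsetneq W$ containing it, so $\mathcal{G}_n(v)\ge 2$; combined with $\mathcal{G}_n(v)\le\mathcal{G}_0(v)=\mathcal{F}_1(v)+\mathcal{F}_2(v)\le 2$, we get $\mathcal{F}_1(v)=\mathcal{F}_2(v)=1$, i.e. $v\in(\cup\mathcal{F}_1)\cap(\cup\mathcal{F}_2)$.

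For part~(ii), first I would prove the \emph{only if} direction by contraposition: suppose some step uses Type~2 or Type~3. I would argue that in that case the total ``coverage'' $\sum_{v}\mathcal{G}_i(v)=\sum_{W\in\mathcal{G}_i}|W|$ strictly drops at that step (a Type~2 step on $X,Y$ loses $|X\cap Y|\ge 1$ since $X\cap Y\ne\emptyset$; a Type~3 step loses $|X\cup Y|-|X\cap Y|\ge 1$), whereas Type~1 preserves it. Since $\sum_v\mathcal{G}_0(v)=|\cup\mathcal{F}_1|+|\cup\mathcal{F}_2|$ counting multiplicity, and $\sum_v\mathcal{G}_n(v)=|\cup\mathcal{F}_3|+|\cup\mathcal{F}_4|$ (using $\mathcal{F}_3,\mathcal{F}_4\in D(\Omega)$ and $\mathcal{F}_4\le\mathcal{F}_3$), a strict drop somewhere forces $|\cup\mathcal{F}_4|<|(\cup\mathcal{F}_1)\cap(\cup\mathcal{F}_2)|$ after combining with the identity $|\cup\mathcal{G}_n| \le |\cup\mathcal{F}_1 \cup \cup\mathcal{F}_2|$ and part~(i); hence $\cup\mathcal{F}_4\subsetneq(\cup\mathcal{F}_1)\cap(\cup\mathcal{F}_2)$. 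Making this counting bookkeeping precise — in particular pinning down that $\cup\mathcal{G}_n=\cup\mathcal{G}_0=(\cup\mathcal{F}_1)\cup(\cup\mathcal{F}_2)$ under Type~1 only, and that the multiplicity sum equals $|\cup\mathcal{F}_3|+|\cup\mathcal{F}_4|$ — is where I expect to spend the most care.

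For the \emph{if} direction, assume every step is Type~1. Then $\cup\mathcal{G}_i$ is invariant (Type~1 replaces $\{X,Y\}$ by $\{X\cup Y,X\cap Y\}$, same union), so $\cup\mathcal{G}_n=(\cup\mathcal{F}_1)\cup(\cup\mathcal{F}_2)$, and moreover $\mathcal{G}_i(v)$ is invariant for every $v$ by the equality case of~\eqref{46}. Now take any $v\in(\cup\mathcal{F}_1)\cap(\cup\mathcal{F}_2)$; then $\mathcal{G}_n(v)=\mathcal{G}_0(v)=2$, so $v$ lies in two distinct sets of the laminar family $\mathcal{G}_n$, which must be nested, say $W\subsetneq Z$; the smaller one $W$ is non-maximal, so $v\in\cup\mathcal{F}_4$. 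This gives the reverse inclusion $(\cup\mathcal{F}_1)\cap(\cup\mathcal{F}_2)\subseteq\cup\mathcal{F}_4$, and together with part~(i) yields equality. The main obstacle is purely organizational: keeping the two multiplicity invariants (the per-vertex count $\mathcal{G}_i(\cdot)$ from Proposition~\ref{clm-i-n-PIE-P} and the total size $\sum_W|W|$) cleanly separated and correctly tied to the three PIEO types, since it is their different behaviour under Type~1 versus Types~2,3 that drives the equivalence.
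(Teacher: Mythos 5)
Your part~(i) and the \emph{if} direction of part~(ii) track the paper's own argument precisely: laminarity of $\mathcal{G}_n$ gives $\mathcal{F}_3\in D(\Omega)$; the two-per-vertex bound from Proposition~\ref{clm-i-n-PIE-P} forces $\mathcal{G}_n(u)\leq 2$ and hence both $\mathcal{F}_4\in D(\Omega)$ and $u\in(\cup\mathcal{F}_1)\cap(\cup\mathcal{F}_2)$ for $u\in\cup\mathcal{F}_4$; and under pure Type~1 steps the per-vertex multiplicities are invariant, giving $\mathcal{G}_n(v)=\mathcal{G}_0(v)=2$ exactly on the intersection. That is all correct and is essentially the paper's proof.

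The \emph{only if} direction (by contraposition via a global multiplicity count) is where a genuine gap appears. Your chain is: a Type~2 or~3 step strictly drops $\sum_v\mathcal{G}_i(v)$, so $|\cup\mathcal{F}_3|+|\cup\mathcal{F}_4|<|\cup\mathcal{F}_1|+|\cup\mathcal{F}_2|=|(\cup\mathcal{F}_1)\cup(\cup\mathcal{F}_2)|+|(\cup\mathcal{F}_1)\cap(\cup\mathcal{F}_2)|$; to extract $|\cup\mathcal{F}_4|<|(\cup\mathcal{F}_1)\cap(\cup\mathcal{F}_2)|$ from this you need $|\cup\mathcal{F}_3|\geq|(\cup\mathcal{F}_1)\cup(\cup\mathcal{F}_2)|$, i.e.\ $\cup\mathcal{G}_n\supseteq(\cup\mathcal{F}_1)\cup(\cup\mathcal{F}_2)$. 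The inequality you actually cite, $|\cup\mathcal{G}_n|\leq|(\cup\mathcal{F}_1)\cup(\cup\mathcal{F}_2)|$, runs the wrong way and cannot close the argument; and the reverse inclusion you would need is false in general, because a Type~3 step deletes the elements of $X\setminus Y$ and $Y\setminus X$ that appear in no other set. (You also misstate the Type~3 loss as $|X\cup Y|-|X\cap Y|$; it is $|X\cup Y|$, though that slip is harmless since both are positive.) A clean way to repair this, and the route the paper takes, is to avoid the global count altogether: at a Type~2 or~3 step applied to a properly intersecting pair $X,Y$ in $\mathcal{G}_{i_0-1}$, pick any $w\in X\cap Y$. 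Then $\mathcal{G}_{i_0-1}(w)\geq 2$, hence by Proposition~\ref{clm-i-n-PIE-P} equal to $2$, so $\mathcal{G}_0(w)=2$ and $w\in(\cup\mathcal{F}_1)\cap(\cup\mathcal{F}_2)$; after the step $\mathcal{G}_{i_0}(w)=1$, so $\mathcal{G}_n(w)\leq 1$ and $w\notin\cup\mathcal{F}_4$. This exhibits an explicit element separating $\cup\mathcal{F}_4$ from $(\cup\mathcal{F}_1)\cap(\cup\mathcal{F}_2)$ and avoids the bookkeeping you anticipated being delicate.
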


\begin{proof} Since $\mathcal{G}_{n}$ is laminar, 
	we know that $\mathcal{F}_{3} \in D(\Omega)$  and $\cup \mathcal{F}_{3}= \cup \mathcal{G}_{n}$. 
	
	Let $u \in \cup \mathcal{F}_{4}$. Since $ \cup \mathcal{F}_{4} \subseteq \cup \mathcal{G}_{n} = \cup \mathcal{F}_{3}$, we know that $\mathcal{F}_{3}(u), \mathcal{F}_{4}(u) \geq 1$. By Proposition  \ref{clm-i-n-PIE-P} and $\mathcal{F}_{1}, \mathcal{F}_{2} \in D(\Omega)$,  we have
	\[
	2 \le  \mathcal{F}_{3}(u) + \mathcal{F}_{4}(u)  = \mathcal{G}_{n}(u)  \leq \mathcal{G}_{0}(u) = \mathcal{F}_{1}(u) + \mathcal{F}_{2}(u) \leq 2.
	\]
	Therefore, $ \mathcal{F}_{4}(u)=1$ and $\mathcal{F}_{1}(u)=\mathcal{F}_{2}(u)=1$.
	This proves that  $\mathcal{F}_{4} \in D(\Omega)$; 
	and $u \in (\cup \mathcal{F}_{1}) \cap (\cup \mathcal{F}_{2}) $,  
	hence $ \cup \mathcal{F}_{4} \subseteq (\cup \mathcal{F}_{1}) \cap (\cup  \mathcal{F}_{2})$.

	Suppose for any $i \in [n] $, $\mathcal{G}_{i-1} \xrightarrow{1} \mathcal{G}_{i}$. Let $v \in \Omega$. Then the equality of (\ref{46}) holds.  Thus $\mathcal{G}_{0}(v)=2$, that is $ v \in (\cup \mathcal{F}_{1}) \cap (\cup \mathcal{F}_{2})$, implies $\mathcal{G}_{n}(v)=2$, that is $v \in \cup \mathcal{F}_{4}$. Hence, $ (\cup \mathcal{F}_{1}) \cap (\cup \mathcal{F}_{2}) \subseteq \cup \mathcal{F}_{4} $. Conversely, suppose $ \cup \mathcal{F}_{4}= (\cup \mathcal{F}_{1}) \cap (\cup \mathcal{F}_{2})$. And suppose for some $i_{0} \in [n]$, we adopted the PIEO of Type 2 or 3 on $X$ and $Y$ in $\mathcal{G}_{i_{0}-1}$ and obtained $\mathcal{G}_{i_{0}}$. Let $w \in X\cap Y$. Then $2=\mathcal{G}_{i_{0}-1}(w) > \mathcal{G}_{i_{0}}(w)=1$. Since $\mathcal{G}_{0}(w) \geq \mathcal{G}_{i_{0}-1}(w) > \mathcal{G}_{i_{0}}(w) \geq \mathcal{G}_{n}(w) $, we have $\mathcal{G}_{0}(w)=2$, that is $w \in (\cup \mathcal{F}_{1}) \cap (\cup \mathcal{F}_{2})$, and $ \mathcal{G}_{n}(w) \leq 1$, that is $w \notin \cup \mathcal{F}_{4}$, a contradiction.  
\end{proof}

\begin{proposition} \label{18-P}
	Suppose for $i \in [n]$, $\mathcal{G}_{i-1} \xrightarrow{1~or~2} \mathcal{G}_{i}$, then the following assertions hold.
	\begin{itemize}
		\item[(i)] $\mathcal{F}_{3}=\mathcal{F}_{1} \vee \mathcal{F}_{2}$.
		
		\item[(ii)] If $\cup \mathcal{F}_{2} \subseteq \cup \mathcal{F}_{1}$, then $|\mathcal{F}_{3}| \leq |\mathcal{F}_{1}|$. The equality holds if and only if $\mathcal{F}_{2} \leq \mathcal{F}_{1} $.
	\end{itemize}
\end{proposition}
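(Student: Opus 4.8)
I would prove (i) by checking separately that $\mathcal{F}_3$ is a common upper bound of $\mathcal{F}_1,\mathcal{F}_2$ and that it lies below every common upper bound; then (ii) falls out of (i) together with a simple surjection $\mathcal{F}_1\to\mathcal{F}_3$. Throughout I use the standing hypothesis that only PIEOs of Type~$1$ or $2$ occur: such a step replaces $X,Y$ by sets contained in $X\cup Y$ while keeping $X\cup Y$, so the total union never changes and $\cup\mathcal{F}_3=\cup\mathcal{G}_n=\cup\mathcal{G}_0=(\cup\mathcal{F}_1)\cup(\cup\mathcal{F}_2)$; also $\mathcal{F}_3\in D(\Omega)$ by Proposition~\ref{17-P}(i), and every set arising in the process is nonempty (a PIEO acts on a properly intersecting pair, so each $X\cap Y$ formed is nonempty).

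\textbf{Part (i).} That $\mathcal{F}_3$ is a common upper bound: I would show by induction on $i$ that every member of $\mathcal{G}_{i-1}$ is contained in some member of $\mathcal{G}_i$, since the only members that vanish in a Type~$1$ or $2$ step are $X$ and $Y$, and $X,Y\subseteq X\cup Y\in\mathcal{G}_i$. Chaining these inclusions, every member of $\mathcal{G}_0=\mathcal{F}_1\uplus\mathcal{F}_2$ sits inside a member of $\mathcal{G}_n$, hence inside a maximal one, i.e. inside a member of $\mathcal{F}_3=\mathcal{G}_n'$; thus $\mathcal{F}_1\le\mathcal{F}_3$ and $\mathcal{F}_2\le\mathcal{F}_3$. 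That $\mathcal{F}_3$ is least: given any common upper bound $\mathcal{H}$, I would prove by induction on $i$ that every member of $\mathcal{G}_i$ is contained in a single member of $\mathcal{H}$. The base $i=0$ is exactly $\mathcal{F}_1\le\mathcal{H}$ and $\mathcal{F}_2\le\mathcal{H}$; in the step, if $X\subseteq H_1\in\mathcal{H}$ and $Y\subseteq H_2\in\mathcal{H}$, then $\emptyset\ne X\cap Y\subseteq H_1\cap H_2$ forces $H_1=H_2$ because $\mathcal{H}\in D(\Omega)$, so $X\cup Y$ and $X\cap Y$ both lie in $H_1$, while the untouched members are handled by the hypothesis. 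Hence every member of $\mathcal{F}_3$ is contained in a member of $\mathcal{H}$, i.e. $\mathcal{F}_3\le\mathcal{H}$. Combining the two halves gives $\mathcal{F}_3=\mathcal{F}_1\vee\mathcal{F}_2$.

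\textbf{Part (ii).} Assume $\cup\mathcal{F}_2\subseteq\cup\mathcal{F}_1$, so $\cup\mathcal{F}_3=\cup\mathcal{F}_1$ by the observation above. Since $\mathcal{F}_1\le\mathcal{F}_3$ and $\mathcal{F}_3$ is a family of disjoint nonempty sets, each $X\in\mathcal{F}_1$ lies in a unique $\phi(X)\in\mathcal{F}_3$; this map $\phi:\mathcal{F}_1\to\mathcal{F}_3$ is onto, because any $Z\in\mathcal{F}_3$ is nonempty and contained in $\cup\mathcal{F}_1$, hence meets some $X\in\mathcal{F}_1$, and then $Z=\phi(X)$ by disjointness. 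Therefore $|\mathcal{F}_3|\le|\mathcal{F}_1|$. If moreover $\mathcal{F}_2\le\mathcal{F}_1$, then $\mathcal{F}_1$ is itself a common upper bound of $\mathcal{F}_1,\mathcal{F}_2$, whence $\mathcal{F}_3=\mathcal{F}_1\vee\mathcal{F}_2\le\mathcal{F}_1$; together with $\mathcal{F}_1\le\mathcal{F}_3$ and antisymmetry of $\le$ this yields $\mathcal{F}_3=\mathcal{F}_1$, so $|\mathcal{F}_3|=|\mathcal{F}_1|$. Conversely, if $|\mathcal{F}_3|=|\mathcal{F}_1|$ then $\phi$ is a bijection; for $Y\in\mathcal{F}_2$ pick $Z\in\mathcal{F}_3$ with $Y\subseteq Z$ (using $\mathcal{F}_2\le\mathcal{F}_3$), and observe $Y$ cannot meet two distinct members $X_1,X_2$ of $\mathcal{F}_1$, since then $\phi(X_1)=Z=\phi(X_2)$; so $Y$ meets exactly one $X\in\mathcal{F}_1$, and $Y\subseteq\cup\mathcal{F}_1$ then forces $Y\subseteq X$, i.e. $\mathcal{F}_2\le\mathcal{F}_1$.

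\textbf{Expected difficulty.} No single step is deep; the one needing the most care is the ``$\mathcal{F}_3$ is least'' half of (i), where I must extract from the mere laminarity of an arbitrary upper bound $\mathcal{H}$ the fact that it cannot split two intersecting sets into different members, and then propagate this invariant cleanly through the whole sequence of Type~$1$/$2$ PIEOs (this is also exactly where the hypothesis that no Type~$3$ operation occurs is used). The equality characterisation in (ii) is then a short consequence of antisymmetry of $\le$ and the surjectivity of $\phi$.
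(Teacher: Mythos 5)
Your proof is correct and takes essentially the same route as the paper's: both proceed by induction along the PIEO sequence, using that a disjoint family $\mathcal{H}$ cannot place the two halves of a properly intersecting pair into different members, and both derive (ii) from $\cup\mathcal{F}_3=\cup\mathcal{F}_1$ via the induced surjection from $\mathcal{F}_1$ onto $\mathcal{F}_3$. The only cosmetic difference is that you verify the least-upper-bound property against an arbitrary common upper bound $\mathcal{H}$, whereas the paper works directly against $\mathcal{F}_1\vee\mathcal{F}_2$ through its auxiliary claims (a) and (b).
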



\begin{proof}  
	First, we show the following claims.
	\begin{itemize}
		\item[(a)]   
		For any $Z_{1} \in \mathcal{F}_{1} \cup \mathcal{F}_{2}$, there exists $Z_{2} \in \mathcal{G}'_{i}$ such that $Z_{1} \subseteq Z_{2}$.
			
		\item[(b)] For any $Z_{3} \in \mathcal{G}'_{i}$, there exists $Z_{4} \in \mathcal{F}_{1} \vee \mathcal{F}_{2}$ such that $Z_{3} \subseteq Z_{4}$.
	\end{itemize}
	
	The proof of these two claims is by induction on $i$.
	For the base step, $(a)$  and $(b)$ hold for $\mathcal{G}'_{0}$.
	For induction hypothesis, suppose $(a)$  and $(b)$ hold for $\mathcal{G}'_{i-1}$. Suppose we adopt the PIEO  of Type $1$ or Type $2$ on $X$ and $Y$ in $\mathcal{G}_{i-1}$.
	
	For $(a)$, suppose $Z_{1} \in \mathcal{F}_{1} \cup \mathcal{F}_{2}$,  by induction hypothesis,
	there exists $Z_{2} \in \mathcal{G}'_{i-1}$ such that $Z_{1} \subseteq Z_{2}$.
	Then either $Z_{2} \in \mathcal{G}'_{i}$ or $Z_{2} \subseteq X \cup Y$. Thus $(a)$ holds for $\mathcal{G}'_{i}$.

	For $(b)$, suppose $Z_{3} \in \mathcal{G}'_{i}$. 
	Then either $Z_{3} \in \mathcal{G}'_{i-1}$ or $Z_{3}=X \cup Y$. If $Z_{3} \in \mathcal{G}'_{i-1}$, by induction hypothesis, there exists $Z_{4} \in \mathcal{F}_{1} \vee  \mathcal{F}_{2}$ such that $Z_{3} \subseteq Z_{4} $. If $Z_{3} = X \cup Y$, by induction hypothesis, there exist $Z_{5}, Z_{6} \in \mathcal{F}_{1} \vee \mathcal{F}_{2}$ such that $X \subseteq Z_{5}$ and $Y \subseteq Z_{6}$. Since $X \cap Y \neq \emptyset$, thus $Z_{5} \cap Z_{6} \neq \emptyset$; so $Z_{5}=Z_{6}$. Then $Z_{3} \subseteq Z_{5}$. Hence, $(b)$ holds.
	
	Now  $(a)$ implies $ \mathcal{F}_{1}, \mathcal{F}_{2} \leq \mathcal{G}'_{n} =\mathcal{F}_{3}$. So $ \mathcal{F}_{1} \vee \mathcal{F}_{2} \leq \mathcal{F}_{3}$. And $(b)$ implies $\mathcal{F}_{3} \leq \mathcal{F}_{1} \vee \mathcal{F}_{2}$. Hence, $(i)$ holds.
	
	Suppose $\cup \mathcal{F}_{2} \subseteq \cup \mathcal{F}_{1} $. Note that for $v \in \Omega$ and $0 \leq i \leq n$, $v \in \cup \mathcal{G}_{i}$ if and only if $\mathcal{G}_{i}(v) \geq 1$. Since $\mathcal{G}_{0}(v) \geq \mathcal{G}_{n}(v)$, $ \cup \mathcal{G}_{0} \supseteq \cup \mathcal{G}_{n}$;  since  $\cup \mathcal{F}_{2} \subseteq \cup \mathcal{F}_{1} $, $ \cup \mathcal{F}_{1} =  \cup \mathcal{G}_{0} \supseteq \cup \mathcal{G}_{n} = \cup \mathcal{F}_{3}$.
	By $(i)$, $\cup \mathcal{F}_{1} \subseteq \cup \mathcal{F}_{3}$. This proves  $\cup \mathcal{F}_{1} = \cup \mathcal{F}_{3}$.
	
	Also by $(i)$, for any $Y \in \mathcal{F}_{1}$, there exists $X \in \mathcal{F}_{3}$ such that $Y \subseteq X$; since $\cup \mathcal{F}_{1} = \cup \mathcal{F}_{3}$, for any $X \in \mathcal{F}_{3}$, $X = \cup \{Y \in \mathcal{F}_{1}: Y \subseteq X \}$. Hence, $|\mathcal{F}_{3}| \leq |\mathcal{F}_{1}|$. If $\mathcal{F}_{2} \leq \mathcal{F}_{1}$, the equality clearly holds. Conversely, suppose the equality holds. Then for any $X \in \mathcal{F}_{3}$, $ |\{Y \in \mathcal{F}_{1}: Y \subseteq X \}|=1$, that is, there exists $Y \in \mathcal{F}_{1}$ such that $X=Y$. Hence, $\mathcal{F}_{1} = \mathcal{F}_{3}=\mathcal{F}_{1} \vee \mathcal{F}_{2}$, that is $\mathcal{F}_{2} \leq \mathcal{F}_{1}$.
\end{proof}

\section{Arborescence augmentation with their root degrees bounded above} 

\subsection{Proof of Theorem~\ref{31}}

\noindent \textbf{($\Rightarrow$) Necessity:}
By Theorem~\ref{10}, (\ref{11}) holds.  This proves $(i)$.

To prove $(ii)$, note that for $i \in [k]$, since $F^{*}_{i}$ is a spanning $x$-arborescence of $D$, we have
for $X \subseteq V$, if $ X \cap (V(F_{i}) \cup N^{+}_{F^{*}_{i}}(x)) =\emptyset$, then $d_{F_{i}^{*} \setminus (F_{i} \cup E^{+}(x))}^{-}(X) \geq 1$.
For any disjoint $X_{1}, \ldots, X_{t} \subseteq V$ and nonempty $I \subseteq [k]$ that is the union of some of $I_{1}, \ldots, I_{l}$,  
\begin{equation} \label{19}
\begin{split}
\sum_{j=1}^{t} & |P_{I}(X_{j})|- \sum_{I_{\alpha} \subseteq I}(c'_{\alpha}-\sum_{i \in I_{\alpha}}d_{F_{i}}^{+}(x)) \\
\leq & \sum_{j=1}^{t}\sum_{i \in I}|P_{i}(X_{j})|-\sum_{I_{\alpha} \subseteq I} (\sum_{i \in I_{\alpha}}d_{F^{*}_{i}}^{+}(x) -\sum_{i \in I_{\alpha}}d_{F_{i}}^{+}(x))  ~\text{(using  $\sum_{i \in I_{\alpha}}d^{+}_{F^{*}_{i}}(x) \leq c'_{\alpha}$)}  \\
= & \sum_{j=1}^{t}\sum_{i \in I}|P_{i}(X_{j})|-\sum_{i \in I}(d_{F^{*}_{i}}^{+}(x)-d_{F_{i}}^{+}(x)) \\
 = & \sum_{i \in I}(|\{X_{j}| X_{j} \cap V(F_{i}) =\emptyset \}|-(d_{F^{*}_{i}}^{+}(x)-d_{F_{i}}^{+}(x)))\\
\leq & \sum_{i \in I}|\{X_{j}| X_{j} \cap (V(F_{i}) \cup N_{F^{*}_{i}}^{+}(x))=\emptyset \}|\\ &(\text{since } | N_{F^{*}_{i}}^{+}(x) \setminus V(F_{i}) | =  d_{F^{*}_{i}}^{+}(x)-d_{F_{i}}^{+}(x) ) 
\end{split}
\end{equation}
\begin{align*}
\begin{split}
\leq & \sum_{i \in I}\sum_{j=1}^{t}d^{-}_{F^{*}_{i} \setminus (F_{i} \cup E^{+}(x) )}(X_{j}) 
 =  \sum_{j=1}^{t}\sum_{i \in I}d^{-}_{F^{*}_{i} \setminus ( F_{i} \cup E^{+}(x))}(X_{j})\\
= & \sum_{j=1}^{t}d_{\cup_{i\in I} (F_{i}^{*} \setminus (F_{i} \cup E^{+}(x))) }^{-}(X_{j}) 
\leq  \sum_{j=1}^{t}d_{A \setminus (\cup_{i=1}^{k}F_{i} \cup E^{+}(x))}^{-}(X_{j}).
\end{split}
\end{align*}

\noindent \textbf{($\Leftarrow$) Sufficiency:} 
Suppose $\lambda$ is the number  of $\alpha \in [l]$ such that $\sum_{i \in I_{\alpha}}|d_{F_{i}}^{+}(x)|< c'_{\alpha}$.
We prove the sufficiency by induction on $\lambda$.
In the proof, {\em whenever we say $I \subseteq [k]$, we always mean that $I$
	is a union of some elements in $\{I_{1}, \ldots, I_{l}\}$.}

Let $I $ be a nonempty subset of $[k]$, and $\mathcal{F}$ be a multiset  consisting of some subsets of $V$ (these subsets do not have to be different). 
Define  
\[
\begin{split}
H(I, \emptyset) & := 0; \\
H(I, \mathcal{F}) &:= \sum_{X \in \mathcal{F}}(|P_{I}(X)|-d_{{A \setminus (\cup_{i=1}^{k}F_{i} \cup E^{+}(x))}}^{-}(X) );  
\end{split}
\]
\[
\begin{split}
\mathcal{E}^{1}_{I} := \{ \mathcal{F} \in D(V) : ~ & H(I,\mathcal{F} )=\sum_{I_{\alpha} \subseteq I}(c'_{\alpha} - \sum_{i \in I_{\alpha}}d^{+}_{F_{i}}(x)), \\
& \text{and } |P_{I}(X)|-d_{{A \setminus (\cup_{i=1}^{k} F_{i} \cup E^{+}(x))}}^{-}(X)>0, \text{ for } X \in \mathcal{F} \};
\end{split}
\]
\[
\begin{split}
\mathcal{E}^{2} :=  \{ \mathcal{F} \in D(V): ~ & H([k], \mathcal{F})=\sum_{X \in \mathcal{F}}[x, X]_{A \setminus \cup_{i=1}^{k}F_{i}},  \\
  & \text{and } |P(X)|-d_{{A \setminus (\cup_{i=1}^{k}F_{i} \cup E^{+}(x))}}^{-}(X)>0, \text{ for } X \in \mathcal{F} \}.
\end{split}
\]
Note that {$d_{A \setminus \cup _{i=1}^{k}F_{i}}^{-}(X)=d_{A \setminus (\cup_{i=1}^{k}F_{i} \cup E^{+}(x))}^{-}(X) + [x, X]_{A \setminus \cup_{i=1}^{k}F_{i}} $}; for $X \in \mathcal{F} \in \mathcal{E}^{2}$, by $(\ref{11})$, $ |P(X)|-d_{{A \setminus (\cup_{i=1}^{k}F_{i} \cup E^{+}(x))}}^{-}(X) \leq [x, X]_{A \setminus \cup_{i=1}^{k}F_{i}}$; thus
$H([k], \mathcal{F}) \leq \sum_{X \in \mathcal{F}}[x, X]_{A \setminus \cup_{i=1}^{k}F_{i}}$. Since $ \mathcal{F} \in \mathcal{E}^{2}$, we have  $H([k], \mathcal{F})=\sum_{X \in \mathcal{F}}[x, X]_{A \setminus \cup_{i=1}^{k}F_{i}}$. It follows that $|P(X)|-d_{{A \setminus (\cup_{i=1}^{k}F_{i} \cup E^{+}(x))}}^{-}(X) = [x, X]_{A \setminus \cup_{i=1}^{k}F_{i}}$.

\noindent\textbf{Process of PIEOs.} Suppose $I \subseteq [k]$,    $\mathcal{F}_{1} \in \mathcal{E}_{I}^{1}$ and $\mathcal{F}_{2} \in \mathcal{E}_{I}^{1} \cup \mathcal{E}^{2}$. Let  $\mathcal{G}_{0}=\mathcal{F}_{1} \uplus \mathcal{F}_{2}$. If there exists a {properly} intersecting pair $X$ and $Y$ in $\mathcal{G}_{0}$.
Since both  $\mathcal{F}_{1}, \mathcal{F}_{2} \in \mathcal{E}^{1}_{I} \cup \mathcal{E}^{2}$, we have $|P_{I}(X)|-d_{{A \setminus (\cup_{i=1}^{k}F_{i} \cup E^{+}(x))}}^{-}(X), |P_{I}(Y)|-d_{{A \setminus (\cup_{i=1}^{k}F_{i} \cup E^{+}(x))}}^{-}(Y) > 0$.
Recall that function $|P_{I}|- d^{-}$ is intersecting supermodular, we have
\[
\begin{split}
0 & <|P_{I}(X)|-d_{{A \setminus (\cup_{i=1}^{k}F_{i} \cup E^{+}(x))}}^{-}(X) + |P_{I}(Y)|-d_{{A \setminus (\cup_{i=1}^{k}F_{i} \cup E^{+}(x))}}^{-}(Y) \\
& \leq |P_{I}(X \cup Y)|-d_{{A \setminus (\cup_{i=1}^{k}F_{i} \cup E^{+}(x))}}^{-}(X \cup Y) + |P_{I}(X \cap Y)|-d_{{A \setminus (\cup_{i=1}^{k}F_{i} \cup E^{+}(x))}}^{-}(X \cap Y).
\end{split}
\] 
This gives $|P_{I}(X\cup Y)|-d_{{A \setminus (\cup_{i=1}^{k}F_{i} \cup E^{+}(x))}}^{-}(X \cup Y) >0 $ or $|P_{I}(X \cap Y)|-d_{{A \setminus (\cup_{i=1}^{k}F_{i} \cup E^{+}(x))}}^{-}(X \cap Y) >0$. Then we obtain $\mathcal{G}_{1}$ by replacing $X$ and $Y $ in $\mathcal{G}_{0}$ with the following subsets: $(i)$ $X \cup Y$, if $ |P_{I}(X \cap Y)|-d_{{A \setminus (\cup_{i=1}^{k}F_{i} \cup E^{+}(x))}}^{-}(X \cap Y) \leq 0$; $(ii)$ $X \cap Y$, if $ |P_{I}(X \cup Y)|-d_{{A \setminus (\cup_{i=1}^{k}F_{i} \cup E^{+}(x))}}^{-}(X \cup Y) \leq 0$; or $(iii)$ $X \cup Y$ and $X \cap Y$, otherwise. Suppose we adopt PIEOs in $\mathcal{G}_{0}$ in this way, step by step, and obtain families of subsets of $V$, $\mathcal{G}_{0}, \ldots, \mathcal{G}_{n}$, until there are no {properly} intersecting pairs anymore. Let $\mathcal{G}_{i}'$ be the family of maximal elements in $\mathcal{G}_{i}$ for $0 \leq i \leq n$. Let $\mathcal{F}_{3}:= \mathcal{G}_{n}' $ and $\mathcal{F}_{4}:=\mathcal{G}_{n} \setminus \mathcal{F}_{3} $. If $\mathcal{G}_{0}$ is laminar, then $n=0$ and it is not hard to see that $\mathcal{F}_{3}=\mathcal{F}_{1} \vee \mathcal{F}_{2}$ and $\mathcal{F}_{4}=\mathcal{F}_{1} \wedge \mathcal{F}_{2}$. By the constructing process of $\mathcal{G}_{n}$, we have
\begin{equation} \label{49}
H(I,\mathcal{F}_{1})+  H(I,\mathcal{F}_{2})=H(I, \mathcal{G}_{0})  \leq \ldots \leq  H(I, \mathcal{G}_{n})= H(I, \mathcal{F}_{3}) +  H(I, \mathcal{F}_{4}).
\end{equation}

\begin{claim}\label{48}
	Suppose $I \subseteq [k]$,  and  $\mathcal{F}_{1}, \mathcal{F}_{2} \in \mathcal{E}_{I}^{1}$. Then $\mathcal{F}_{3}, \mathcal{F}_{4} \in \mathcal{E}^{1}_{I}$.
\end{claim}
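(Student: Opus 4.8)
The plan is to verify, for both $\mathcal{F}_{3}$ and $\mathcal{F}_{4}$, the three defining conditions of membership in $\mathcal{E}^{1}_{I}$: lying in $D(V)$, having $H(I,\cdot)$ equal to the common target value $b(I):=\sum_{I_{\alpha}\subseteq I}(c'_{\alpha}-\sum_{i\in I_{\alpha}}d^{+}_{F_{i}}(x))$, and having every member of strictly positive weight $|P_{I}(\cdot)|-d^{-}_{A\setminus(\cup_{i=1}^{k}F_{i}\cup E^{+}(x))}(\cdot)$. The values $H(I,\mathcal{F}_{3})$ and $H(I,\mathcal{F}_{4})$ will be squeezed between $b(I)$ from below, using~(\ref{49}), and $b(I)$ from above, using hypothesis (ii) of Theorem~\ref{31}, i.e.~(\ref{22}). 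For the lower bound, since $\mathcal{F}_{1},\mathcal{F}_{2}\in\mathcal{E}^{1}_{I}$ we have $H(I,\mathcal{F}_{1})=H(I,\mathcal{F}_{2})=b(I)$, so~(\ref{49}) gives $H(I,\mathcal{F}_{3})+H(I,\mathcal{F}_{4})=H(I,\mathcal{G}_{n})\geq H(I,\mathcal{G}_{0})=2b(I)$.

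Next I would record positivity and $D(V)$-membership for the output of the process. For positivity, I induct on $i$ to show that every member $X$ of $\mathcal{G}_{i}$ has $|P_{I}(X)|-d^{-}_{A\setminus(\cup_{i=1}^{k}F_{i}\cup E^{+}(x))}(X)>0$: the base case holds since $\mathcal{F}_{1},\mathcal{F}_{2}\in\mathcal{E}^{1}_{I}$, and the inductive step holds because in the Process of PIEOs $X\cup Y$ (resp.\ $X\cap Y$) is kept only when its weight is positive; in particular this holds for all members of $\mathcal{F}_{3}$ and $\mathcal{F}_{4}$. For $D(V)$-membership, $\mathcal{G}_{n}$ is laminar (the process terminates, as in Section~3), so $\mathcal{F}_{3}=\mathcal{G}_{n}'$ is pairwise disjoint; and since each PIEO step only removes occurrences of vertices, $\mathcal{G}_{n}(v)\leq\mathcal{G}_{0}(v)=\mathcal{F}_{1}(v)+\mathcal{F}_{2}(v)\leq 2$ for every $v$ (cf.\ Proposition~\ref{clm-i-n-PIE-P}), and this forces the non-maximal members $\mathcal{F}_{4}$ to be pairwise disjoint as well: a vertex lying in two members of $\mathcal{F}_{4}$ would, by laminarity, also lie in a maximal member containing both, giving $\mathcal{G}_{n}(v)\geq 3$.

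With $\mathcal{F}_{3},\mathcal{F}_{4}\in D(V)$ in hand, (\ref{22}) applies to each of them, giving $H(I,\mathcal{F}_{3})\leq b(I)$ and $H(I,\mathcal{F}_{4})\leq b(I)$. Adding these and comparing with $H(I,\mathcal{F}_{3})+H(I,\mathcal{F}_{4})\geq 2b(I)$ forces $H(I,\mathcal{F}_{3})=H(I,\mathcal{F}_{4})=b(I)$. Combined with positivity and $D(V)$-membership this is precisely $\mathcal{F}_{3},\mathcal{F}_{4}\in\mathcal{E}^{1}_{I}$.

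The step I expect to be the main obstacle is checking that $\mathcal{F}_{4}$, the sub(multi)set of non-maximal elements of $\mathcal{G}_{n}$, is genuinely a family of pairwise disjoint subsets of $V$, so that~(\ref{22}) may be legitimately applied to it; since $\mathcal{G}_{0}=\mathcal{F}_{1}\uplus\mathcal{F}_{2}$ is a multiset, one really needs the bound $\mathcal{G}_{n}(v)\leq 2$, not laminarity alone, to exclude a vertex surviving in three or more members. The remaining steps are direct invocations of~(\ref{49}) and~(\ref{22}) together with the bookkeeping built into the construction of the families $\mathcal{G}_{i}$.
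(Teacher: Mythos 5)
Your proof is correct and follows essentially the same route as the paper: squeeze $H(I,\mathcal{F}_3)$ and $H(I,\mathcal{F}_4)$ between the lower bound from~(\ref{49}) and the upper bound from~(\ref{22}), and invoke the construction of the PIEO process for the positivity of each member. The only difference is that you explicitly re-derive the fact that $\mathcal{F}_3,\mathcal{F}_4\in D(V)$ (so that~(\ref{22}) can be legitimately applied), whereas the paper has already established this as Proposition~\ref{17-P}(i) and uses it implicitly in the phrase ``Combining the constructing process of $\mathcal{G}_n$.''
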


\begin{proof}
	Since $\mathcal{F}_{1}, \mathcal{F}_{2} \in \mathcal{E}_{I}^{1}$ and (\ref{49}) holds,
	\[
	2 \sum_{I_{\alpha} \subseteq I}(c'_{\alpha} - \sum_{i \in I_{\alpha}}d_{F_{i}}^{+}(x)) =  H(I, \mathcal{F}_{1}) +  H(I, \mathcal{F}_{2}) \leq  H(I, \mathcal{F}_{3}) +  H(I, \mathcal{F}_{4}).
	\]
	Since (\ref{22}) implies both
	\[
	H(I,\mathcal{F}_{3}) \leq \sum_{I_{\alpha} \subseteq I}(c'_{\alpha} - \sum_{i \in I_{\alpha}}d_{F_{i}}^{+}(x))
	~~\mbox{ and } ~~	
	 H(I, \mathcal{F}_{4} ) \leq \sum_{I_{\alpha} \subseteq I}(c'_{\alpha} - \sum_{i \in I_{\alpha}}d_{F_{i}}^{+}(x)).
	\]
	Therefore,
	\[
	H(I,\mathcal{F}_{3})= H(I, \mathcal{F}_{4} )= \sum_{I_{\alpha} \subseteq I}(c'_{\alpha} - \sum_{i \in I_{\alpha}}d_{F_{i}}^{+}(x)).
	\]
	Combining the constructing process of $\mathcal{G}_{n}$, we have  $\mathcal{F}_{3}$, $\mathcal{F}_{4} \in \mathcal{E}^{1}_{I}$.
\end{proof}

\begin{claim} \label{20}
	Let $I \subseteq [k]$. Suppose $\mathcal{V}_{I} \in \mathcal{E}^{1}_{I}$ satisfies: $(i)$ $\cup \mathcal{V}_{I}$ is minimal; and $(ii)$ subject to (i), $|\mathcal{V}_{I}|$ is maximum. Then $\mathcal{V}_{I}$ is minimum 
	in $(\mathcal{E}^{1}_{I}, \leq )$.
\end{claim}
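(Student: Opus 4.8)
\smallskip
\noindent\textbf{Proof proposal.}
Since the minimum of a poset is unique once it exists, it is enough to prove that $\mathcal{V}_{I}$ is a lower bound of every member of $(\mathcal{E}^{1}_{I},\leq)$. So I would fix an arbitrary $\mathcal{F} \in \mathcal{E}^{1}_{I}$ and argue by contradiction, assuming $\mathcal{V}_{I} \not\leq \mathcal{F}$. The plan is to run the Process of PIEOs of this section on $\mathcal{G}_{0} = \mathcal{V}_{I} \uplus \mathcal{F}$ (legitimate since $\mathcal{V}_{I} \in \mathcal{E}^{1}_{I}$ and $\mathcal{F} \in \mathcal{E}^{1}_{I} \subseteq \mathcal{E}^{1}_{I} \cup \mathcal{E}^{2}$), obtaining $\mathcal{G}_{0}, \ldots, \mathcal{G}_{n}$ together with $\mathcal{F}_{3} = \mathcal{G}_{n}'$ and $\mathcal{F}_{4} = \mathcal{G}_{n} \setminus \mathcal{F}_{3}$; by Claim~\ref{48}, both $\mathcal{F}_{3}$ and $\mathcal{F}_{4}$ lie in $\mathcal{E}^{1}_{I}$.

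The first step would be to show that every PIEO in this process is of Type~$1$. By Proposition~\ref{17-P}(i), $\cup\mathcal{F}_{4} \subseteq (\cup\mathcal{V}_{I}) \cap (\cup\mathcal{F}) \subseteq \cup\mathcal{V}_{I}$; since $\mathcal{F}_{4} \in \mathcal{E}^{1}_{I}$ and $\cup\mathcal{V}_{I}$ is minimal by hypothesis~(i), this forces $\cup\mathcal{F}_{4} = \cup\mathcal{V}_{I}$, hence also $\cup\mathcal{V}_{I} \subseteq \cup\mathcal{F}$ and $(\cup\mathcal{V}_{I}) \cap (\cup\mathcal{F}) = \cup\mathcal{V}_{I} = \cup\mathcal{F}_{4}$; Proposition~\ref{17-P}(ii) then gives $\mathcal{G}_{i-1} \xrightarrow{1} \mathcal{G}_{i}$ for all $i \in [n]$. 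From this I would extract two facts: by Proposition~\ref{18-P}(i), $\mathcal{F}_{3} = \mathcal{V}_{I} \vee \mathcal{F}$ (so $\mathcal{V}_{I}, \mathcal{F} \leq \mathcal{F}_{3}$); and, since equality holds in~(\ref{46}) at every step, $\mathcal{G}_{n}(v) = \mathcal{G}_{0}(v)$ for all $v \in V$, whence $\cup\mathcal{F}_{3} = \cup\mathcal{G}_{n} = \cup\mathcal{G}_{0} = \cup\mathcal{F}$ (using $\cup\mathcal{V}_{I} \subseteq \cup\mathcal{F}$).

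The contradiction would then come from a double count. Since a Type-$1$ PIEO replaces two sets by two sets, $|\mathcal{F}_{3}| + |\mathcal{F}_{4}| = |\mathcal{G}_{n}| = |\mathcal{G}_{0}| = |\mathcal{V}_{I}| + |\mathcal{F}|$. On the other hand, $\mathcal{F} \leq \mathcal{F}_{3}$ together with $\cup\mathcal{F} = \cup\mathcal{F}_{3}$ and $\mathcal{F}_{3}, \mathcal{F} \in D(V)$ shows that each member of $\mathcal{F}_{3}$ is the disjoint union of those members of $\mathcal{F}$ it contains; and $\mathcal{V}_{I} \leq \mathcal{F}_{3}$ while $\mathcal{V}_{I} \not\leq \mathcal{F}$ forces $\mathcal{F}_{3} \neq \mathcal{F}$, so at least one of these unions is nontrivial, giving $|\mathcal{F}| > |\mathcal{F}_{3}|$. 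Hence $|\mathcal{F}_{4}| = |\mathcal{V}_{I}| + |\mathcal{F}| - |\mathcal{F}_{3}| > |\mathcal{V}_{I}|$. But $\mathcal{F}_{4} \in \mathcal{E}^{1}_{I}$ and $\cup\mathcal{F}_{4} = \cup\mathcal{V}_{I}$ is minimal, so hypothesis~(ii) yields $|\mathcal{F}_{4}| \leq |\mathcal{V}_{I}|$ --- a contradiction. Thus $\mathcal{V}_{I} \leq \mathcal{F}$ for every $\mathcal{F} \in \mathcal{E}^{1}_{I}$, and $\mathcal{V}_{I}$ is the minimum of $(\mathcal{E}^{1}_{I},\leq)$.

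The main obstacle I anticipate is the first step: seeing that the minimality of $\cup\mathcal{V}_{I}$ (hypothesis~(i)), routed through Propositions~\ref{17-P} and~\ref{18-P}, forces the whole elimination to be of Type~$1$ --- this is what simultaneously identifies $\mathcal{F}_{3}$ as $\mathcal{V}_{I} \vee \mathcal{F}$ and makes the count $|\mathcal{F}_{3}| + |\mathcal{F}_{4}| = |\mathcal{V}_{I}| + |\mathcal{F}|$ exact. Hypothesis~(ii) is then invoked just once, to bound $|\mathcal{F}_{4}|$ from above; both hypotheses are genuinely needed.
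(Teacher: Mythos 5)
Your proposal is correct, and it runs the same PIEO machinery the paper uses: run the process on $\mathcal{V}_I \uplus \mathcal{F}$, use minimality of $\cup\mathcal{V}_I$ to force $\cup\mathcal{F}_4 = \cup\mathcal{V}_I$, invoke Proposition~\ref{17-P}(ii) to force every step to be of Type~$1$, and then count. The differences from the paper's own argument are three, all of presentation rather than substance. First, the paper splits into laminar and non-laminar cases, while you handle them uniformly (which works, since Prop.~\ref{17-P}(ii) and Prop.~\ref{18-P}(i) are vacuously applicable when $n=0$). Second, the paper sets $\mathcal{F}_1:=\mathcal{F}_0$, $\mathcal{F}_2:=\mathcal{V}_I$, whereas you swap these roles; this matters because Prop.~\ref{18-P}(ii) needs $\cup\mathcal{F}_2 \subseteq \cup\mathcal{F}_1$, which with your assignment would read $\cup\mathcal{F} \subseteq \cup\mathcal{V}_I$ and is \emph{not} what you establish. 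You compensate by not invoking Prop.~\ref{18-P}(ii) at all: you re-derive its key consequence inline (each member of $\mathcal{F}_3$ is the disjoint union of the members of $\mathcal{F}$ it contains, and $\mathcal{F}_3\neq\mathcal{F}$ forces $|\mathcal{F}|>|\mathcal{F}_3|$), which is exactly the count hidden inside the proof of Prop.~\ref{18-P}(ii). Third, you argue by contradiction, whereas the paper proceeds directly through $|\mathcal{F}_3|=|\mathcal{F}_0|$ and reads off $\mathcal{V}_I\leq\mathcal{F}_0$ from the equality case of Prop.~\ref{18-P}(ii). Both routes are sound and of comparable length; the paper's version has the small advantage of citing the equality case of a proposition it has already proved, while yours is self-contained modulo Props.~\ref{17-P} and \ref{18-P}(i) and avoids the case split.
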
 

\begin{proof}
	It suffices to show for any $\mathcal{F}_{0} \in \mathcal{E}^{1}_{I}$, we have  $\mathcal{V}_{I} \leq \mathcal{F}_{0}$.

	If $\mathcal{G}_{0}$ is laminar,
	then $\mathcal{F}_{0} \wedge \mathcal{V}_{I} = \mathcal{F}_{4} \in \mathcal{E}^{1}_{I}$ (by Claim~\ref{48}). Since $\cup ( \mathcal{F}_{0} \wedge \mathcal{V}_{I} ) \subseteq \cup \mathcal{V}_{I}$ and $\cup \mathcal{V}_{I}$ is minimal, thus $\cup (\mathcal{F}_{0} \wedge \mathcal{V}_{I} ) = \cup \mathcal{V}_{I}$. Combining $ \mathcal{F}_{0} \wedge \mathcal{V}_{I} \leq \mathcal{V}_{I}$, for any $ X_{1} \in \mathcal{V}_{I}$, we have $X_{1} = \cup \{X_{2} \in \mathcal{F}_{0} \wedge \mathcal{V}_{I}: X_{2} \subseteq X_{1} \}$; thus $ |\mathcal{F}_{0} \wedge \mathcal{V}_{I}| \geq |\mathcal{V}_{I} |$.
	By the choice $(ii)$ of $\mathcal{V}_{I}$, $  |\mathcal{F}_{0} \wedge \mathcal{V}_{I}|  \leq |\mathcal{V}_{I} |$. So $ |\mathcal{V}_{I} \wedge \mathcal{F}_{0}| = |\mathcal{V}_{I} |$. And  
	for any  $ X_{1} \in \mathcal{V}_{I}$, $| \{X_{2} \in \mathcal{F}_{0} \wedge \mathcal{V}_{I}: X_{2} \subseteq X_{1} \}|=1$, that is $X_{1} \in \mathcal{F}_{0} \wedge \mathcal{V}_{I}$. Hence, $ \mathcal{F}_{0} \wedge \mathcal{V}_{I} = \mathcal{V}_{I} $.  This proves  $\mathcal{V}_{I} \leq \mathcal{F}_{0}$. 
		
	Suppose $\mathcal{G}_{0}$ is not laminar.
	By Proposition~\ref{17-P} (let $\mathcal{F}_{1}:=\mathcal{F}_{0}$ and $\mathcal{F}_{2}:=\mathcal{V}_{I}$), $\cup \mathcal{F}_{4} \subseteq  (\cup \mathcal{F}_{0}) \cap (\cup \mathcal{V}_{I}) $. Since $\mathcal{F}_{4} \in \mathcal{E}_{I}^{1}$ (by Claim~\ref{48}) and $\cup \mathcal{V}_{I}$ is minimal, $\cup \mathcal{F}_{4} = \cup \mathcal{V}_{I}$. Thus $\cup \mathcal{F}_{4} = (\cup \mathcal{F}_{0}) \cap (\cup \mathcal{V}_{I}) $); and by the choice $(ii)$ of $\mathcal{V}_{I}$, $ |\mathcal{F}_{4}| \leq |\mathcal{V}_{I}|$.
	
	Since $\cup \mathcal{F}_{4} = (\cup \mathcal{V}_{I}) \cap (\cup \mathcal{F}_{0}) $), by Proposition~\ref{17-P}, for any $i \in [n]$, $\mathcal{G}_{i-1} \xrightarrow{1} \mathcal{G}_{i}$. Then for $i \in [n]$, $|\mathcal{G}_{i-1}|=|\mathcal{G}_{i}|$, and thus $ |\mathcal{F}_{0}|+|\mathcal{V}_{I}|= |\mathcal{G}_{0}|=|\mathcal{G}_{n}|= |\mathcal{F}_{3}| + |\mathcal{F}_{4}|$. Combining  $ |\mathcal{F}_{4}| \leq |\mathcal{V}_{I}|$, 
	we have $|\mathcal{F}_{0}| \leq |\mathcal{F}_{3}|$. According to Proposition~\ref{18-P} $(ii)$, $ |\mathcal{F}_{3}| \leq |\mathcal{F}_{0}|$. Thus $ |\mathcal{F}_{3}| = |\mathcal{F}_{0}|$. Then by Proposition~\ref{18-P} (ii), we have $\mathcal{V}_{I} \leq \mathcal{F}_{0}$.
\end{proof}

For $I \subseteq [k]$, by Claim \ref{20},  
we suppose $\mathcal{V}_{I} \in \mathcal{E}^{1}_{I}$ is minimum in $(\mathcal{E}^{1}_{I}, \leq )$. 

\begin{claim}\label{45}
	Suppose $I \subseteq [k]$,   and $\mathcal{F}_{1}, \mathcal{F}_{2} \in \mathcal{E}_{I}^{1}$. Then
	\begin{itemize}
		\item[(i)] for any $X_{1} \in \mathcal{F}_{i}$, $i=1,2$, there exists $X_{2} \in \mathcal{V}_{I} $ such that $X_{2} \subseteq X_{1}$;
		
		\item[(ii)] $\mathcal{F}_{1} \vee \mathcal{F}_{2} \in \mathcal{E}_{I}^{1}$.
	\end{itemize}
\end{claim}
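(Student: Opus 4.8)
The plan is to reduce everything to the hypothesis $(\ref{22})$. For brevity put $\varphi(X):=|P_{I}(X)|-d^{-}_{A\setminus(\cup_{i=1}^{k}F_{i}\cup E^{+}(x))}(X)$ for $X\subseteq V$ and $R:=\sum_{I_{\alpha}\subseteq I}(c'_{\alpha}-\sum_{i\in I_{\alpha}}d^{+}_{F_{i}}(x))$, so that $H(I,\mathcal{F})=\sum_{X\in\mathcal{F}}\varphi(X)$, a family $\mathcal{F}\in D(V)$ lies in $\mathcal{E}^{1}_{I}$ exactly when $\sum_{X\in\mathcal{F}}\varphi(X)=R$ and $\varphi(X)>0$ for all $X\in\mathcal{F}$, and $(\ref{22})$ says precisely that $\sum_{X\in\mathcal{F}}\varphi(X)\le R$ for every $\mathcal{F}\in D(V)$. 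For part (i) I would start from Claim~\ref{20}, which gives that $\mathcal{V}_{I}$ is the minimum of $(\mathcal{E}^{1}_{I},\le)$, hence $\mathcal{V}_{I}\le\mathcal{F}_{i}$; so every member of $\mathcal{V}_{I}$ lies inside some member of $\mathcal{F}_{i}$, and since $\mathcal{F}_{i}$ is a disjoint family each $Z\in\mathcal{V}_{I}$ is either contained in $X_{1}$ or disjoint from $X_{1}$. If none were contained in $X_{1}$, then $X_{1}$ would be disjoint from $\bigcup\mathcal{V}_{I}$, so $\mathcal{V}_{I}\cup\{X_{1}\}\in D(V)$, and $\sum_{X\in\mathcal{V}_{I}\cup\{X_{1}\}}\varphi(X)=R+\varphi(X_{1})>R$ since $X_{1}\in\mathcal{F}_{i}\in\mathcal{E}^{1}_{I}$ forces $\varphi(X_{1})>0$; this contradicts $(\ref{22})$, so some member of $\mathcal{V}_{I}$ sits inside $X_{1}$.

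For part (ii) I would proceed in two steps: first compute $H(I,\mathcal{F}_{1}\vee\mathcal{F}_{2})$, then prove positivity of $\varphi$ on its members. For the first step, $(\ref{22})$ immediately gives $\sum_{W\in\mathcal{F}_{1}\vee\mathcal{F}_{2}}\varphi(W)\le R$ since $\mathcal{F}_{1}\vee\mathcal{F}_{2}\in D(V)$. For the reverse inequality I would run the uncrossing process of Section~3 on $\mathcal{G}_{0}=\mathcal{F}_{1}\uplus\mathcal{F}_{2}$ always using Type~$1$ PIEOs (replacing a properly intersecting pair $X,Y$ by $X\cup Y$ and $X\cap Y$) until a laminar family $\mathcal{G}_{n}$ is reached: by Proposition~\ref{18-P}(i) the family of maximal elements of $\mathcal{G}_{n}$ is exactly $\mathcal{F}_{1}\vee\mathcal{F}_{2}$, and by Proposition~\ref{17-P}(i) the remaining sets $\mathcal{F}_{4}:=\mathcal{G}_{n}\setminus(\mathcal{F}_{1}\vee\mathcal{F}_{2})$ form a member of $D(V)$. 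Since $|P_{I}|-d^{-}_{A\setminus(\cup_{i=1}^{k}F_{i}\cup E^{+}(x))}$ is intersecting supermodular, a Type~$1$ move never decreases $\sum\varphi$, so $\sum_{W\in\mathcal{F}_{1}\vee\mathcal{F}_{2}}\varphi(W)+\sum_{Z\in\mathcal{F}_{4}}\varphi(Z)=\sum_{Z\in\mathcal{G}_{n}}\varphi(Z)\ge\sum_{Z\in\mathcal{G}_{0}}\varphi(Z)=2R$, while $\sum_{Z\in\mathcal{F}_{4}}\varphi(Z)\le R$ by $(\ref{22})$; hence $\sum_{W\in\mathcal{F}_{1}\vee\mathcal{F}_{2}}\varphi(W)=R$.

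For the second step, suppose some $W\in\mathcal{F}_{1}\vee\mathcal{F}_{2}$ had $\varphi(W)\le0$. Applying $(\ref{22})$ to $(\mathcal{F}_{1}\vee\mathcal{F}_{2})\setminus\{W\}$ forces $\varphi(W)=0$, so $\mathcal{F}^{*}:=\{W'\in\mathcal{F}_{1}\vee\mathcal{F}_{2}:\varphi(W')>0\}$ still has $\sum_{W'\in\mathcal{F}^{*}}\varphi(W')=R$, hence $\mathcal{F}^{*}\in\mathcal{E}^{1}_{I}$ with $W\notin\mathcal{F}^{*}$. Now $W$ is the union of a connected component of the intersection graph of $\mathcal{F}_{1}\uplus\mathcal{F}_{2}$; if that component contained no member of $\mathcal{F}_{1}$ it would consist of pairwise disjoint members of $\mathcal{F}_{2}$ and, being connected, equal a single member of $\mathcal{F}_{2}$, which has $\varphi>0$ --- impossible as $\varphi(W)=0$; so there is $W_{0}\in\mathcal{F}_{1}$ with $W_{0}\subseteq W$. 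Since $\mathcal{F}_{1}\vee\mathcal{F}_{2}\in D(V)$ and $W\notin\mathcal{F}^{*}\subseteq\mathcal{F}_{1}\vee\mathcal{F}_{2}$, $W_{0}$ is disjoint from every member of $\mathcal{F}^{*}$, so $\mathcal{F}^{*}\cup\{W_{0}\}\in D(V)$ and $\sum_{X\in\mathcal{F}^{*}\cup\{W_{0}\}}\varphi(X)=R+\varphi(W_{0})>R$ because $W_{0}\in\mathcal{F}_{1}\in\mathcal{E}^{1}_{I}$, contradicting $(\ref{22})$. Thus every member of $\mathcal{F}_{1}\vee\mathcal{F}_{2}$ has positive $\varphi$, and with $\sum_{W}\varphi(W)=R$ this is exactly $\mathcal{F}_{1}\vee\mathcal{F}_{2}\in\mathcal{E}^{1}_{I}$.

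The step I expect to be the main obstacle is the lower bound $\sum_{W\in\mathcal{F}_{1}\vee\mathcal{F}_{2}}\varphi(W)\ge R$: because $|P_{I}|-d^{-}_{A\setminus(\cup_{i=1}^{k}F_{i}\cup E^{+}(x))}$ is supermodular only on \emph{properly intersecting} pairs, one cannot simply add up the contributions of $\mathcal{F}_{1}$ over the merged components, and the budget $R$ has to be transported through the Type~$1$ uncrossing together with Propositions~\ref{17-P} and~\ref{18-P} --- this is the only place where the machinery of Section~3 is genuinely needed. Everything else is routine: part (i) and the positivity argument are both just the observation that adding one more set (there $X_{1}$, here $W_{0}$) to a family of total $\varphi$-weight $R$ would overshoot $R$, contradicting $(\ref{22})$.
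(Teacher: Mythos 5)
Your part (i) is essentially the paper's proof: both arguments conclude that a member $X_{1}$ of $\mathcal{F}_{i}$ disjoint from $\cup\mathcal{V}_{I}$ could be adjoined to $\mathcal{V}_{I}$ to create a disjoint family with $H$-value $R+\varphi(X_{1})>R$, contradicting (\ref{22}) (the paper reaches the same contradiction slightly more circuitously, via
$H(I,\mathcal{V}_{I}+X_{0})+H(I,\mathcal{F}_{1}-X_{0})=2R$).

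Your part (ii) is correct but takes a genuinely different route. The paper runs the sign-sensitive PIEO process of Section 4 (Type 1, 2, or 3 depending on which of $\varphi(X\cup Y)$, $\varphi(X\cap Y)$ are positive), so that positivity on the resulting families $\mathcal{F}_{3},\mathcal{F}_{4}$ is built in and Claim~\ref{48} yields $\mathcal{F}_{3},\mathcal{F}_{4}\in\mathcal{E}^{1}_{I}$; the remaining work is to show no Type 3 step occurs so that $\mathcal{F}_{3}=\mathcal{F}_{1}\vee\mathcal{F}_{2}$, and for that the paper uses Proposition~\ref{52-P}, Proposition~\ref{47-P}, and, crucially, part (i) of the same claim to locate a set $X_{3}\in\mathcal{V}_{I}$ inside a set destroyed by the Type 3 move. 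You instead run Type 1 moves only, which gives $\mathcal{F}_{3}=\mathcal{F}_{1}\vee\mathcal{F}_{2}$ immediately from Proposition~\ref{18-P}(i); you obtain $H(I,\mathcal{F}_{1}\vee\mathcal{F}_{2})=R$ from the $2R$ counting together with $H(I,\mathcal{F}_{4})\le R$; and you prove positivity by a separate direct argument (if $\varphi(W)=0$, the connected component defining $W$ contains some $W_{0}\in\mathcal{F}_{1}$, and $\mathcal{F}^{*}\cup\{W_{0}\}$ would overshoot $R$). What you gain is modularity: your (ii) does not invoke (i), you never need the sign-sensitive case analysis in the PIEO process, and the positivity step becomes a reprise of the overshooting idea already used for (i). What the paper's route gains is that the sign-sensitive uncrossing and Claim~\ref{48} are reused verbatim in Claims~\ref{20} and~\ref{33}, so the machinery is amortized; your Type-1-only variant would still be available there, but then each of those claims would need its own positivity argument.

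One point worth making explicit (you use it tacitly): the Type-1-only uncrossing terminates. This follows from Propositions~\ref{clm-i-n-PIE-P} and~\ref{47-P} exactly as in Section 3 — Type 1 moves preserve $\mathcal{G}_{i}(v)$, so $\mathcal{G}_{i}(v)\le 2$ throughout, properly intersecting pairs are both maximal, and each Type 1 move strictly decreases the number of maximal sets. Also, for the "$\ge$" inequality $\sum_{\mathcal{G}_{n}}\varphi\ge\sum_{\mathcal{G}_{0}}\varphi$ you correctly appeal to intersecting supermodularity of $|P_{I}|-d^{-}$ on all intersecting pairs (not just positively intersecting ones), which the paper establishes right after Lemma~\ref{8}; this is why Type 1 moves never lose weight even when $\varphi(X\cap Y)\le 0$.
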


\begin{proof}
	$(i)$ Suppose, to the contrary, without loss of generality, there exists $X_{0} \in \mathcal{F}_{1}$ such that $X_{0} \cap (\cup \mathcal{V}_{I}) = \emptyset$ (otherwise, there exists $Y \in  \mathcal{V}_{I}$, such that $X_{0} \cap Y \neq  \emptyset$; since both  $\mathcal{F}_{1},  \mathcal{V}_{I}  \in \mathcal{E}_{I}^{1} \subseteq D(V)$, and $\mathcal{V}_{I}$ is minimum in $(\mathcal{E}^{1}_{I}, \leq )$, we deduce that $Y \subseteq X_0$.).
	Consider $\mathcal{V}_{I}+X_{0}$ and $\mathcal{F}_{1}-X_{0}$. Since
	\[
	H(I, \mathcal{V}_{I}+X_{0}) + H(I, \mathcal{F}_{1}-X_{0})=H(I, \mathcal{V}_{I})+H(I, \mathcal{F}_{1})=2 \sum_{I_{\alpha} \subseteq I} (c'_{\alpha} - \sum_{i \in I_{\alpha}} d_{F_{i}}^{+}(x) ),
	\]
	where the last equality is due to that both $\mathcal{V}_{I}, \mathcal{F}_{1}  \in \mathcal{E}_{I}^{1}$.
	By (\ref{22}),
	\[
	H(I, \mathcal{V}_{I}+X_{0}), H(I, \mathcal{F}_{1}-X_{0}) \leq  \sum_{I_{\alpha} \subseteq I} (c'_{\alpha} - \sum_{i \in I_{\alpha}} d_{F_{i}}^{+}(x) ).
	\]
	We deduce that $H(I, \mathcal{V}_{I}+X_{0}) = \sum_{I_{\alpha} \subseteq I} (c'_{\alpha} - \sum_{i \in I_{\alpha}} d_{F_{i}}^{+}(x) )$.
	Since $\mathcal{V}_{I} \in \mathcal{E}_{I}^{1} $, we have $H(I,\mathcal{V}_{I} )=\sum_{I_{\alpha} \subseteq I}(c'_{\alpha} - \sum_{i \in I_{\alpha}}d^{+}_{F_{i}}(x))$. Thus $H(I, \mathcal{V}_{I}+X_{0}) = H(I,\mathcal{V}_{I} )$; this gives us $|P_{I}(X_{0})|-d_{{A \setminus (\cup_{i=1}^{k}F_{i} \cup E^{+}(x))}}^{-}(X_{0})=0$. On the other hand,  since $X_{0} \in \mathcal{F}_{1} \in \mathcal{E}_{I}^{1}$, we have $ |P_{I}(X_{0})|-d_{{A \setminus (\cup_{i=1}^{k}F_{i} \cup E^{+}(x))}}^{-}(X_{0})>0$. This is a contradiction.

	$(ii)$   If $\mathcal{G}_{0}$ is laminar, then by Claim~\ref{48},   $ \mathcal{F}_{1} \vee \mathcal{F}_{2} = \mathcal{F}_{3} \in \mathcal{E}_{I}^{1}$. Suppose $\mathcal{G}_{0}$ is not laminar. 
	We claim   that for all $i \in [n]$, $\mathcal{G}_{i-1} \xrightarrow{1~or~2} \mathcal{G}_{i}$.
	Then by Proposition~\ref{18-P} $(i)$ and 
	Claim~\ref{48}, $\mathcal{F}_{1} \vee \mathcal{F}_{2} = \mathcal{F}_{3}  \in \mathcal{E}^{1}_{I}$. This will finish the proof of $(ii)$.
	
	Suppose, to the contrary, there exists a minimum $i_{0} \in [n]$ such that $\mathcal{G}_{i_{0}-1} \xrightarrow{3} \mathcal{G}_{i_{0}}$;  that is, $\mathcal{G}_{i-1} \xrightarrow{1~or~2} \mathcal{G}_{i}$ for $1 \leq i \leq i_{0}-1$, and we replace a {properly} intersecting pair $X_{1}$ and $Y_{1}$ in $\mathcal{G}_{i_{0}-1}$ with $ X_{1} \cap Y_{1}$ and obtain $\mathcal{G}_{i_{0}}$. Note that  by Proposition~\ref{47-P}, $X_{1}, Y_{1} \in \mathcal{G}_{i_{0}-1}'$. By Proposition~\ref{52-P}, there exists $X_{2} \in \mathcal{G}_{0}$ such that $ X_{2} \subseteq X_{1}$; by  $(i)$ of this claim, 
	there exists $X_{3} \in \mathcal{V}_{I}$ such that $X_{3} \subseteq X_{2}$.
	Let $v \in X_{3}$. Since $\mathcal{G}_{i_{0}-1} \xrightarrow{3} \mathcal{G}_{i_{0}}$ and $ X_{3} \subseteq X_{1} \cup Y_{1}$, we have
	$  \mathcal{G}_{i_{0}}(v) < \mathcal{G}_{i_{0}-1}(v)$.
	By Proposition~\ref{clm-i-n-PIE-P}, we have $\mathcal{G}_{i_{0}-1}(v) \leq 2$, and then $\mathcal{F}_{3}(v) + \mathcal{F}_{4}(v)= \mathcal{G}_{n}(v) \leq \mathcal{G}_{i_{0}}(v) \leq 1$. Since $ \mathcal{F}_{4}(v) \le  \mathcal{F}_{3}(v)$,
	we have $\mathcal{F}_{4}(v)=0$. However,  $v \in X_{3} \in \mathcal{V}_{I} \leq \mathcal{F}_{4}$, a contradiction.
\end{proof}

For $I \subseteq [k]$, by Claim~\ref{45} $(ii)$,  we suppose  $\mathcal{U}_{I} \in \mathcal{E}^{1}_{I}$ is maximum in $(\mathcal{E}^{1}_{I}, \leq)$.

\begin{claim} \label{33}
	Suppose $\mathcal{F}_{1} \in \mathcal{E}_{[k]}^{1}$ and $\mathcal{F}_{2} \in \mathcal{E}^{2}$. Then $\mathcal{F}_{1} \vee \mathcal{F}_{2} \in \mathcal{E}^{1}_{[k]}$.
\end{claim}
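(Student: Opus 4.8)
\textbf{Proof plan for Claim~\ref{33}.}
The plan is to mimic the structure of the proof of Claim~\ref{45}~$(ii)$, but now dealing with the mixed situation where $\mathcal{F}_1$ lies in $\mathcal{E}^1_{[k]}$ and $\mathcal{F}_2$ lies in $\mathcal{E}^2$. As in the earlier arguments, I would run the Process of PIEOs on $\mathcal{G}_0 = \mathcal{F}_1 \uplus \mathcal{F}_2$, obtaining $\mathcal{G}_0,\ldots,\mathcal{G}_n$ with $\mathcal{F}_3 = \mathcal{G}_n'$ the family of maximal elements and $\mathcal{F}_4 = \mathcal{G}_n \setminus \mathcal{F}_3$. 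The key reduction is to show that \emph{only Type~$1$ or Type~$2$ operations occur}, i.e.\ $\mathcal{G}_{i-1} \xrightarrow{1~or~2} \mathcal{G}_i$ for all $i \in [n]$; once this is established, Proposition~\ref{18-P}~$(i)$ gives $\mathcal{F}_3 = \mathcal{F}_1 \vee \mathcal{F}_2$, and it remains only to verify $\mathcal{F}_3 \in \mathcal{E}^1_{[k]}$.

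First I would record the inequality chain analogous to (\ref{49}) for $I = [k]$, namely $H([k],\mathcal{F}_1) + H([k],\mathcal{F}_2) \le H([k],\mathcal{F}_3) + H([k],\mathcal{F}_4)$. Since $\mathcal{F}_1 \in \mathcal{E}^1_{[k]}$ we have $H([k],\mathcal{F}_1) = \sum_{I_\alpha \subseteq [k]}(c'_\alpha - \sum_{i \in I_\alpha} d^+_{F_i}(x)) = \sum_{i=1}^k(d^+_{F^*_i}(x)\text{-budget})$; write $c' := \sum_\alpha (c'_\alpha - \sum_{i \in I_\alpha} d^+_{F_i}(x))$ for this quantity. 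Since $\mathcal{F}_2 \in \mathcal{E}^2$, we have $H([k],\mathcal{F}_2) = \sum_{X \in \mathcal{F}_2}[x,X]_{A\setminus\cup F_i}$, and moreover (as noted in the text just before the Process of PIEOs) $|P(X)| - d^-_{A\setminus(\cup F_i \cup E^+(x))}(X) = [x,X]_{A\setminus\cup F_i}$ for each $X \in \mathcal{F}_2$. The hypothesis (\ref{22}) applied with $I = [k]$ and each family $\mathcal{F}_3, \mathcal{F}_4$ gives $H([k],\mathcal{F}_3) \le c'$ and $H([k],\mathcal{F}_4) \le c'$. Combining, I would deduce that $H([k],\mathcal{F}_3) = c'$, that $H([k],\mathcal{F}_4) = H([k],\mathcal{F}_2)$, and that equality holds throughout the PIEO chain; together with the construction of $\mathcal{G}_n$ (each retained set has strictly positive $|P_{[k]}| - d^-$ value) this shows $\mathcal{F}_3 \in \mathcal{E}^1_{[k]}$ \emph{provided} $\mathcal{F}_3 = \mathcal{F}_1 \vee \mathcal{F}_2$.

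The main obstacle is therefore exactly the no-Type-$3$ claim, and I expect the argument to parallel the contradiction in Claim~\ref{45}~$(i)$--$(ii)$ but with the roles slightly rebalanced because $\mathcal{F}_2 \in \mathcal{E}^2$ rather than $\mathcal{E}^1_{[k]}$. Concretely: suppose $i_0$ is minimal with $\mathcal{G}_{i_0-1} \xrightarrow{3} \mathcal{G}_{i_0}$, eliminating a properly intersecting pair $X_1, Y_1 \in \mathcal{G}_{i_0-1}'$ (maximality by Proposition~\ref{47-P}). Using Proposition~\ref{52-P} (applicable since all operations up to $i_0-1$ are Type~$1$ or~$2$), $X_1$ contains some $Z \in \mathcal{G}_0 = \mathcal{F}_1 \uplus \mathcal{F}_2$. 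I would split into the two cases $Z \in \mathcal{F}_1$ and $Z \in \mathcal{F}_2$. In the first case I would use an analogue of Claim~\ref{45}~$(i)$ — showing that every member of $\mathcal{F}_1$ contains a member of the minimum family $\mathcal{V}_{[k]} \in \mathcal{E}^1_{[k]}$, hence of the maximum family $\mathcal{U}_{[k]}$, which by the $\mathcal{V}_{[k]} \le \mathcal{F}_4$ containment forces $\mathcal{F}_4(v) \ge 1$ for $v$ in that member, contradicting $\mathcal{F}_4(v) = 0$ that the Type~$3$ step would force (via $\mathcal{G}_{i_0}(v) < \mathcal{G}_{i_0-1}(v) \le 2$ and $\mathcal{F}_3(v) + \mathcal{F}_4(v) = \mathcal{G}_n(v) \le 1$ with $\mathcal{F}_4 \le \mathcal{F}_3$). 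In the second case, $Z \in \mathcal{F}_2 \in \mathcal{E}^2$, so every element of $\mathcal{F}_2$ satisfies $|P(Z)| - d^-_{A\setminus(\cup F_i\cup E^+(x))}(Z) = [x,Z]_{A\setminus \cup F_i} > 0$; I would argue that the equality-everywhere conclusion of the previous paragraph is violated, since dropping $Z$ from the chain strictly decreases $H([k],\cdot)$ while the surviving intersection $X_1 \cap Y_1$ cannot recover the lost value — more precisely, a Type~$3$ step at $X_1 \supseteq Z$ strictly reduces $\mathcal{G}(v)$ at some $v \in Z$ in a way incompatible with $H([k],\mathcal{F}_4) = H([k],\mathcal{F}_2)$ and with $\mathcal{F}_2 \le \mathcal{F}_3 \vee \mathcal{F}_4$. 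I anticipate the bookkeeping in this second case — tracking where the contribution $[x,Z]$ goes under the PIEO and ruling out its loss — to be the delicate point; the resolution should be that, because $\mathcal{F}_2 \in \mathcal{E}^2$ and (\ref{11}) is tight on each $Z$, any Type~$3$ reduction would push $H([k],\mathcal{G}_{i_0}) < H([k],\mathcal{G}_{i_0-1})$ strictly, contradicting the equality-throughout already forced by (\ref{22}). Hence no Type~$3$ step occurs, and the claim follows.
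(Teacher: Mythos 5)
Your top-level strategy matches the paper's: run the PIEO process, show no Type~$3$ step occurs, invoke Proposition~\ref{18-P}~$(i)$, and track $H([k],\cdot)$ along the chain. But two key steps are wrong or missing.

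First, in your second paragraph you try to extract the equalities from
$H([k],\mathcal{F}_1)+H([k],\mathcal{F}_2)\le H([k],\mathcal{F}_3)+H([k],\mathcal{F}_4)$
together with the two bounds $H([k],\mathcal{F}_3)\le c'$ and $H([k],\mathcal{F}_4)\le c'$ obtained from~(\ref{22}). That does not work: with $c'=5$, $H([k],\mathcal{F}_2)=3$, $H([k],\mathcal{F}_3)=H([k],\mathcal{F}_4)=4$, all your inequalities hold but $H([k],\mathcal{F}_3)\ne c'$. The bound you actually need on $\mathcal{F}_4$ is not~(\ref{22}) but the sharper one coming from~(\ref{11}): for every $X$, $|P(X)|-d^{-}_{A\setminus(\cup F_i\cup E^+(x))}(X)\le [x,X]_{A\setminus\cup F_i}$, hence $H([k],\mathcal{F}_4)\le [x,\cup\mathcal{F}_4]_{A\setminus\cup F_i}$; then $\cup\mathcal{F}_4\subseteq\cup\mathcal{F}_2$ (Proposition~\ref{17-P}) and $\mathcal{F}_2\in\mathcal{E}^2$ give $[x,\cup\mathcal{F}_4]\le[x,\cup\mathcal{F}_2]=H([k],\mathcal{F}_2)$. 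This is exactly inequality~(\ref{51}) in the paper, and it is what forces $H([k],\mathcal{F}_3)=c'$ and $[x,\cup\mathcal{F}_4]=[x,\cup\mathcal{F}_2]$. Without it, the whole ``equality throughout'' conclusion (which you then rely on in the Type-$3$ contradiction) is unjustified.

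Second, your case analysis in the no-Type-$3$ argument is both more complicated than needed and partly unsound. In Case~1 ($Z\in\mathcal{F}_1$) you invoke $\mathcal{V}_{[k]}\le\mathcal{F}_4$, but that containment comes from $\mathcal{F}_4\in\mathcal{E}^1_{[k]}$, which is established in Claim~\ref{48} only when \emph{both} input families lie in $\mathcal{E}^1_I$. Here $\mathcal{F}_2\in\mathcal{E}^2$, and nothing guarantees $\mathcal{F}_4\in\mathcal{E}^1_{[k]}$; indeed the paper never asserts it. The fix is to observe (via Proposition~\ref{52-P}, or directly from the structure of a properly intersecting pair in $\mathcal{F}_1\uplus\mathcal{F}_2$ when one lies in $\mathcal{F}_1$) that you can \emph{always} find some $X_5\in\mathcal{F}_2$ with $X_5\subseteq X_4\cup Y_4$ — so only the ``$\mathcal{F}_2$'' case is ever needed. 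Then $\mathcal{G}_{i_0}(v)<\mathcal{G}_{i_0-1}(v)\le 2$ at any $v\in X_5$ forces $\mathcal{F}_4(v)=0$, hence $\cup\mathcal{F}_4\subseteq(\cup\mathcal{F}_2)\setminus X_5$; and since $X_5\in\mathcal{F}_2\in\mathcal{E}^2$ has $[x,X_5]_{A\setminus\cup F_i}>0$, this strictly decreases $[x,\cup\mathcal{F}_4]$ below $[x,\cup\mathcal{F}_2]$, contradicting the equality from the previous paragraph.
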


\begin{proof}
	$H([k], \mathcal{F}_{4}) =$ 
	$\sum_{X \in \mathcal{F}_4}(|P(X)|-d_{{A \setminus (\cup_{i=1}^{k}F_{i} \cup E^{+}(x))}}^{-}(X) )$
	$\leq \sum_{X \in \mathcal{F}_{4}} [x, X]_{A \setminus \cup_{i=1}^{k}F_{i}} = [x, \cup\mathcal{F}_{4}]_{A \setminus \cup_{i=1}^{k}F_{i}}$,
	where the inequality is due to (\ref{11}).
	Since $\cup \mathcal{F}_{4} \subseteq (\cup \mathcal{F}_{1}) \cap (\cup \mathcal{F}_{2} )$ (by Proposition~\ref{17-P}) and $\mathcal{F}_{2} \in  \mathcal{E}^{2}$,
	\begin{equation}\label{51}
	H([k], \mathcal{F}_{4}) \leq [x, \cup\mathcal{F}_{4}]_{A \setminus \cup_{i=1}^{k}F_{i}} \leq [x, \cup\mathcal{F}_{2}]_{A \setminus \cup_{i=1}^{k}F_{i}}=H([k],\mathcal{F}_{2}).
	\end{equation}
	
	By (\ref{22}), combining $\mathcal{F}_{1} \in \mathcal{E}_{[k]}^{1}$,   we have
	\begin{equation}\label{50}
	H([k], \mathcal{F}_{3}) \leq   \sum_{\alpha=1}^{l}(c'_{\alpha} -\sum_{i \in I_{\alpha}}d^{+}_{F_{i}}(x)) =H([k], \mathcal{F}_{1}) .
	\end{equation}
	
	Then (\ref{51}) and (\ref{50}) give
	\[
	H([k], \mathcal{F}_{3})+ H([k], \mathcal{F}_{4}) \leq H([k], \mathcal{F}_{1})+ H([k], \mathcal{F}_{2}).
	\]
	By (\ref{49}), we have
	\[
	H([k], \mathcal{F}_{1})+ H([k], \mathcal{F}_{2})=  H([k], \mathcal{F}_{3})+ H([k], \mathcal{F}_{4}),
	\]
	and the $``\leq"$s of  (\ref{51}) and (\ref{50}) should be $``="$s.
	So
	\begin{equation} \label{H-4-2}
	[x, \cup\mathcal{F}_{4}]_{A \setminus \cup_{i=1}^{k}F_{i}} = [x, \cup\mathcal{F}_{2}]_{A \setminus \cup_{i=1}^{k}F_{i}}
	\end{equation}
	and
	$H([k], \mathcal{F}_{3}) = \sum_{\alpha=1}^{l}(c'_{\alpha} -\sum_{i \in I_{\alpha}}d^{+}_{F_{i}}(x))$. Combining the constructing process of $\mathcal{G}_{n}$, we have $ \mathcal{F}_{3} \in \mathcal{E}_{[k]}^{1}$.
	
	If $\mathcal{G}_{0}$ is laminar, then   $ \mathcal{F}_{1} \vee \mathcal{F}_{2} = \mathcal{F}_{3} \in \mathcal{E}_{[k]}^{1}$. Suppose $\mathcal{G}_{0}$ is not laminar. 
	We claim   that for all $i \in [n]$, $\mathcal{G}_{i-1} \xrightarrow{1~or~2} \mathcal{G}_{i}$.
	Then by Proposition~\ref{18-P} $(i)$,  $\mathcal{F}_{1} \vee \mathcal{F}_{2} = \mathcal{F}_{3}  \in \mathcal{E}^{1}_{[k]}$.

	Suppose otherwise, there exists a minimum $i_{0} \in [n]$ such that $\mathcal{G}_{i_{0}-1} \xrightarrow{3} \mathcal{G}_{i_{0}}$. That is, $\mathcal{G}_{i-1} \xrightarrow{1~or~2} \mathcal{G}_{i}$ for $1 \leq i \leq i_{0}-1$, and we replace a {properly} intersecting pair $X_{4}$ and $Y_{4}$ in $\mathcal{G}_{i_{0}-1}$ with $ X_{4} \cap Y_{4}$ and obtain $\mathcal{G}_{i_{0}}$. Note that  by Proposition~\ref{47-P}, $X_{4}, Y_{4} \in \mathcal{G}_{i_{0}-1}'$. By Proposition~\ref{52-P}, there exists $X_{5} \in \mathcal{F}_{2}$ such that $ X_{5} \subseteq X_{4} \cup Y_{4}$.
	For $v \in X_{5}$, since $\mathcal{G}_{i_{0}-1} \xrightarrow{3} \mathcal{G}_{i_{0}}$ and $ X_{5} \subseteq X_{4} \cup Y_{4}$,
	$  \mathcal{G}_{i_{0}}(v) < \mathcal{G}_{i_{0}-1}(v)$.
	By Proposition~\ref{clm-i-n-PIE-P}, $\mathcal{G}_{i_{0}-1}(v) \leq 2$, and thus $\mathcal{F}_{3}(v) + \mathcal{F}_{4}(v)= \mathcal{G}_{n}(v) \leq \mathcal{G}_{i_{0}}(v) \leq 1$. Since $ \cup \mathcal{F}_{4} \subseteq \cup \mathcal{F}_{3}$, $\mathcal{F}_{4}(v)=0$. So $ X_{5} \cap (\cup \mathcal{F}_{4})= \emptyset $; by Proposition \ref{17-P}, $\cup\mathcal{F}_{4} \subseteq (\cup\mathcal{F}_{2}) \setminus X_{5}$. Recall that since $X_{5} \in \mathcal{F}_{2} \in \mathcal{E}^{2}$, $ |P(X_{5})|-d_{{A \setminus (\cup_{i=1}^{k}F_{i} \cup E^{+}(x))}}^{-}(X_{5}) = [x, X_{5}]_{A \setminus \cup_{i=1}^{k}F_{i}} > 0$; thus $ [x, \bigcup \mathcal{F}_{4}]_{A \setminus \cup_{i=1}^{k}F_{i}} = [x, \cup \mathcal{F}_{2}]_{A \setminus \cup_{i=1}^{k}F_{i}}-[x, X_{5}]_{A \setminus \cup_{i=1}^{k}F_{i}} <[x, \bigcup \mathcal{F}_{4}]_{A \setminus \cup_{i=1}^{k}F_{i}}$, a contradiction to (\ref{H-4-2}).
\end{proof}

\begin{claim} \label{34}
	Suppose $X \in \mathcal{U}_{[k]}$ and $|P(Y)|-d_{{A \setminus (\cup_{i=1}^{k}F_{i} \cup E^{+}(x))}}^{-}(Y)=[x, Y]_{A \setminus \cup_{i=1}^{k}F_{i}}>0$. If $X \cap Y \neq \emptyset$, then $Y \subseteq X$.
\end{claim}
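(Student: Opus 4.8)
The strategy is to argue by contradiction: suppose $X\cap Y\neq\emptyset$ but $Y\not\subseteq X$. Since $X\in\mathcal{U}_{[k]}\subseteq\mathcal{E}^1_{[k]}\subseteq D(V)$, the family $\mathcal{U}_{[k]}$ is a family of disjoint sets, and $Y$ is a single set with $|P(Y)|-d^-_{A\setminus(\cup_iF_i\cup E^+(x))}(Y)=[x,Y]_{A\setminus\cup_iF_i}>0$, which is exactly the defining condition of a member of $\mathcal{E}^2$ (taking $\mathcal{F}=\{Y\}$). The first step is therefore to set $\mathcal{F}_1:=\mathcal{U}_{[k]}\in\mathcal{E}^1_{[k]}$ and $\mathcal{F}_2:=\{Y\}\in\mathcal{E}^2$, which puts us squarely in the \textbf{Process of PIEOs} setup with $I=[k]$, so that Claim \ref{33} and the machinery of Section 3 apply.

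The key observation is that if $X\cap Y\neq\emptyset$ and $Y\not\subseteq X$, then — because $X$ and $Y$ cannot be nested the other way either ($X\subseteq Y$ would force the only member of $\mathcal{U}_{[k]}$ meeting $Y$ to be inside $Y$, but then $Y$ itself, being a valid $\mathcal{E}^2$-set, could be used to enlarge $\mathcal{U}_{[k]}$ in the $\vee$-order, contradicting maximality; this sub-case needs to be checked) — the sets $X$ and $Y$ are \emph{properly} intersecting. Hence $\mathcal{G}_0=\mathcal{U}_{[k]}\uplus\{Y\}$ is not laminar, and by Claim \ref{33}, $\mathcal{U}_{[k]}\vee\{Y\}=\mathcal{F}_3\in\mathcal{E}^1_{[k]}$. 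But $\{Y\}\leq\mathcal{F}_3$ and $\mathcal{U}_{[k]}\leq\mathcal{F}_3$, while $\mathcal{F}_3\neq\mathcal{U}_{[k]}$ because the member of $\mathcal{F}_3$ containing $X$ strictly contains $X$ (it also contains $Y\setminus X\neq\emptyset$). This contradicts the maximality of $\mathcal{U}_{[k]}$ in $(\mathcal{E}^1_{[k]},\leq)$. I would carry this out in the order: (1) reduce to the PIEO setup; (2) dispose of the nested cases $X\subseteq Y$ and the trivial case, leaving the properly-intersecting case; (3) invoke Claim \ref{33} to get $\mathcal{U}_{[k]}\vee\{Y\}\in\mathcal{E}^1_{[k]}$; (4) observe it strictly dominates $\mathcal{U}_{[k]}$, contradicting maximality.

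The main obstacle I anticipate is step (2): carefully ruling out $X\subsetneq Y$ and making sure that $\mathcal{U}_{[k]}\vee\{Y\}$ really is strictly above $\mathcal{U}_{[k]}$ in all remaining configurations. When $X\subsetneq Y$ one does not immediately get a properly intersecting pair, so Claim \ref{33} is not directly applicable to $\{Y\}$; instead one should note that $\mathcal{U}_{[k]}$ with $X$ replaced by $Y$ (which is still a family of disjoint sets once we check $Y$ meets no other member of $\mathcal{U}_{[k]}$ — true because those members are disjoint from $X\subseteq Y$ only if\ldots actually this needs the observation that any member of $\mathcal{U}_{[k]}$ meeting $Y$ must, by the $\vee$-argument or by Claim \ref{45}(i)-type reasoning, be comparable to $Y$) lies in $\mathcal{E}^1_{[k]}$ and strictly dominates $\mathcal{U}_{[k]}$. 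Once these bookkeeping points about members of $\mathcal{U}_{[k]}$ that meet $Y$ are settled, the contradiction with maximality is immediate, and the claim follows.
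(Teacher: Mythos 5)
Your core idea is exactly the paper's proof: set $\mathcal{F}_1:=\mathcal{U}_{[k]}$, $\mathcal{F}_2:=\{Y\}\in\mathcal{E}^2$, apply Claim~\ref{33} to get $\mathcal{U}_{[k]}\vee\{Y\}\in\mathcal{E}^1_{[k]}$, and contradict the maximality of $\mathcal{U}_{[k]}$ in $(\mathcal{E}^1_{[k]},\leq)$. However, your proposal contains a misconception that leads you into an unnecessary and, as written, incomplete case analysis. You assert that ``when $X\subsetneq Y$ one does not immediately get a properly intersecting pair, so Claim~\ref{33} is not directly applicable.'' That is not what Claim~\ref{33} says: its hypotheses are only that $\mathcal{F}_1\in\mathcal{E}^1_{[k]}$ and $\mathcal{F}_2\in\mathcal{E}^2$. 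There is no requirement that $\mathcal{G}_0=\mathcal{F}_1\uplus\mathcal{F}_2$ contain a properly intersecting pair; indeed the proof of Claim~\ref{33} treats the laminar case head-on (``If $\mathcal{G}_0$ is laminar, then $\mathcal{F}_1\vee\mathcal{F}_2=\mathcal{F}_3\in\mathcal{E}^1_{[k]}$''). So whether $X\subsetneq Y$, or $X$ and $Y$ properly intersect, or $Y$ properly intersects some other member of $\mathcal{U}_{[k]}$, Claim~\ref{33} applies without any modification.

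Because you believed a separate argument was needed for the nested case, you sketched a replacement-of-$X$-by-$Y$ argument, and you yourself note that it hangs on an unverified assertion about which members of $\mathcal{U}_{[k]}$ can meet $Y$. That is a genuine gap in what you wrote, but one that is avoidable: drop the case split entirely. The only thing that must be checked --- and the only thing the paper checks --- is that $\{Y\}\nleq\mathcal{U}_{[k]}$. This is immediate: if some $Z\in\mathcal{U}_{[k]}$ had $Y\subseteq Z$, then $Z\cap X\supseteq Y\cap X\neq\emptyset$; since $\mathcal{U}_{[k]}\in D(V)$ is a family of pairwise disjoint sets this forces $Z=X$, hence $Y\subseteq X$, contradicting $Y\setminus X\neq\emptyset$. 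From $\{Y\}\nleq\mathcal{U}_{[k]}$ it follows that $\mathcal{U}_{[k]}<\mathcal{U}_{[k]}\vee\{Y\}$, and Claim~\ref{33} then contradicts the choice of $\mathcal{U}_{[k]}$ as a maximum element.
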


\begin{proof}
	Suppose otherwise, that is $Y \setminus X \neq \emptyset$. Let $\mathcal{F}_{1}=\mathcal{U}_{[k]}$ and $\mathcal{F}_{2}=\{ Y\}$. Then $ \mathcal{F}_{2} \nleq \mathcal{F}_{1}$ and thus $\mathcal{U}_{[k]} = \mathcal{F}_{1} < \mathcal{F}_{1} \vee \mathcal{F}_{2}$. By Claim~\ref{33}, $\mathcal{F}_{1} \vee \mathcal{F}_{2} \in \mathcal{E}^{1}_{[k]}$, contradicting that $\mathcal{U}_{[k]}$ is maximum in $\mathcal{E}^{1}_{[k]}$.
\end{proof}

The main idea of the proof for the sufficiency of Theorem~\ref{31}
is
(i) to find an $\alpha_{0} \in [l]$ such that (a) $\sum_{i \in I_{\alpha_{0}}}d_{F_{i}}^{+}(x) <c'_{\alpha_{0}}$,
and (b) after we update $c'_{\alpha_{0}}:=c'_{\alpha_{0}}-1$, (\ref{22}) still holds;
or (ii) to find an $\alpha_{0} \in [l]$, $i_{0} \in I_{\alpha_{0}}$ and $e_{_{0}} \in E^{+}(x) \setminus \cup_{i=1}^{k}F_{i}$ such that
$(a)$ $\sum_{i \in I_{\alpha_{0}}}d^{+}_{F_{i}}(x) < c'_{\alpha_{0}}$,
$(b)$ $h(e_{0}) \notin V(F_{i_{0}})$, and $(c)$ after we update  $F_{i_{0}}:=F_{i_{0}}+e_{0}$,  (\ref{11}) and (\ref{22}) still hold.  Continue this process until $\sum_{i \in I_{\alpha}}d_{F_{i}}^{+}(x)=c'_{\alpha}$ for all $\alpha \in [l]$; meanwhile, (\ref{22}) still holds. Then (\ref{22}) implies $|P(X)| \leq d_{{A \setminus (\cup_{i=1}^{k}F_{i} \cup E^{+}(x))}}^{-}(X)$ for $\emptyset \neq X \subseteq V$ by setting $I=[k]$ and $t=1$. So $|P(V)| \leq d_{A \setminus (\cup_{i=1}^{k}F_{i} \cup E^{+}(x) )}^{-}(V)=0$; and thus $P(V)=\emptyset$, that is $V(F_{i})-x \neq \emptyset$ for $i \in [k]$. By Theorem~\ref{16}, there exist arc-disjoint branchings $B_{i}$ with root set $V(F_{i})-x$ in $D-\cup_{i=1}^{k}F_{i}-x$ for $1 \leq i \leq k$. Finally, we obtain $F^{*}_{1}=F_{1} \cup B_{1}, \ldots, F^{*}_{k}=F_{k} \cup B_{k}$ as demanded.

The proof 
is {\bf by induction on the number $\lambda$ of $\alpha \in [l]$ such that $\sum_{i \in I_{\alpha}}d^{+}_{F_{i}}(x) < c'_{\alpha}$.}
For the base step $\lambda=0$, as explained above.  
For the induction step, suppose $\lambda \geq 1$; and without loss of generality, $\sum_{i \in I_{\alpha}}d_{F_{i}}^{+}(x)< c¡¯_{\alpha}$ for $1 \leq \alpha \leq \lambda$.

\begin{observation} \label{21}
	Let $\alpha_{0} \in [l]$, $i_{0} \in I_{\alpha_{0}}$ and $e_{_{0}} \in E^{+}_{A\setminus \cup_{i=1}^{k}F_{i} }(x) $ such that $h(e_{0}) \notin V(F_{i_{0}})$. Let $F_{i_{0}}:=F_{i_{0}} +e_{0}$ and $I \subseteq [k]$. Then
	\begin{itemize}
		\item[(i)] if $ I \nsupseteq I_{\alpha_{0}}$, (\ref{22}) still holds;  
		\item[(ii)] if $ I \supseteq I_{\alpha_{0}}$, $\sum_{j=1}^{t}(|P_{I}(X_{j})|-d^{-}_{{A \setminus (\cup_{i=1}^{k}F_{i} \cup E^{+}(x))}}(X_{j}))-\sum_{I_{\alpha} \subseteq I}(c'_{\alpha}-\sum_{i \in I_{\alpha}} d_{F_{i}}^{+}(x))$ is increased by at most $1$.
	\end{itemize}
\end{observation}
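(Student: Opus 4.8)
The plan is a direct bookkeeping argument that tracks how each ingredient of (\ref{22}) reacts to the single update $F_{i_0}:=F_{i_0}+e_0$. First I would isolate what actually changes. Adding $e_0$ to $F_{i_0}$ does exactly three things: the vertex set $V(F_{i_0})$ gains the single new vertex $h(e_0)$, the out-degree $d^+_{F_{i_0}}(x)$ increases by one, and $\cup_{i=1}^{k}F_i$ acquires the arc $e_0$. The key point is that $e_0$ has tail $x$, so $e_0\in E^+(x)$, and therefore $A\setminus(\cup_{i=1}^{k}F_i\cup E^+(x))$ is unchanged; consequently $d^-_{A\setminus(\cup_{i=1}^{k}F_i\cup E^+(x))}(X_j)$ is unchanged for every $j$. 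Thus I only need to follow $\sum_{j}|P_I(X_j)|$ and $\sum_{I_\alpha\subseteq I}\big(c'_\alpha-\sum_{i\in I_\alpha}d^+_{F_i}(x)\big)$.

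Next I would record the elementary effects of the update on these two quantities. Since $V(F_{i_0})$ only gains $h(e_0)$, the value $|P_I(X_j)|$ can change only when $i_0\in I$, and then only the term corresponding to the unique index $j_0$ (if any) with $h(e_0)\in X_{j_0}$ can decrease, and it decreases by at most one; in particular $\sum_j|P_I(X_j)|$ is nonincreasing and drops by at most one in total. On the degree side, $\sum_{i\in I_{\alpha_0}}d^+_{F_i}(x)$ goes up by one while $\sum_{i\in I_\alpha}d^+_{F_i}(x)$ stays the same for $\alpha\neq\alpha_0$; hence $\sum_{I_\alpha\subseteq I}\big(c'_\alpha-\sum_{i\in I_\alpha}d^+_{F_i}(x)\big)$ decreases by exactly one when $I_{\alpha_0}\subseteq I$ and is unchanged otherwise.

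Then I would split into the two cases, exploiting that $\{I_1,\ldots,I_l\}$ is a partition and $I$ is a union of blocks, so either $I_{\alpha_0}\subseteq I$ or $I_{\alpha_0}\cap I=\emptyset$. In case (i), $I\not\supseteq I_{\alpha_0}$ forces $I_{\alpha_0}\cap I=\emptyset$, so $i_0\notin I$ and no block contained in $I$ contains $i_0$; by the previous paragraph every summand on both sides of (\ref{22}) is unchanged, so (\ref{22}) still holds. In case (ii), $I\supseteq I_{\alpha_0}$ gives $i_0\in I$: the left-hand side of (\ref{22}) decreases by some $\delta\in\{0,1\}$ and the subtracted term on the right decreases by exactly one, so the displayed quantity, which is the left side minus that term, changes by $1-\delta\le 1$, that is, it increases by at most one.

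There is no real obstacle here; the argument is routine once the reduction is in place. The only two points that need to be stated with care are that the $d^-$-terms are frozen by the update (because $e_0\in E^+(x)$), and that the partition structure is exactly what lets us conclude $I_{\alpha_0}\cap I=\emptyset$ from $I\not\supseteq I_{\alpha_0}$ in case (i).
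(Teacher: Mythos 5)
Your proof is correct, and it is the natural bookkeeping argument that the paper has in mind (the paper states Observation~\ref{21} without proof, treating it as routine). You correctly identify the crucial points: that $e_0\in E^+(x)$ freezes all the $d^-_{A\setminus(\cup_{i=1}^k F_i\cup E^+(x))}$ terms, that disjointness of the $X_j$ bounds the total drop in $\sum_j|P_I(X_j)|$ by one, that the subtracted degree term drops by exactly one precisely when $I_{\alpha_0}\subseteq I$, and that the partition structure forces $I_{\alpha_0}\cap I=\emptyset$ whenever $I\not\supseteq I_{\alpha_0}$.
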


\begin{claim} \label{15}
	Suppose $ I \supseteq \cup_{\alpha=1}^{\lambda}I_{\alpha}  $. Then  $\mathcal{E}^{1}_{I} \subseteq \mathcal{E}^{1}_{[k]}$.
\end{claim}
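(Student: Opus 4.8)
The plan is to verify the two defining conditions of membership in $\mathcal{E}^{1}_{[k]}$ directly for an arbitrary $\mathcal{F}\in\mathcal{E}^{1}_{I}$, the key input being the constraint (\ref{22}) applied with index set $[k]$, combined with the hypothesis $I\supseteq\cup_{\alpha=1}^{\lambda}I_{\alpha}$. No induction or PIEO machinery is needed here; this is a short comparison between the ``$I$-version'' and the ``$[k]$-version'' of the quantity $H(\cdot,\mathcal{F})$.

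First I would record a bookkeeping observation. Since $I$ is a union of some of $I_{1},\ldots,I_{l}$ and $I\supseteq\cup_{\alpha=1}^{\lambda}I_{\alpha}$, every block $I_{\alpha}$ with $I_{\alpha}\nsubseteq I$ has $\alpha>\lambda$; by the normalization that $1,\ldots,\lambda$ are exactly the indices with $\sum_{i\in I_{\alpha}}d^{+}_{F_{i}}(x)<c'_{\alpha}$, such a block satisfies $\sum_{i\in I_{\alpha}}d^{+}_{F_{i}}(x)=c'_{\alpha}$ and thus contributes $0$ to the right-hand side of (\ref{22}). Hence
\[
\sum_{I_{\alpha}\subseteq I}\Big(c'_{\alpha}-\sum_{i\in I_{\alpha}}d^{+}_{F_{i}}(x)\Big)=\sum_{\alpha=1}^{l}\Big(c'_{\alpha}-\sum_{i\in I_{\alpha}}d^{+}_{F_{i}}(x)\Big);
\]
denote this common value by $\sigma$.

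Now take $\mathcal{F}\in\mathcal{E}^{1}_{I}$; then $\mathcal{F}\in D(V)$ already. Since $I\subseteq[k]$ we have $P_{I}(X)\subseteq P(X)$ for every $X\in\mathcal{F}$, so $|P(X)|-d^{-}_{A\setminus(\cup_{i=1}^{k}F_{i}\cup E^{+}(x))}(X)\geq|P_{I}(X)|-d^{-}_{A\setminus(\cup_{i=1}^{k}F_{i}\cup E^{+}(x))}(X)>0$, which is the strict-positivity condition for $\mathcal{E}^{1}_{[k]}$; summing over $\mathcal{F}$ also yields $H([k],\mathcal{F})\geq H(I,\mathcal{F})=\sigma$ (the equality because $\mathcal{F}\in\mathcal{E}^{1}_{I}$ and by the bookkeeping observation). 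On the other hand, applying (\ref{22}) with the disjoint family $\mathcal{F}$ and index set $[k]$ gives $H([k],\mathcal{F})\leq\sigma$. Therefore $H([k],\mathcal{F})=\sigma$, which is precisely the remaining condition for $\mathcal{F}\in\mathcal{E}^{1}_{[k]}$, completing the argument.

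I do not anticipate a genuine obstacle. The only point requiring care is the bookkeeping observation---namely that the blocks $I_{\alpha}$ not contained in $I$ contribute nothing to the right side of (\ref{22})---which is exactly where the hypothesis $I\supseteq\cup_{\alpha=1}^{\lambda}I_{\alpha}$ enters and without which the inclusion $\mathcal{E}^{1}_{I}\subseteq\mathcal{E}^{1}_{[k]}$ would be false.
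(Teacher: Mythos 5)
Your proof is correct and follows essentially the same route as the paper's: the key comparison $P_I(X)\subseteq P_{[k]}(X)$ combined with one application of (\ref{22}) for the index set $[k]$, sandwiching $H([k],\mathcal{F})$ between $H(I,\mathcal{F})=\sigma$ and $\sigma$. Your write-up is slightly more careful in two respects that the paper leaves implicit: you record the bookkeeping fact that blocks $I_\alpha\nsubseteq I$ contribute zero to (\ref{22}), and you explicitly check the strict-positivity condition in the definition of $\mathcal{E}^1_{[k]}$.
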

\begin{proof}
	Let $\mathcal{F} \in \mathcal{E}^{1}_{I}$. Since
     $|P_{I}(X)| \leq |P_{[k]}(X)|$ for $X \in \mathcal{F}$,
	$\sum_{\alpha=1}^{\lambda}(c'_{\alpha} - \sum_{i \in I_{\alpha}}d_{F_{i}}^{+}(x))=H(I, \mathcal{F}) \leq H([k], \mathcal{F})$. By (\ref{22}), $H([k], \mathcal{F}) \leq  \sum_{\alpha=1}^{\lambda}(c'_{\alpha} - \sum_{i \in I_{\alpha}}d_{F_{i}}^{+}(x))$. Hence, $H([k], \mathcal{F}) =  \sum_{\alpha=1}^{\lambda}(c'_{\alpha} - \sum_{i \in I_{\alpha}}d_{F_{i}}^{+}(x))$, that is, $\mathcal{F} \in \mathcal{E}^{1}_{[k]}$.
\end{proof}

\begin{claim}\label{35}
	Suppose $\alpha_{0} \in [\lambda]$. Then there exists $X_{0} \in \mathcal{U}_{[k]}$ and $i_{0} \in I_{\alpha_{0}}$ such that $X_{0} \cap V(F_{i_{0}})=\emptyset$.
\end{claim}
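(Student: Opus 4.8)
The plan is to argue by contradiction: suppose that for every $X \in \mathcal{U}_{[k]}$ and every $i \in I_{\alpha_0}$ we have $X \cap V(F_i) \neq \emptyset$. I want to extract from this a violation of the feasibility hypothesis~(\ref{22}) (or of~(\ref{27})), exploiting that $\sum_{i \in I_{\alpha_0}} d^+_{F_i}(x) < c'_{\alpha_0}$ and that $\mathcal{U}_{[k]}$, being the maximum member of $(\mathcal{E}^1_{[k]}, \leq)$, is \lq\lq saturated\rq\rq\ in a way that blocks increasing any root degree in $I_{\alpha_0}$. First I would observe what the contrary assumption says about the quantities $|P_I(X)|$: if $X \cap V(F_i) \neq \emptyset$ for all $i \in I_{\alpha_0}$, then $P_{I_{\alpha_0}}(X) = \emptyset$ for every $X \in \mathcal{U}_{[k]}$, so $|P(X)| = |P_{\overline{I_{\alpha_0}}}(X)|$ on all of $\mathcal{U}_{[k]}$; hence for $I' := \overline{I_{\alpha_0}}$ we get $H([k], \mathcal{U}_{[k]}) = H(I', \mathcal{U}_{[k]})$, while $H([k], \mathcal{U}_{[k]}) = \sum_{\alpha=1}^{l}(c'_\alpha - \sum_{i \in I_\alpha} d^+_{F_i}(x))$ because $\mathcal{U}_{[k]} \in \mathcal{E}^1_{[k]}$.

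Next I would combine this with the constraint~(\ref{22}) applied to the family $\mathcal{U}_{[k]}$ and the index set $I' = \overline{I_{\alpha_0}}$. Since $I_{\alpha_0} \not\subseteq I'$, the only terms $c'_\alpha - \sum_{i \in I_\alpha} d^+_{F_i}(x)$ that survive on the right side of~(\ref{22}) are those with $I_\alpha \subseteq I'$, i.e.\ all $\alpha \neq \alpha_0$. Thus
\[
H(I', \mathcal{U}_{[k]}) \;\leq\; \sum_{I_\alpha \subseteq I'}\bigl(c'_\alpha - \textstyle\sum_{i \in I_\alpha} d^+_{F_i}(x)\bigr) \;=\; \sum_{\alpha \neq \alpha_0}\bigl(c'_\alpha - \textstyle\sum_{i \in I_\alpha} d^+_{F_i}(x)\bigr).
\]
On the other hand, from the previous paragraph, $H(I', \mathcal{U}_{[k]}) = H([k], \mathcal{U}_{[k]}) = \sum_{\alpha=1}^{l}(c'_\alpha - \sum_{i \in I_\alpha} d^+_{F_i}(x))$, which exceeds the right-hand side above by exactly $c'_{\alpha_0} - \sum_{i \in I_{\alpha_0}} d^+_{F_i}(x) > 0$ (using $\alpha_0 \in [\lambda]$). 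This is the desired contradiction.

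The main obstacle I anticipate is making the first step fully rigorous — namely confirming that $\mathcal{U}_{[k]}$ can indeed be \lq\lq read against\rq\rq\ the index set $I' = \overline{I_{\alpha_0}}$ in the way claimed. The subtlety is that the definition of $\mathcal{E}^1_I$ carries a strict-positivity requirement, $|P_I(X)| - d^-_{A \setminus (\cup F_i \cup E^+(x))}(X) > 0$ for all $X$ in the family, and one must check this is preserved when passing from $I = [k]$ to $I' = \overline{I_{\alpha_0}}$; but under the contrary assumption $P_I(X) = P(X)$ for $X \in \mathcal{U}_{[k]}$ (since $P_{I_{\alpha_0}}(X) = \emptyset$), so the positivity transfers verbatim, and moreover $\mathcal{U}_{[k]} \in \mathcal{E}^1_{I'}$ as well. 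If one wants to avoid invoking $\mathcal{E}^1_{I'}$ membership, one can instead apply~(\ref{22}) directly as a raw inequality for the specific family $\mathcal{U}_{[k]}$ and index set $I'$, which is all that is needed. A secondary point to handle carefully is the degenerate case $\mathcal{U}_{[k]} = \emptyset$: then $H([k], \emptyset) = 0$ forces $\sum_\alpha (c'_\alpha - \sum_{i \in I_\alpha} d^+_{F_i}(x)) = 0$, contradicting $\alpha_0 \in [\lambda]$, so this case cannot arise and the claim holds vacuously-then-contradictorily. With these checks in place the contradiction closes the proof.
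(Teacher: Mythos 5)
Your argument is correct and is essentially identical to the paper's proof: both argue by contradiction, observe that the contrary assumption gives $P(X)=P_{[k]\setminus I_{\alpha_0}}(X)$ for all $X\in\mathcal{U}_{[k]}$ so that $H([k]\setminus I_{\alpha_0},\mathcal{U}_{[k]})=H([k],\mathcal{U}_{[k]})=\sum_{\alpha}(c'_\alpha-\sum_{i\in I_\alpha}d^+_{F_i}(x))$, and then contradict the raw inequality (\ref{22}) applied to the family $\mathcal{U}_{[k]}$ with $I=[k]\setminus I_{\alpha_0}$, using $c'_{\alpha_0}-\sum_{i\in I_{\alpha_0}}d^+_{F_i}(x)>0$. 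Your observation that one need not verify $\mathcal{U}_{[k]}\in\mathcal{E}^1_{[k]\setminus I_{\alpha_0}}$ but can apply (\ref{22}) as a raw inequality is exactly what the paper does.
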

\begin{proof}
	Suppose otherwise, for any $X \in \mathcal{U}_{[k]}$ and $i \in I_{\alpha_{0}}$, $X \cap V(F_{i}) \neq \emptyset$; thus $P_{[k]}(X)=P_{[k]\setminus I_{\alpha_{0}}}(X)$.
	Note that $\mathcal{U}_{[k]} \in \mathcal{E}^{1}_{[k]}$, and  $\sum_{i \in I_{\alpha_{0}}}d_{F_{i}}^{+}(x)<c'_{\alpha_{0}}$. Then
	\[
	H([k] \setminus I_{\alpha_{0}}, \mathcal{U}_{[k]})=H([k], \mathcal{U}_{[k]})= \sum_{\alpha=1}^{\lambda}(c'_{\alpha} - \sum_{i \in I_{\alpha}}d_{F_{i}}^{+}(x)) > \sum_{\alpha \in [\lambda]\setminus \{ \alpha_{0}\}}(c'_{\alpha} - \sum_{i \in I_{\alpha}}d_{F_{i}}^{+}(x)),
	\]
	However, by (\ref{22}), $H([k] \setminus I_{\alpha_{0}}, \mathcal{U}_{[k]}) \leq \sum_{\alpha \in [\lambda]\setminus \{ \alpha_{0}\}}(c'_{\alpha} - \sum_{i \in I_{\alpha}}d_{F_{i}}^{+}(x))$, a contradiction.
\end{proof}

\noindent {\bf Case 1} (of the induction step): Assume $\lambda=1$.

If $\mathcal{E}^{1}_{[k]} = \emptyset$, then by Claim~\ref{15}, for $ I \supseteq I_{1}$, $\mathcal{E}^{1}_{I} = \emptyset$.
Next we show that, for $I \supseteq I_{1} $ and $\mathcal{F} \in D(V)$,
$H(I,\mathcal{F}) < c'_{1}-\sum_{i \in I_{1}}d_{F_{i}}^{+}(x)$.
Therefore, after we update $c'_{1}:=c'_{1}-1$,  $(\ref{22})$ still holds.

Suppose otherwise, that is, there exists $\mathcal{F}_{0} \in D(V)$ such that $H(I, \mathcal{F}_{0})=c'_{1}-\sum_{i \in I_{1}}d_{F_{i}}^{+}(x)$. Since $c'_{1}-\sum_{i \in I_{1}}d_{F_{i}}^{+}(x) >0$ and  $H(I, \emptyset)= 0$,
$\mathcal{F}_{1}=\{ X \in \mathcal{F}_0: |P_{I}(X)|-d_{{A \setminus (\cup_{i=1}^{k}F_{i} \cup E^{+}(x))}}^{-}(X) >0\} \neq \emptyset$. Since $H(I, \mathcal{F}_0 \setminus \mathcal{F}_{1}) \leq 0$,
\[
H(I, \mathcal{F}_{1}) \geq H(I, \mathcal{F}_{1})+H(I, \mathcal{F}_0 \setminus \mathcal{F}_{1})   = H(I, \mathcal{F}_0)=c'_{1}-\sum_{i \in I_{1}}d_{F_{i}}^{+}(x).
\]
On the other hand, by (\ref{22}), $H(I, \mathcal{F}_{1}) \leq c'_{1}-\sum_{i \in I_{1}}d_{F_{i}}^{+}(x)$. Hence, $H(I, \mathcal{F}_{1}) = c'_{1}-\sum_{i \in I_{1}}d_{F_{i}}^{+}(x)$. This shows that $\mathcal{F}_{1} \in \mathcal{E}_{I}^{1}$, a contradiction to $\mathcal{E}_{I}^{1}=\emptyset $.

Next, suppose $\mathcal{E}^{1}_{[k]} \neq \emptyset$. By Claim~\ref{35}, there exists $X_{0} \in \mathcal{U}_{[k]}$ and $i_{0} \in I_{1}$ such that $X_{0} \cap V(F_{i_{0}})=\emptyset$. By Claim~\ref{45}, there exists $Y_{0} \in \mathcal{V}_{[k]}$ such that $Y_{0} \subseteq X_{0}$. 
Since $Y_{0} \in \mathcal{V}_{[k]} \in \mathcal{E}_{[k]}^{1}$ and (\ref{11}) holds, $0 < |P(Y_{0})|-d_{{A \setminus (\cup_{i=1}^{k}F_{i} \cup E^{+}(x))}}^{-}(Y_{0}) \leq [x, Y_{0}]_{A \setminus \cup_{i=1}^{k}F_{i}} $. So there exists $e_{0} \in E_{A \setminus \cup_{i=1}^{k}F_{i}}^{+}(x) $ with $h(e_{0}) \in Y_{0}$. Update $F_{i_{0}}:=F_{i_{0}}+e_{0}$. 

To show (\ref{11}) still holds, it suffices to show
for $X \subseteq V$ such that $h(e_{0}) \in X$ and $|P(X)|=d_{{A \setminus \cup_{i=1}^{k}F_{i}}}^{-}(X)$ before we update $F_{i_{0}}: =F_{i_{0}}+e_{0} $, $(\ref{11})$ still holds. Since $h(e_{0}) \in X_{0} \cap X$, $X \cap X_{0} \neq \emptyset$. By Claim~\ref{34}, $X \subseteq X_{0}$. Since $X_{0} \cap V(F_{i_{0}})=\emptyset$, $X \cap V(F_{i_{0}})=\emptyset$. Hence, both $d_{{A \setminus \cup_{i=1}^{k}F_{i}}}^{-}(X) $ and $|P(X)|$ are decreased by $1$, and thus (\ref{11}) still holds.

To show (\ref{22}) still holds, by Observation~\ref{21}, it suffices to show
for $I \supseteq I_{1}$ and $\mathcal{F} \in \mathcal{E}^{1}_{I}$, (\ref{22}) still holds. By Claim~\ref{15}, $ \mathcal{F} \in \mathcal{E}^{1}_{[k]}$. So $\mathcal{V}_{[k]} \leq \mathcal{F} \leq \mathcal{U}_{[k]}$, and there exists a $Z_{0} \in \mathcal{F}$ such that $Y_{0} \subseteq Z_{0} \subseteq X_{0}$. Since $X_{0} \cap V(F_{i_{0}}) =\emptyset$, $Z_{0} \cap V(F_{i_{0}})=\emptyset$ and $|P_{I}(Z_{0})|$ is decreased by $1$.
This cancels the increase of  $d_{F_{i_{0}}}^{+}(x)$, and thus (\ref{22}) still holds.

\noindent {\bf Case 2} (of the induction step): Assume $\lambda \geq 2$.

Suppose $p,q \in [\lambda]$ and $p \neq q$. Since (\ref{22}) holds for $\emptyset \neq I \subseteq [k]$ that is the union of some elements in $\{I_{1}, \ldots, I_{l}\}-I_{p}- I_{q}+(I_{p}\cup I_{q})$,
by induction hypothesis, $F_{i}$ can be completed to spanning $x$-arborescence $F'_{i}$ for $1 \leq i \leq k$ such that $\sum_{i \in I_{p} \cup I_{q}}d_{F'_{i}}^{+}(x) \leq c'_{p}+c'_{q}$ and $\sum_{i \in I_{\alpha}} d_{F'_{i}}^{+}(x) \leq c'_{\alpha} $ for $\alpha \in [l]\setminus \{ p,q\}$.
Note that either  $\sum_{i \in I_{p}}d_{F'_{i}}^{+}(x) \leq c'_{p}$ or  $\sum_{i \in I_{q}}d_{F'_{i}}^{+}(x) \leq c'_{q}$.

\noindent {\bf Subcase 2.1} Assume $\sum_{i \in I_{p}}d_{F_{i}}^{+}(x) < \sum_{i \in I_{p}}d_{F'_{i}}^{+}(x) \leq c'_{p}$.

Then there exists $e_{0} \in E^{+}_{F'_{i_{0}} \setminus F_{i_{0}}}(x)$ for some $i_{0} \in I_{p}$. Update $F_{i_{0}}:=F_{i_{0}}+e_{0}$. Due to the existence of $F'_{1}, \ldots, F'_{k}$, by Theorem~\ref{10}, $(\ref{11})$ still holds. For $I \supseteq I_{p}$,  since $\sum_{i \in I}d_{F'_{i}}^{+}(x) \leq \sum_{I_{\alpha} \subseteq I}c'_{\alpha}$, (\ref{19}) holds and thus (\ref{22}) still holds. And by Observation~\ref{21} $(i)$, (\ref{22}) holds for $I \nsupseteq I_{p}$. Then we are done with this subcase.

\noindent {\bf Subcase 2.2} Assume $\sum_{i \in I_{q}}d_{F_{i}}^{+}(x) < \sum_{i \in I_{q}}d_{F'_{i}}^{+}(x) \leq c'_{q}$.

The proof is the same as  Subcase $2.1$.

\noindent {\bf Subcase 2.3 (the remaining case) } 
We have either $\sum_{i \in I_{p}}d_{F_{i}}^{+}(x) = \sum_{i \in I_{p}}d_{F'_{i}}^{+}(x) \leq c'_{p}$ or $\sum_{i \in I_{q}}d_{F_{i}}^{+}(x) = \sum_{i \in I_{q}}d_{F'_{i}}^{+}(x) \leq c'_{q}$.
Recall $ \sum_{i \in I_{p}}d_{F_{i}}^{+}(x) <c'_{p}$ and $ \sum_{i \in I_{q}}d_{F_{i}}^{+}(x) <c'_{q}$.

If $\sum_{i \in I_{p}}d_{F_{i}}^{+}(x) = \sum_{i \in I_{p}}d_{F'_{i}}^{+}(x) < c'_{p}$, then for $ I_{p} \subseteq I \subseteq [k] \setminus I_{q}$, since $F_{i}$ can be completed to $F'_{i}$ for $i \in I$  such that
\[
\sum_{i \in I}d_{F'_{i}}^{+}(x) = \sum_{i \in I \setminus I_{p}}d_{F'_{i}}^{+}(x) + \sum_{i \in I_{p}}d_{F'_{i}}^{+}(x) \leq \sum_{I_{\alpha} \subseteq I, \alpha \neq p}c'_{\alpha}+\sum_{i \in I_{p}}d_{F_{i}}^{+}(x) < \sum_{I_{\alpha} \subseteq I}c'_{\alpha},
\]
(\ref{19}) holds but the equality of (\ref{19}) does not hold.
This gives for $ I_{p} \subseteq I \subseteq [k] \setminus I_{q}$,  $\mathcal{E}^{1}_{I}=\emptyset$.

If  $\sum_{i \in I_{q}}d_{F_{i}}^{+}(x) = \sum_{i \in I_{q}}d_{F'_{i}}^{+}(x) < c'_{q}$, use similar arguments as above.

We have  either  $(2.3.i)$~$\mathcal{E}^{1}_{I}=\emptyset$ for $ I_{p} \subseteq I \subseteq [k] \setminus I_{q}$ or $(2.3.ii)$~$\mathcal{E}^{1}_{I}=\emptyset$ for $ I_{q} \subseteq I \subseteq [k] \setminus I_{p}$.

\begin{claim}\label{23}
	There exists $\alpha_{0} \in [\lambda]$ such that  if  $I^* \supseteq I_{\alpha_{0}}$ and $\mathcal{E}^{1}_{I^*} \neq \emptyset$, then $I^* \supseteq \cup_{\alpha=1}^{\lambda}I_{\alpha} $.
\end{claim}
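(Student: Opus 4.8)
The plan is to run a case analysis driven entirely by the trichotomy $(2.3.i)$ vs.\ $(2.3.ii)$ that we have just established for \emph{every} pair $p \neq q$ in $[\lambda]$. Fix the complete graph on vertex set $[\lambda]$ and orient each edge $\{p,q\}$ by recording which of the two alternatives holds: if $(2.3.i)$ holds (i.e.\ $\mathcal{E}^1_I = \emptyset$ for all $I_p \subseteq I \subseteq [k]\setminus I_q$), orient the edge from $p$ to $q$; if $(2.3.ii)$ holds, orient it from $q$ to $p$ (if both hold, pick either orientation). This gives a tournament on $[\lambda]$. The key structural fact I would use is that every tournament has a vertex of out-degree $\ge (\lambda-1)/2 \ge 1$ — in fact I will want a vertex that beats \emph{all} others, i.e.\ a \emph{king} or, better, a source; since a tournament need not have a source, the right move is instead to argue by induction on $\lambda$ within this claim, peeling off one coordinate at a time using the orientation, or equivalently to take any Hamiltonian path $p_1 \to p_2 \to \cdots \to p_\lambda$ in the tournament and set $\alpha_0 := p_1$.

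With $\alpha_0 = p_1$ chosen as the start of a Hamiltonian path of the tournament, I claim it has the required property. Suppose $I^* \supseteq I_{\alpha_0}$ and $\mathcal{E}^1_{I^*} \neq \emptyset$; I must show $I^* \supseteq \cup_{\alpha=1}^\lambda I_\alpha$, i.e.\ $I_{p_j} \subseteq I^*$ for every $j \in [\lambda]$. Proceed by induction on $j$. The base case $j=1$ is the hypothesis $I^* \supseteq I_{\alpha_0}$. For the step, assume $I_{p_1}, \ldots, I_{p_{j-1}} \subseteq I^*$ but, for contradiction, $I_{p_j} \not\subseteq I^*$; since $\{I_1,\ldots,I_l\}$ is a partition and $I^*$ is a union of parts, $I_{p_j} \cap I^* = \emptyset$, so $I_{p_j} \subseteq [k]\setminus I^* \subseteq [k]\setminus I_{p_{j-1}}$ (the last inclusion because $I_{p_{j-1}} \subseteq I^*$). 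Now the edge $p_{j-1} \to p_j$ of the Hamiltonian path says alternative $(2.3.i)$ holds for the ordered pair $(p_{j-1}, p_j)$: $\mathcal{E}^1_I = \emptyset$ for every $I$ with $I_{p_{j-1}} \subseteq I \subseteq [k]\setminus I_{p_j}$. But $I^*$ is exactly such an $I$: $I_{p_{j-1}} \subseteq I^*$ by the induction hypothesis, and $I^* \subseteq [k]\setminus I_{p_j}$ since $I_{p_j} \cap I^* = \emptyset$. Hence $\mathcal{E}^1_{I^*} = \emptyset$, contradicting our assumption. Therefore $I_{p_j} \subseteq I^*$, completing the induction, and thus $I^* \supseteq \cup_{\alpha=1}^\lambda I_\alpha$.

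I would also want to double-check the degenerate boundary: the claim is stated for the induction step with $\lambda \ge 2$, so the tournament on $[\lambda]$ has at least one edge and a Hamiltonian path genuinely exists (every tournament is traceable, a classical fact of Rédei that I will just cite or prove in one line by insertion); when $\lambda = 2$ this is nothing but a direct reading of the dichotomy $(2.3.i)$/$(2.3.ii)$ for the single pair $\{p_1,p_2\}$. One subtlety to be careful about in the write-up: the pair-statements $(2.3.i)$, $(2.3.ii)$ were derived under the running hypotheses that $\sum_{i\in I_p} d^+_{F_i}(x) < c'_p$ and $\sum_{i\in I_q} d^+_{F_i}(x) < c'_q$, which is exactly why we restrict attention to indices in $[\lambda]$; I should make explicit at the start that these hold for all $\alpha \in [\lambda]$ by the normalization at the top of the induction step. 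The main obstacle is not any hard estimate — the submodular machinery is already packaged into the emptiness of the $\mathcal{E}^1_I$'s — but rather getting the logical bookkeeping of the tournament argument exactly right, in particular ensuring that the set $I^*$ appearing in the induction step really does sit in the interval $[I_{p_{j-1}},\, [k]\setminus I_{p_j}]$ to which the relevant alternative applies; that is the step I expect to need the most care.
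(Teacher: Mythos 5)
Your argument is correct and follows essentially the same approach as the paper: construct a tournament on $[\lambda]$ from the $(2.3.i)$/$(2.3.ii)$ dichotomy, choose $\alpha_{0}$ to be a vertex from which every other vertex is reachable, and then use an outgoing arc to a coordinate not yet absorbed into $I^{*}$ to force $\mathcal{E}^{1}_{I^*}=\emptyset$. The only cosmetic difference is that you realize the reachability via a Hamiltonian path (R\'edei) and induct along the path, whereas the paper picks $\alpha_{0}$ in the top strongly connected component and extracts a single crossing arc from $R=\{\alpha: I_{\alpha}\subseteq I^{*}\}$ to its complement; both yield the same contradiction.
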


\begin{proof}
	Construct a tournament $T$ on vertex set $V(T)=[\lambda]$: for $p,q \in [\lambda]$, if $(2.3.i)$ holds, then let $pq \in A(T)$;
	else, $(2.3.ii)$ holds, then let $qp \in A(T)$. 
	Pick an $\alpha_{0} \in [\lambda]$ such that every vertex of $T$ is reachable from $\alpha_{0}$. Suppose $I^* \supseteq I_{\alpha_{0}}$ and $I^* \nsupseteq \cup_{\alpha=1}^{\lambda}I_{\alpha}$, let $R:=\{\alpha \in [\lambda] | I_{\alpha} \subseteq I^* \}$. Then $\alpha_{0} \in R \subsetneq [\lambda]$. Since every vertex in $[\lambda] \setminus R$ is reachable from $\alpha_{0}$, there exists an arc $st$ with $s \in R $ and $t \in [\lambda] \setminus R$. Since $I_{s} \subseteq I^* \subseteq [k]\setminus I_{t} $, by the definition of $A(T)$,
	$\mathcal{E}^{1}_{I^*}=\emptyset$. Hence,  $I^* \supseteq I_{\alpha_{0}}$ and $\mathcal{E}^{1}_{I^*} \neq \emptyset$ implies $I^* \supseteq \cup_{\alpha=1}^{\lambda}I_{\alpha} $.
\end{proof}

Suppose $\alpha_{0}$ is as defined in Claim~\ref{23}.
For $I \supseteq I_{\alpha_{0}}$,  we  have  $\mathcal{E}^{1}_{I} \subseteq \mathcal{E}^{1}_{[k]}$.
To see this, consider
$I^* \supseteq I_{\alpha_{0}}$  such that
$\mathcal{E}^{1}_{I^*} \neq \emptyset$. Then by Claim~\ref{23}, $I^* \supseteq \cup_{\alpha=1}^{\lambda}I_{\alpha}$; by Claim~\ref{15}, $\mathcal{E}^{1}_{I^*} \subseteq \mathcal{E}^{1}_{[k]}$.

If $\mathcal{E}^{1}_{[k]} = \emptyset$, then $\mathcal{E}^{1}_{I} = \emptyset$ for $I \supseteq I_{\alpha_{0}}$. Update $c'_{\alpha_{0}}:=c'_{\alpha_{0}}-1$, and (\ref{22}) still holds. 

If $ \mathcal{E}^{1}_{[k]} \neq \emptyset$, by Claim~\ref{35}, there exists $X_{0} \in \mathcal{U}_{[k]}$ and $i_{0} \in I_{\alpha_{0}}$ such that $X_{0} \cap V(F_{i_{0}})=\emptyset$.
%
By the same arguments as in Case 1 ($\lambda=1$),
there exist $Y_{0} \in \mathcal{V}_{[k]}$ such that $Y_{0} \subseteq X_{0}$;  
$e_{0} \in E_{A \setminus \cup_{i=1}^{k}F_{i}}^{+}(x) $ with $h(e_{0}) \in Y_{0}$;
after we update $F_{i_{0}}:= F_{i_{0}}+e_{0}$,  (\ref{11}) and (\ref{22}) still hold.
This finishes the proof of Subcase $2.3$, and  Theorem~\ref{31}.

\subsection{Branching packing with their root degrees bounded above }

As an application of Theorem~\ref{31}, we give a characterization for the existence of arc-disjoint $c^{-}$-branchings, whose root sets contain given vertices.

\begin{cor} \label{24}
	For digraph $D$ and integer $k>0$, let $I_{1},\ldots, I_{l}$ be a partition of $[k]$, $c'_{1}, \ldots, c'_{l}$ be nonnegative integers and $U_{1}, \ldots, U_{k}$ be subsets of $V(D)$ with $\sum_{i \in I_{\alpha}}|U_{i}| \leq c'_{\alpha}$ for $1 \leq \alpha \leq l$. Then there exist $k$ arc-disjoint branchings $B_{1}, \ldots, B_{k}$ in $D$ such that $U_{i} \subseteq R(B_{i}) $ for $1 \leq i \leq k$ and $\sum_{i \in I_{\alpha}} |R(B_{i})| \leq c'_{\alpha}$ for $1 \leq \alpha \leq l$ if and only if for any disjoint $X_{1}, \ldots, X_{t} \subseteq V(D)$ and any $I \subseteq [k]$ that is the union of some of $I_{1}, \ldots, I_{l}$,
	\begin{equation}\label{2}
	\sum_{j=1}^{t}(|P_{I}(X_{j})|-d_D^{-}(X_{j}))\leq \sum_{I_{\alpha} \subseteq I}(c'_{\alpha}-\sum_{i \in I_{\alpha}}|U_{i}|),
	\end{equation}
	where $ P_{I}(X)=\{ i \in I :  X \cap U_{i} =\emptyset\}. $
\end{cor}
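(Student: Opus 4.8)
The plan is to obtain Corollary~\ref{24} as a direct consequence of Theorem~\ref{31} via the familiar device of adjoining a new source vertex.

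First I would build an auxiliary digraph $D'=(V+x,A')$, where $V=V(D)$, by adding to $D$ a new vertex $x$ and, for every $u\in V$, exactly $k$ parallel arcs from $x$ to $u$. For each $i\in[k]$ let $F_i$ be the $x$-arborescence with $V(F_i)=U_i+x$ whose arcs are one copy of $xu$ for each $u\in U_i$, the copies chosen so that $F_1,\ldots,F_k$ are pairwise arc-disjoint; this is possible since each $u$ lies in at most $k$ of the sets $U_i$ and $k$ copies of $xu$ are available. Then $d^{+}_{F_i}(x)=|U_i|$, so $\sum_{i\in I_\alpha}d^{+}_{F_i}(x)=\sum_{i\in I_\alpha}|U_i|\le c'_\alpha$, and the hypotheses of Theorem~\ref{31} hold for $D'$ and these $F_i$.

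Next I would check that the two conditions of Theorem~\ref{31} say for $D'$ exactly what~(\ref{2}) says for $D$. Because $x\notin V$, for $X\subseteq V$ one has $P_I(X)=\{i\in I:X\cap V(F_i)=\emptyset\}=\{i\in I:X\cap U_i=\emptyset\}$, matching the $P_I$ of the statement. Since every added arc has tail $x$ and each $F_i$ consists of such arcs, $A'\setminus(\cup_{i=1}^{k}F_i\cup E^{+}(x))=E(D)$, so after substituting $d^{+}_{F_i}(x)=|U_i|$ the inequality~(\ref{22}) becomes precisely~(\ref{2}). For condition~(i), observe that $A'\setminus\cup_{i=1}^{k}F_i$ is $E(D)$ together with $k-|\{i:u\in U_i\}|$ copies of $xu$ for each $u$; picking any $u_0\in X$ and using that $u_0\notin U_i$ for every $i\in P(X)$ gives $k-|\{i:u_0\in U_i\}|\ge|P(X)|$, hence $d^{-}_{A'\setminus\cup_i F_i}(X)\ge d^{-}_D(X)+|P(X)|$; thus~(\ref{11}) is automatic and condition~(i) imposes nothing.

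It then remains to pass between branchings and arborescences. From spanning $x$-arborescences $F^{*}_1,\ldots,F^{*}_k$ of $D'$ completing the $F_i$ with $\sum_{i\in I_\alpha}d^{+}_{F^{*}_i}(x)\le c'_\alpha$, I would set $B_i:=F^{*}_i-x$; since $D'-x=D$, each $B_i$ is a branching of $D$ with $R(B_i)=N^{+}_{F^{*}_i}(x)$, so $|R(B_i)|=d^{+}_{F^{*}_i}(x)$, the cardinality bounds transfer, and $U_i\subseteq R(B_i)$ because $F_i\subseteq F^{*}_i$. Conversely, from branchings $B_i$ as in the statement I would re-attach $x$ by adding, for each $i$, one fresh copy of $xr$ for every $r\in R(B_i)$ (reusing on $U_i$ the copies already assigned to $F_i$), obtaining arc-disjoint spanning $x$-arborescences $F^{*}_i$ of $D'$ completing the $F_i$ with $d^{+}_{F^{*}_i}(x)=|R(B_i)|$. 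Hence the branchings required by Corollary~\ref{24} exist if and only if the $F_i$ can be completed as in Theorem~\ref{31}, that is, if and only if~(\ref{2}) holds. The only delicate point in the whole argument is the bookkeeping of the parallel copies of the arcs leaving $x$, so that $F_i\subseteq F^{*}_i$ and arc-disjointness of $F^{*}_1,\ldots,F^{*}_k$ hold simultaneously; everything else is substitution into the inequalities of Theorem~\ref{31}.
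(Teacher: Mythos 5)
Your proof follows exactly the construction the paper uses: adjoin a new source $x$ with $k$ parallel arcs to each vertex of $D$, seed $F_i$ with one copy of $xu$ for each $u\in U_i$, and invoke Theorem~\ref{31}, after noting that (\ref{11}) holds automatically and (\ref{22}) reduces to (\ref{2}). The only difference is that you spell out the verification that (\ref{11}) is automatic (the paper just asserts it), which is a correct and reasonable elaboration.
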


Note that the definition of $ P_{I}(X)$ in Corollary~\ref{24} coincides with the original definition of $P_{I}(X)$ (explained next in the proof).

\begin{proof}
	We obtain a new digraph $D'$ from $D$ by adding a new vertex $x$ and $k $ parallel arcs from $x$ to each vertex in $V(D)$. Let $\{F_{i}\}_{i=1}^{k}$ be a family of arc-disjoint $x$-arborescences such that $V(F_{i})=U_{i}+x$ and $A(F_{i})$ consists of arcs from $x $ to each vertex in $U_{i}$ for $1 \leq i \leq k$. Then there exist arc-disjoint branchings $B_{1}, \ldots, B_{k}$ in $D$ such that $U_{i} \subseteq R(B_{i}) $ for $1 \leq i \leq k$ and $\sum_{i \in I_{\alpha}} |R(B_{i})| \leq c'_{\alpha}$ for $1 \leq \alpha \leq l$, if and only if in $D'$, $F_{1}, \ldots, F_{k}$ can be completed to $k$ arc-disjoint spanning $x$-arborescences $F^{*}_{1}, \ldots, F^{*}_{k}$  such that $\sum_{i \in I_{\alpha}}d^{+}_{F^{*}_{i}}(x) \leq c'_{\alpha}$ for $1 \leq \alpha \leq l$. By Theorem~\ref{31}, such $x$-arborescences exist if and only if (\ref{11}) and (\ref{22}) hold.
	Note that in $D'$, (\ref{11}) clearly holds, and (\ref{22}) is exactly (\ref{2}). This finishes the proof.
\end{proof}

In Corollary~\ref{24}, by setting $l=k$, $I_{i}=\{ i \}$ for $i \in [k]$, $U_{1}=\ldots =U_{k}=\emptyset$, we have Corollary \ref{BF},   which is first discovered by B\'{e}rczi and Frank \cite[Theorem 23]{berczi1}. 
Note that in our approach, it comes from arborescence augmentation, this is different  than  \cite{berczi1}.

\subsection{Proof of Theorem~\ref{32}}

The necessity is due to Theorem~\ref{1} and \ref{31}. 
We prove the sufficiency by induction on the number $\tau$ of $\alpha \in [l]$ such that $\sum_{i \in I_{\alpha}}d^{+}_{F_{i}}(x) < c_{\alpha} $. If $\tau=0$, then (\ref{4}) implies (\ref{11}) (by setting $t=1$ and $I=[k]$ in (\ref{4})), we are done by Theorem~\ref{31}.

Suppose $\tau \geq 1$, and $\alpha_{0} \in [l]$ with $\sum_{i \in I_{\alpha_{0}}} d^{+}_{F_{i}}(x) < c_{\alpha_{0}}$. By setting $c_{\alpha_{0}} := \sum_{i \in I_{\alpha_{0}}}d^{+}_{F_{i}}(x)$ and the induction hypothesis, $F_{1}, \ldots, F_{k}$ can be completed to arc disjoint spanning $x$-arborescence $F'_{1}, \ldots, F'_{k}$ such that $ c_{\alpha} \leq \sum_{i \in I_{\alpha}}d^{+}_{F'_{i}}(x) \leq c'_{\alpha}$ for $\alpha \in [l]\setminus \{ \alpha_{0}\} $ and $ \sum_{i \in I_{\alpha_{0}}}d^{+}_{F'_{i}}(x) \leq c'_{\alpha_{0}}$. If $\sum_{i \in I_{\alpha_{0}}}d^{+}_{F'_{i}}(x ) \geq c_{\alpha_{0}}$, then we are done. Otherwise, $\sum_{i \in I_{\alpha_{0}}}d^{+}_{F'_{i}}(x)$ $\leq$ $c_{\alpha_{0}}-1 < c'_{\alpha_{0}}$. Due to the existence of $F'_{1}, \ldots, F'_{k}$, by (\ref{19}), for $I \supseteq I_{\alpha_{0}} $ and $\mathcal{F} \in D(V)$,
\begin{equation}\label{25}
H(I,\mathcal{F}) \leq \sum_{I_{\alpha}\subseteq I, \alpha \neq \alpha_{0} }(c'_{\alpha} - \sum_{i \in I_{\alpha}}d_{F_{i}}^{+}(x))+ c'_{\alpha_{0}}-1-\sum_{i \in I_{\alpha_{0}}}d_{F_{i}}^{+}(x)< \sum_{I_{\alpha}\subseteq I }(c'_{\alpha} - \sum_{i \in I_{\alpha}}d_{F_{i}}^{+}(x)).
\end{equation}
By Theorem~\ref{1}, $F_{1}, \ldots, F_{k}$ can be completed to $F''_{1}, \ldots, F''_{k}$
such that $\sum_{i \in I_{\alpha}} d^{+}_{F''_{i}}(x) \geq c_{\alpha}$ for $\alpha \in [l]$. Since $\sum_{i \in I_{\alpha_{0}}} d^{+}_{F''_{i}}(x) \geq c_{\alpha_{0}}> \sum_{i \in I_{\alpha_{0}}}d^{+}_{F_{i}}(x) $, there exists $e_{0} \in E^{+}_{F''_{i_{0}} \setminus F_{i_{0}}}(x)$ for some $i_{0} \in I_{\alpha_{0}}$. Update $F_{i_{0}}:=F_{i_{0}}+e_{0}$. By the necessity of Theorem~\ref{1}, (\ref{4})
holds; due to Observation~\ref{21} and (\ref{25}), (\ref{22}) still holds. Thus we can continue to add edges to $F_{i}$ for $i \in I_{\alpha_{0}}$ such that $\sum_{i \in I_{\alpha_{0}}}d^{+}_{F_{i}}(x)$ increases until $\tau$ is decreased. This finishes the induction step.

\section{Remarks}

The following framework on bipartite graphs and supermodular functions is
due to Lov\'asz \cite{L-30}, 
it was extended by Frank and Tardos \cite{frank-T-22}, and
very recently B\'{e}rczi and Frank \cite{berczi1,berczi2,berczi3}  have made quite some further developments on it.
In this section, we  integrate our work of arborescence augmentation to this well-studied framework, and present some more generalized forms of our work.

Let $G =(S,T;E)$ be a bipartite graph with bipartition $S \cup T$ and edge set $E$. For $X \subseteq T$, let
\[
\Gamma_{G}(X) = \{s \in S: \mbox{ there~is~an~edge } st \in E ~\mbox{with~some } t \in X \}.
\]
We say that $G$ covers a set function $p_{T}$ on $T$ if $|\Gamma_{G} (X)| \geq p_{T} (X)$  for $ \emptyset \neq X \subseteq T$. Denote by $E^{*}$ the edge set of the complete bipartite graph with bipartition $S \cup T$.

Let $D=(V+x, A)$ be a digraph, $F_{1}, \ldots, F_{k}$ be arc disjoint $x$-arborescences in $D$. Let $G_{0}=([k], V; E_{0})$ be a bipartite graph and $iv \in E_{0}$ if $v \in V(F_{i})$ for $i \in [k]$ and $v \in V$.

\begin{lem} \label{53}
	$F_{1}, \ldots, F_{k}$ can be completed to arc disjoint spanning $x$-arborescences $F^{*}_{1},$ $ \ldots, $ $ F^{*}_{k}$ 
	if and only if there exists $E \subseteq E^{*}$ such that
	\begin{itemize}
		\item [(i)] the bipartite graph $G^{+}=([k], V; E_{0} \cup E) $ is simple and covers $ k-d_{{A \setminus (\cup_{i=1}^{k}F_{i} \cup E^{+}(x))}}^{-}$;
		\item [(ii)] $d_{E}(v) \leq w_{[k]}(v) $ for $v \in V$ ($d_{E}(v)$ 
		is the number of edges in $E$ incident with $v$).
	\end{itemize}
\end{lem}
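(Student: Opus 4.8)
The idea is to set up a dictionary between the set $E$ of extra bipartite edges and the collection of arcs leaving $x$ that get used by the completed arborescences but are not already in the $F_i$'s, and then to delegate everything that does not touch $x$ to Theorem~\ref{10}. Throughout, the key identity is that, once $E$ is related to the "new arcs out of $x$", the complement $[k]\setminus\Gamma_{G^+}(X)$ counts exactly the indices $i$ for which the $i$-th arborescence must still receive an arc entering $X$ from inside $V$.

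For necessity, given spanning $x$-arborescences $F^*_i\supseteq F_i$, I would set $E:=\{\,iv:v\in N^+_{F^*_i}(x)\setminus V(F_i)\,\}$. The first observation to record is that, since every vertex of $V$ has in-degree exactly $1$ in the arborescence $F^*_i$, a new arc of $F^*_i$ out of $x$ can only enter a vertex outside $V(F_i)$; this makes $G^+$ simple and identifies $V(F_i)\cup\{v:iv\in E\}$ with $V(F_i)\cup N^+_{F^*_i}(x)$. Condition (ii) then follows because the indices $i$ with $iv\in E$ lie in $\{i:v\notin V(F_i)\}$ and each contributes a distinct arc $xv\in A\setminus\bigcup_j F_j$ (arc-disjointness of the $F^*_i$ together with $F_j\subseteq F^*_j$), so $d_E(v)$ is at most both of the quantities inside the minimum defining $w_{[k]}(v)$. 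For condition (i): given $\emptyset\ne X\subseteq V$ and an index $i$ with $X\cap(V(F_i)\cup N^+_{F^*_i}(x))=\emptyset$, following the $x$-to-$v$ path in $F^*_i$ for some $v\in X$ yields an arc $e$ entering $X$ that can lie neither in $F_i$ (its head would be in $V(F_i)\cap X$) nor in $E^+(x)$ (its head would be in $N^+_{F^*_i}(x)\cap X$); these arcs, one per such index $i$, are distinct and lie in $A\setminus(\bigcup_j F_j\cup E^+(x))$, which gives $d^-_{A\setminus(\bigcup_i F_i\cup E^+(x))}(X)\ge k-|\Gamma_{G^+}(X)|$, i.e.\ the covering inequality.

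For sufficiency I would start from $E$ satisfying (i)--(ii), use simplicity of $G^+$ to get $iv\in E\Rightarrow v\notin V(F_i)$, and then — this is the only step needing care — choose, simultaneously over all $v\in V$, a private arc $e_{iv}$ from $x$ to $v$ in $A\setminus\bigcup_i F_i$ for each edge $iv\in E$; the two terms in the minimum defining $w_{[k]}(v)$ are exactly what guarantees this can be done ($d_E(v)$ edges incident with $v$, at least $d_E(v)$ available arcs $xv$, and distinct vertices use disjoint arc-sets, so there is no global conflict). Setting $F_i':=F_i+\{e_{iv}:iv\in E\}$ keeps the $F_i'$ arc-disjoint $x$-arborescences with $V(F_i')=V(F_i)\cup\{v:iv\in E\}$, so $|\{i:X\cap V(F_i')=\emptyset\}|=k-|\Gamma_{G^+}(X)|$ for every $X$; since each $e_{iv}\in E^+(x)$ we also have $d^-_{A\setminus\bigcup_i F_i'}(X)\ge d^-_{A\setminus(\bigcup_i F_i\cup E^+(x))}(X)$, and combining with (i) shows that $F_1',\dots,F_k'$ satisfy the hypothesis (\ref{11}) of Theorem~\ref{10}. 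Theorem~\ref{10} then completes them to arc-disjoint spanning $x$-arborescences $F^*_1,\dots,F^*_k$, and $F_i\subseteq F_i'\subseteq F^*_i$ finishes the argument.

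\textbf{Main obstacle.} I expect the only genuinely delicate point to be the simultaneous choice of the arcs $e_{iv}$ in the sufficiency direction — essentially a trivial bipartite-matching / Hall-type argument, but it is the unique place where both coordinates of the minimum in the definition of $w_{[k]}$ are really used. Everything else is a routine reformulation of the covering condition, requiring no extremal argument beyond Theorem~\ref{10} itself.
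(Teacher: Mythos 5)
Your proof is correct, and the construction is the same as the paper's: define $E$ from the new root-arcs $N^+_{F^*_i}(x)\setminus V(F_i)$ for necessity, and read off the intended new root-arcs from $E$ for sufficiency. The only difference is organizational. The paper proves necessity by noting that $F^*_i-F_i-x$ is a branching in $D-\bigcup F_i-x$ rooted at $V(F_i)\cup N^+_{F^*_i}(x)$ and invoking the necessity direction of Theorem~\ref{16}; you instead reprove that small fact directly by tracing a path in each $F^*_i$, which is fine and essentially unfolds the same argument. For sufficiency, the paper applies Theorem~\ref{16} to obtain branchings $B_i$ in $D-\bigcup F_i-x$ rooted at $N_{G^+}(i)$ and then attaches the root-arcs; you first attach the root-arcs to form $F'_i$ and then invoke Theorem~\ref{10} (which the paper has remarked is equivalent to Theorem~\ref{16}). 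These are the two sides of the same equivalence, so the content is identical — your packaging stays entirely inside the ``arborescence completion'' vocabulary, which is slightly tidier for this paper's narrative.

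One small imprecision worth flagging: in the ``private arc $e_{iv}$'' step you write that both coordinates of the minimum defining $w_{[k]}(v)$ are ``really used.'' In fact, once you impose simplicity of $G^+$, the inclusion $\{i: iv\in E\}\subseteq\{i: v\notin V(F_i)\}$ is automatic, so $d_E(v)\le |\{i:v\notin V(F_i)\}|$ costs nothing; the binding constraint that makes the per-vertex arc assignment feasible is only $d_E(v)\le [x,v]_{A\setminus\bigcup_i F_i}$. This is a cosmetic point and does not affect correctness.
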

\begin{proof}
	For the necessity, let $E \subseteq E^{*}$ such that $iv \in E$ if and only if $v \in N_{F^{*}_{i}}^{+}(x)\setminus V(F_{i})$ for $i \in [k]$ and $v \in V$. Clearly, $d_{E}(v) \leq w_{[k]}(v)$ for $v \in V$, $E \cap E_{0} = \emptyset$ and $iv \in E_{0} \cup E$ if and only if $v \in V(F_{i}) \cup N^{+}_{F^{*}_{i}}(x)$. Thus $\Gamma_{G^{+}}(X)=\{i \in [k]: X \cap (V(F_{i}) \cup N^{+}_{F^{*}_{i}}(x)) \neq \emptyset \}$ for $\emptyset \neq X \subseteq V$. Note that $d_{{A \setminus (\cup_{i=1}^{k}F_{i} \cup E^{+}(x))}}^{-}$ is the in-degree function of $ D-\cup_{i=1}^{k}F_{i}-x$. If we see $F^{*}_{i}-F_{i}-x$ as a branching with root set $V(F_{i}) \cup N^{+}_{F^{*}_{i}}(x)$ in $D-\cup_{i=1}^{k}F_{i}-x$ for $1 \leq i \leq k $, by Theorem \ref{16}, for $\emptyset \neq X \subseteq V$, $ d_{{A \setminus (\cup_{i=1}^{k}F_{i} \cup E^{+}(x))}}^{-}(X) \geq |\{i \in [k]: X \cap (V(F_{i}) \cup N^{+}_{F^{*}_{i}}(x)) =\emptyset \} |$, this gives    $|\Gamma_{G^{+}}(X)| \geq k-d_{{A \setminus (\cup_{i=1}^{k}F_{i} \cup E^{+}(x))}}^{-}(X)$.
	
	Next, we prove the sufficiency. By $(i)$, $|\Gamma_{G^{+}}(X)| \geq k-d_{{A \setminus (\cup_{i=1}^{k}F_{i} \cup E^{+}(x))}}^{-}(X)$ for $\emptyset \neq X \subseteq V$. Then $|\Gamma_{G^{+}}(V)| \geq  k-d_{{A \setminus (\cup_{i=1}^{k}F_{i} \cup E^{+}(x))}}^{-}(V) = k $, so $N_{G^{+}}(i) \neq \emptyset$ for $i \in [k]$; $d_{{A \setminus (\cup_{i=1}^{k}F_{i} \cup E^{+}(x))}}^{-}(X) \geq |[k] \setminus \Gamma_{G^{+}}(X)|   =|\{i \in [k]: X \cap N_{G^{+}}(i) = \emptyset \}|$ for $\emptyset \neq X \subseteq V$.
	By Theorem \ref{16}, there exist arc disjoint branchings $B_{i}$ with root set $ N_{G^{+}}(i) \supseteq V(F_{i})-x$ in $D - \cup_{i=1}^{k}F_{i}-x$ for $1 \leq i \leq k$. Since $G^{+}$ is simple, $ N_{G^{+}}(v) = \{i \in [k]: v \in V(F_{i}) \} \dot{\cup} \{i \in [k]: iv \in E \}$ for $v \in V$. So $d_{E}(v)=|\{i:v \in N_{G^{+}}(i) \setminus V(F_{i}) \}|$.
	$(ii)$ implies for $v \in V$, $|\{i:v \in N_{G^{+}}(i) \setminus V(F_{i}) \}|=d_{E}(v) \leq w_{[k]}(v) \leq [x, v]_{A \setminus \cup_{i=1}^{k}F_{i}}$. Thus there exist disjoint arc sets $E_{i} \subseteq E^{+}_{A \setminus \cup_{i=1}^{k}F_{i}}(x)$ such that $E_{i}$ consists of arcs from $x$ to each vertex in $N_{G^{+}}(i) \setminus V(F_{i})$ for $1 \leq i \leq k$. $B_{i} \cup F_{i} \cup E_{i}$ is the spanning $x$-arborescence as demanded for $1 \leq i \leq k$.
\end{proof}

Next we show that Theorem~\ref{10} has an equivalent form, which can be obtained by replacing  $ d_{{A \setminus \cup_{i=1}^{k}F_{i}}}^{- }(X) $ with $ d_{{A \setminus (\cup_{i=1}^{k}F_{i} \cup E^{+}(x))}}^{-}(X) + \widetilde{w}_{[k]}(X)$ in (\ref{11}).

\noindent \textbf{Theorem \ref{10}'} Let $D=(V+x, A)$ be a digraph, $F_{1}, \ldots, F_{k}$ be $k$ arc-disjoint $x$-arborescences. They can be completed to $k$ arc-disjoint spanning $x$-arborescences
if and only if for any $\emptyset \neq X \subseteq V$,
\begin{equation}\label{54-R}
d_{{A \setminus (\cup_{i=1}^{k}F_{i} \cup E^{+}(x))}}^{-}(X) + \widetilde{w}_{[k]}(X)\geq |P(X)|.
\end{equation}
\begin{proof}
	For sufficiency, since $d_{{A \setminus \cup_{i=1}^{k}F_{i}}}^{-}(X)=d_{{A \setminus (\cup_{i=1}^{k}F_{i} \cup E^{+}(x))}}^{-}(X)+[x,X]_{A \setminus \cup_{i=1}^{k}F_{i}} \geq d_{{A \setminus (\cup_{i=1}^{k}F_{i} \cup E^{+}(x))}}^{-}(X) + \widetilde{w}_{[k]}(X)$ for $ \emptyset \neq X \subseteq V $, (\ref{54-R}) implies (\ref{11}). By Theorem~\ref{10}, $F_{1}, \ldots, F_{k}$ can be completed to  $k$ arc-disjoint spanning $x$-arborescences. 
	
	Next, we prove the necessity. Note that for $i \in [k]$ and $v \in V$, $[x,v]_{F^{*}_{i} \setminus F_{i}} \leq 1$ and the equality holds only if $v \notin V(F_{i})$. So for $v \in V$, $[x,v]_{\cup_{i=1}^{k}F_{i}^{*} \setminus F_{i}} \leq \min \{|\{i \in [k]: v \notin V(F_{i}) \}|, [x,v]_{A \setminus \cup_{i=1}^{k} F_{i}} \}= \widetilde{w}_{[k]}(v)$.
	
	Let $ \emptyset \neq X \subseteq V$.
	Since $\cup_{i=1}^{k}F^{*}_{i} \setminus F_{i}= E^{+}_{\cup_{i=1}^{k}F_{i}^{*} \setminus F_{i}}(x) \dot{\cup} ((\cup_{i=1}^{k}F^{*}_{i} \setminus F_{i}) \setminus E^{+}(x))$  	
	and $((\cup_{i=1}^{k}F^{*}_{i} \setminus F_{i}) \setminus E^{+}(x)) \subseteq A \setminus (\cup_{i=1}^{k}F_{i} \cup E^{+}(x ) ) $,
	\begin{align*}
	d_{\cup_{i=1}^{k}F^{*}_{i} \setminus F_{i}}^{-}(X)-d_{{A \setminus (\cup_{i=1}^{k}F_{i} \cup E^{+}(x))}}^{-}(X)
	& \leq d_{ E^{+}_{\cup_{i=1}^{k}F_{i}^{*} \setminus F_{i}}(x) }^{-}(X)   	
	= [x,X]_{\cup_{i=1}^{k}F^{*}_{i} \setminus F_{i}} \\ 	
	& = \sum_{v \in X} [x,v]_{\cup_{i=1}^{k}F_{i}^{*} \setminus F_{i}} \leq \sum_{v \in X}w_{[k]}(v) = \widetilde{w}_{[k]}(X). 	
	\end{align*}
	
	Note that for $i \in [k]$, $X \cap V(F_{i}) =\emptyset$ implies $d_{F_{i}^{*} \setminus F_{i}}^{-}(X) \geq 1$. Hence, $d^{-}_{\cup_{i=1}^{k}F_{i}^{*} \setminus F_{i}}(X) \geq |P(X)|$, 
	and then
	$d_{{A \setminus (\cup_{i=1}^{k}F_{i} \cup E^{+}(x))}}^{-}(X) + \widetilde{w}_{[k]}(X) \geq d_{\cup_{i=1}^{k}F_{i}^{*} \setminus F_{i}}^{-}(X)\geq |P(X)|$.
\end{proof}

Since $|\Gamma_{G_{0}}(X)| = k- |P(X)| $,
(\ref{54-R}) is equivalent to
\begin{equation}\label{43}
|\Gamma_{G_{0}}(X)|+\widetilde{w}_{[k]}(X) \geq k-d_{{A \setminus (\cup_{i=1}^{k}F_{i} \cup E^{+}(x))}}^{-}(X).
\end{equation}
It follows from Lemma~\ref{53} and Theorem~\ref{10}' that there exists $E \subseteq E^{*}$ such that
\begin{itemize}
	\item [(i)]
	the graph $G^{+}=([k], V; E_{0} \cup E) $ is simple and covers $ k-d_{{A \setminus (\cup_{i=1}^{k}F_{i} \cup E^{+}(x))}}^{-}$,
	\item [(ii)] $d_{E}(v) \leq w_{[k]}(v) $ for $v \in V$;
\end{itemize}
if and only if for $\emptyset \neq X \subseteq V$, (\ref{43}) holds.


The next theorem is a more generalized version of the above relations: since $k-d_{{A \setminus (\cup_{i=1}^{k}F_{i} \cup E^{+}(x))}}^{-}$ is a special $p_T$,
and  $w_{[k]}$ is a special case of function $g$ ($g$ is defined next in the theorem).

\begin{thm}
	Let $G_{0}=(S, T;E_{0})$ be a simple bipartite graph and $p_{T}$ be a positively intersecting supermodular function on $2^T$ such that $ p_{T} \leq |S|$. Let $g: T \rightarrow \mathbb{N}$. Then there exists $E \subseteq E^{*}$ such that
	\begin{itemize}
		\item [(i)] 
		the graph $G^{+}=(S,T; E \cup E_{0})$ is simple and covers $p_{T}$, 
		\item [(ii)] 
		$d_{E}(t) \leq g(t)$ for $t \in T$;
	\end{itemize}
	if and only if for $\emptyset \neq T_{0}\subseteq T$,
	\begin{equation}\label{44}
	|\Gamma_{G_{0}}(T_{0})|+\widetilde{g}(T_{0}) \geq p_{T}(T_{0}).
	\end{equation}
\end{thm}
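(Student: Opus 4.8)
The plan is to prove the nontrivial (sufficiency) direction by induction on $\widetilde{g}(T)=\sum_{t\in T}g(t)$, adding the new edges one at a time; necessity is immediate. For necessity, given $E$ with (i) and (ii) and $\emptyset\neq T_{0}\subseteq T$, one has $\Gamma_{G^{+}}(T_{0})=\Gamma_{G_{0}}(T_{0})\cup\{s\in S:\,st\in E\text{ for some }t\in T_{0}\}$, and the extra part has size at most $\sum_{t\in T_{0}}d_{E}(t)\le\widetilde{g}(T_{0})$, so $p_{T}(T_{0})\le|\Gamma_{G^{+}}(T_{0})|\le|\Gamma_{G_{0}}(T_{0})|+\widetilde{g}(T_{0})$, which is (44).

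For sufficiency: if $|\Gamma_{G_{0}}(T_{0})|\ge p_{T}(T_{0})$ for every $\emptyset\neq T_{0}\subseteq T$ (this includes the base case $\widetilde{g}(T)=0$, by (44)), then $G_{0}$ already covers $p_{T}$ and $E=\emptyset$ works. Otherwise fix $T_{1}$ with $p_{T}(T_{1})>|\Gamma_{G_{0}}(T_{1})|$; by (44) then $\widetilde{g}(T_{1})\ge 1$. Call $T_{0}$ with $\emptyset\neq T_{0}\subseteq T$ \emph{tight} if $|\Gamma_{G_{0}}(T_{0})|+\widetilde{g}(T_{0})=p_{T}(T_{0})$. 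The key reduction is to produce an edge $s_{0}t_{0}\notin E_{0}$ with $g(t_{0})\ge 1$ such that no tight set $T_{0}\ni t_{0}$ has $s_{0}\in\Gamma_{G_{0}}(T_{0})$. Granting this, replace $G_{0}$ by $G_{0}+s_{0}t_{0}$ (still simple, since $s_{0}t_{0}\notin E_{0}$) and decrease $g(t_{0})$ by $1$. Then $\widetilde{g}(T)$ drops by $1$, and (44) is preserved: for $T_{0}\not\ni t_{0}$ nothing changes; for $T_{0}\ni t_{0}$ the term $\widetilde{g}(T_{0})$ drops by $1$ while $|\Gamma_{G_{0}}(T_{0})|$ rises by $1$ if $s_{0}\notin\Gamma_{G_{0}}(T_{0})$ and stays put otherwise, and in that second case the choice of $s_{0}t_{0}$ guarantees $T_{0}$ was not tight, hence had slack $\ge 1$. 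Applying the induction hypothesis to the updated data gives an edge set $E'$; then $E:=E'\cup\{s_{0}t_{0}\}$ works for the original instance, since $G_{0}+s_{0}t_{0}+E'$ is exactly the $G^{+}$ produced for the new instance, and adjoining $s_{0}t_{0}$ restores the bound $d_{E}(t_{0})\le g(t_{0})$ while leaving all other $T$-degrees unchanged.

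To produce $s_{0}t_{0}$ I would first record an uncrossing fact: if $X,Y$ are tight, $X\cap Y\neq\emptyset$, and $p_{T}(X),p_{T}(Y)>0$, then $X\cup Y$ and $X\cap Y$ are tight — combine the positive intersecting supermodularity of $p_{T}$, the submodularity of $X\mapsto|\Gamma_{G_{0}}(X)|$ (using $\Gamma_{G_{0}}(X\cup Y)=\Gamma_{G_{0}}(X)\cup\Gamma_{G_{0}}(Y)$ and $\Gamma_{G_{0}}(X\cap Y)\subseteq\Gamma_{G_{0}}(X)\cap\Gamma_{G_{0}}(Y)$), the modularity of $\widetilde{g}$, and (44). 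Next, since $\widetilde{g}(T_{1})\ge 1$ there is $t\in T_{1}$ with $g(t)\ge 1$, and every such $t$ has a non-neighbour in $S$ (otherwise $\Gamma_{G_{0}}(T_{1})=S$, and $p_{T}\le|S|$ would contradict $p_{T}(T_{1})>|\Gamma_{G_{0}}(T_{1})|$); fix one such $t_{0}$. Every tight set containing $t_{0}$ has $\widetilde{g}\ge g(t_{0})\ge 1$, hence $p_{T}>0$, so by the uncrossing fact these tight sets are closed under union and thus have a unique maximal element $T_{0}^{*}$ when at least one exists. If there is no tight set containing $t_{0}$, let $s_{0}$ be any non-neighbour of $t_{0}$ in $G_{0}$ and the required property holds vacuously. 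Otherwise, from tightness of $T_{0}^{*}$ and $p_{T}\le|S|$ we get $|\Gamma_{G_{0}}(T_{0}^{*})|=p_{T}(T_{0}^{*})-\widetilde{g}(T_{0}^{*})\le|S|-1$, so we may pick $s_{0}\in S\setminus\Gamma_{G_{0}}(T_{0}^{*})$; then $s_{0}t_{0}\notin E_{0}$ because $t_{0}\in T_{0}^{*}$, and every tight $T_{0}\ni t_{0}$ lies inside $T_{0}^{*}$ by maximality, so $s_{0}\notin\Gamma_{G_{0}}(T_{0})$.

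The hard part is this last paragraph: making the uncrossing legitimate, which requires checking that the relevant tight sets carry $p_{T}>0$ so that the \emph{positive} intersecting supermodularity of $p_{T}$ is applicable, and then observing that the hypothesis $p_{T}\le|S|$ is exactly what guarantees both that the chosen $t_{0}$ has a free neighbour in $S$ and that $S\setminus\Gamma_{G_{0}}(T_{0}^{*})\neq\emptyset$. The remaining ingredients — the one-edge bookkeeping for (44) and the induction itself — are routine.
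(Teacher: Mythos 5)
Your proof is correct and follows essentially the same route as the paper: induct on $\widetilde{g}(T)$, use positive intersecting supermodularity of $p_T$ together with submodularity of $|\Gamma_{G_0}|$ and modularity of $\widetilde{g}$ to uncross tight sets, take a maximal tight set containing $t_0$, and pick $s_0$ outside its neighbourhood; the hypothesis $p_T \leq |S|$ is used in precisely the same two places. The only cosmetic difference is that when no tight set contains $t_0$ you still insert a dummy edge (justified by choosing $t_0$ inside a set $T_1$ violating the cover condition, which yields a free partner $s_0$), whereas the paper simply decrements $g(t_0)$ without adding an edge in that situation; both resolutions are valid.
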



\begin{proof}
	The necessity is obvious.
	We just prove the sufficiency. If $g \equiv 0$, then let $E := \emptyset$, we are done. 
	Suppose $ g(t_{0}) >0$ for some $t_{0} \in T$. If for any $T_{0} \subseteq T$, $|\Gamma_{G_{0}}(T_{0})|+\widetilde{g}(T_{0}) > p_{T}(T_{0})$, update  $g(t_{0}):=g(t_{0})-1$ and (\ref{44}) still holds. Otherwise, choose a maximal $T_{1} \subseteq T$ such that $t_{0} \in T_{1}$ and $|\Gamma_{G_{0}}(T_{1})|+\widetilde{g}(T_{1}) = p_{T}(T_{1})$.  We  show next that if
	$t_{0} \in T_{0} \subseteq T$
	and $|\Gamma_{G_{0}}(T_{0})|+\widetilde{g}(T_{0}) = p_{T}(T_{0})$, then $T_{0} \subseteq T_{1}$.  Suppose to the contrary that there exists $T_{2} \subseteq T$, $t_{0} \in T_{2}$, $T_{2} \setminus T_{1} \neq \emptyset$ and $|\Gamma_{G_{0}}(T_{2})|+\widetilde{g}(T_{2}) = p_{T}(T_{2}) $. Since $g(t_{0})>0$, $t_{0} \in T_{i}$ and $|\Gamma_{G_{0}}(T_{i})|+\widetilde{g}(T_{i}) = p_{T}(T_{i})$ for $i=1,2$, $p_{T}(T_{1}), p_{T}(T_{2}) >0$. Note that $|\Gamma_{G_{0}}|$, and $\widetilde{g}$ are intersecting submodular and $p_{T}$ is positively intersecting supermodular on $2^{T}$,
	we have
	\[
	\begin{split}
	& 0=|\Gamma_{G_{0}}(T_{1})|+\widetilde{g}(T_{1})-p_{T}(T_{1})
	+|\Gamma_{G_{0}}(T_{2})|+\widetilde{g}(T_{2})-p_{T}(T_{2}) \geq \\
	& |\Gamma_{G_{0}}(T_{1} \cup T_{2})|+\widetilde{g}(T_{1} \cup T_{2})-p_{T}(T_{1} \cup T_{2})+
	|\Gamma_{G_{0}}(T_{1} \cap T_{2})|+\widetilde{g}(T_{1}\cap T_{2})-p_{T}(T_{1} \cap T_{2}) \\
	&\geq 0;
	\end{split}
	\]
	by (\ref{44})$, |\Gamma_{G_{0}}(T_{1} \cup T_{2})|+\widetilde{g}(T_{1} \cup T_{2}) = p_{T}(T_{1} \cup T_{2})$, contradicting the maximality of $T_{1}$. Since $|\Gamma_{G_{0}}(T_{1})|= p_{T}(T_{1})-\widetilde{g}(T_{1}) < |S| $, there exists $s_{0} \in S \setminus \Gamma_{G_{0}}(T_{1})$. Update $g(t_{0}):=g(t_{0})-1$ and $E := E+s_{0}t_{0}$.
	Then (\ref{44}) still holds. Continue the above process  until $g \equiv 0$ and $E$ is the set of edges added to $E_{0}$.
\end{proof}

Next we 
present a more generalized form of our main theorem (Theorem~\ref{32}). 

Assume $F_{1}, \ldots, F_{k}$ can be completed to arc disjoint spanning $F^{*}_{1}, \ldots, F^{*}_{k}$. 
Define $E \subseteq E^{*}$ as $iv \in E$ if and only if $v \in N_{F^{*}_{i}}^{+}(x)\setminus V(F_{i})$ for $i \in [k]$ and $v \in V$. Note that $ d_{E}(i)=d_{F^{*}_{i}}^{+}(x)- d_{F_{i}}^{+}(x)$ for $i \in [k]$. Let $I_{1}, \ldots, I_{l}$ be a partition of $[k]$. Let $d_{1},\ldots, d_{l}$ and $d'_{1}, \ldots, d'_{l}$ be nonnegative integers such that $d_{\alpha} \leq d'_{\alpha}$ for $1 \leq \alpha \leq l$.

In Theorem~\ref{32}, let $c_{\alpha}=d_{\alpha}+ \sum_{i \in I_{\alpha}}d_{F_{i}}^{+}(x)$ and $c'_{\alpha}=d'_{\alpha}+ \sum_{i \in I_{\alpha}}d_{F_{i}}^{+}(x)$.  
Then Theorem~\ref{32} gives a characterization for the existence of  $\{F_{i}^{*} \}_{i=1}^{k}$ such that $ d_{\alpha} \leq \sum_{i \in I_{\alpha}} ( d_{F^{*}_{i}}^{+}(x)- d_{F_{i}}^{+}(x) ) \leq d'_{\alpha}$ for $\alpha \in [l]$, that is $d_{\alpha} \leq \sum_{i \in I_{\alpha}} d_{E}(i) \leq d'_{\alpha} $.
%
%
%
Apply Theorem~\ref{32} with the above $c_{\alpha}$ and $c'_{\alpha}$, then the related formulas are updated as following:
\begin{itemize}
	\item  replace $ d_{{A \setminus \cup_{i=1}^{k}F_{i}}}^{- }(X) $  by $ d_{{A \setminus (\cup_{i=1}^{k}F_{i} \cup E^{+}(x))}}^{-}(X) + \widetilde{w}_{[k]}(X)$  in (\ref{4})
	(checking similarly to Theorem~\ref{10}'); 
	
	\item  replace $c_{\alpha}-\sum_{i \in I_{\alpha}}d_{F_{i}}^{+}(x)$  by $d_{\alpha}$ in (\ref{4}) and (\ref{27});
	
	\item replace $c'_{\alpha}-\sum_{i \in I_{\alpha}}d_{F_{i}}^{+}(x)$ by $d'_{\alpha}$ in (\ref{22}).
\end{itemize}
For $\emptyset \neq X \subseteq V$, we have $P_{I}(X) \dot{\cup } (\overline{I} \cup \Gamma_{G_{0}}(X))=[k]$, $|P_{I}(X)|=k-|\overline{I} \cup \Gamma_{G_{0}}(X)|$.
Then we obtain the following inequalities from (\ref{4}), (\ref{27}) and (\ref{22}) respectively:
\begin{equation} \label{36}
\sum^{t}_{j=1} (|\overline{I} \cup \Gamma_{G_{0}}(X_{j})| +\widetilde{w}_{[k]}(X_{j}) )\geq  \sum^{t}_{j=1}(k-d^{-}_{{A \setminus (\cup_{i=1}^{k}F_{i} \cup E^{+}(x))}} (X_{j})) +
\sum_{ I_{\alpha} \subseteq  \overline{I}}d_{\alpha}- \widetilde{w}_{\overline{I}}(V \setminus \cup_{j=1}^{t}X_{j}),
\end{equation}
\begin{equation}\label{37}
\widetilde{w}_{\overline{I}}(V) \geq \sum_{I_{\alpha} \subseteq \overline{I}} d_{\alpha},
\end{equation}
\begin{equation}\label{38}
\sum_{j=1}^{t}(k - d_{{A \setminus (\cup_{i=1}^{k}F_{i} \cup E^{+}(x))} }^{-}(X_{j}) - |\overline{I} \cup \Gamma_{G_{0}}(X_{j})|) \leq \sum_{I_{\alpha} \subseteq I} d'_{\alpha}.
\end{equation}

Then Theorem~\ref{32} gives the following:  there exists $E \subseteq E^{*}$ such that
	$(i)$ the bipartite graph $G^{+}=([k], V; E_{0} \cup E) $ is simple and covers $ k-d_{{A \setminus (\cup_{i=1}^{k}F_{i} \cup E^{+}(x))}}^{-}$,
	$(ii)$ $d_{\alpha} \leq \sum_{i \in I_{\alpha}} d_{E}(i) \leq d'_{\alpha}$ for $\alpha \in [l]$,
	$(iii)$ $d_{E}(v) \leq w_{[k]}(v)$ for $v \in V$; 	
if and only if for any disjoint $X_{1}, \ldots, X_{t} \subseteq V$ and $I \subseteq [k]$ that is the union of some of $I_{1}, \ldots, I_{l}$, 
		$(i)$ (\ref{36}) holds; in particular, when $t=0$, (\ref{36}) implies (\ref{37}) holds;
		$(ii)$  (\ref{38}) holds.

The next theorem will generalize the above relations: since $k-d_{{A \setminus (\cup_{i=1}^{k}F_{i} \cup E^{+}(x))}}^{-}$ is a special $p_T$, and  $w_{I}$ is a special 
$g_{S_{0}}$ (where $g_{S_{0}}$ is defined next in the theorem; for the similarities, 
note that by definition, $w_{I}(u) = \min\{ |I \setminus \Gamma_{G_{0}}(u)|, w_{[k]}(u) \}$ for $u \in V$ and $I \subseteq [k]$).

\begin{thm}\label{41}
	Let $G_{0}=(S, T;E_{0})$ be a simple bipartite graph, $p_{T}$ be a positively intersecting supermodular function on $T$ such that $ p_{T} \leq |S|$. Let $ \{S_{1}, \ldots, S_{l} \}$ be a partition of $S$.
	Suppose $f_{1}, f_{2}: [l] \rightarrow \mathbb{N}$ satisfy that $f_{1} \leq f_{2}$ and $g: T \rightarrow \mathbb{N}$. Then there exists $E \subseteq E^{*}$ such that 
	\begin{itemize}
		\item[(i)] the graph $G^{+}=(S,T; E \cup E_{0})$ is simple and covers $p_{T}$,
		\item[(ii)] $f_{1}(\alpha) \leq \sum_{s \in S_{\alpha}}d_{E}(s) \leq f_{2}(\alpha)$ for $\alpha \in [l]$,
		\item[(iii)] $d_{E}(t) \leq g(t)$ for $t \in T$;
	\end{itemize}
	if and only if for any disjoint $T_{1}, \ldots, T_{n} \subseteq T$ and any $S_{0} \subseteq S$ that is the union of some of $S_{1}, \ldots, S_{l}$,  	 
	\begin{itemize}
		\item[(i)] \begin{equation}\label{40}
		\sum^{n}_{j=1} (|\overline{S_{0}} \cup \Gamma_{G_{0}}(T_{j})| +\widetilde{g}(T_{j}) )\geq  \sum^{n}_{j=1}p_{T}(T_{j}) +
		\sum_{ S_{\alpha} \subseteq  \overline{S_{0}}}f_{1}(\alpha)- \widetilde{g}_{\overline{S_{0}}}(T \setminus \cup_{j=1}^{n}T_{j}),
		\end{equation}
		where $\overline{S_{0}}=S \setminus S_{0}$ and $g_{S_{0}}(t):= \min\{|S_{0} \setminus \Gamma_{G_{0}}(t)|, g(t) \}$ for $t \in T$.
		
		In particular, when $n=0$, (\ref{40}) implies
		\begin{equation}\label{88}
		\widetilde{g}_{\overline{S_{0}}}(T) \geq \sum_{S_{\alpha} \subseteq \overline{S_{0}}}f_{1}(\alpha).
		\end{equation} 	
		\item[(ii)] \begin{equation}\label{42}
		\sum_{j=1}^{n}(p_{T}(T_{j}) - |\overline{S_{0}} \cup \Gamma_{G_{0}}(T_{j}) |) \leq \sum_{S_{\alpha} \subseteq S_{0}} f_{2}(\alpha)
		\end{equation} 	
	\end{itemize}
\end{thm}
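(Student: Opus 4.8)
The plan is to mirror the proof of Theorem~\ref{32}: first prove a ``lower‑bound only'' variant playing the role of Theorem~\ref{1}, imposing only $(i)$, $(iii)$ and $f_{1}(\alpha)\le\sum_{s\in S_{\alpha}}d_{E}(s)$ and characterised by~(\ref{40}); then a ``upper‑bound only'' variant playing the role of Theorem~\ref{31}, imposing $(i)$, $(iii)$ and $\sum_{s\in S_{\alpha}}d_{E}(s)\le f_{2}(\alpha)$ and characterised by~(\ref{40}) with trivial lower bounds together with~(\ref{42}); and finally glue the two exactly as in the proof of Theorem~\ref{32}. The base case of both inductions --- no partition constraints at all --- is precisely the covering theorem for bipartite graphs proved just before Theorem~\ref{41}, which here takes the part played by Theorems~\ref{16} and~\ref{10} in Sections~2 and~4. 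The translation dictionary is $\sum_{s\in S_{\alpha}}d_{E}(s)\leftrightarrow\sum_{i\in I_{\alpha}}\bigl(d^{+}_{F^{*}_{i}}(x)-d^{+}_{F_{i}}(x)\bigr)$, $f_{1}(\alpha)\leftrightarrow c_{\alpha}$, $f_{2}(\alpha)\leftrightarrow c'_{\alpha}$, $g\leftrightarrow w_{[k]}$, $g_{\overline{S_{0}}}\leftrightarrow w_{\overline{I}}$, $p_{T}(X)\leftrightarrow k-d^{-}_{A\setminus(\cup_{i=1}^{k}F_{i}\cup E^{+}(x))}(X)$, and $|\overline{S_{0}}\cup\Gamma_{G^{+}}(X)|\leftrightarrow|P_{I}(X)|$ computed in the current graph $G^{+}$; in the digraph special case this dictionary turns~(\ref{4}) and~(\ref{22}) into exactly~(\ref{40}) and~(\ref{42}) (the computation carried out in the remarks preceding Theorem~\ref{41}), and for a general $p_{T}$ the argument below is the natural abstraction of that computation. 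Necessity of the two conditions is then the word‑for‑word translation of the necessity parts of Theorems~\ref{1} and~\ref{31}: one expands $\sum_{s\in S_{\alpha}}d_{E}(s)$, uses that $G^{+}$ is simple to get $d_{E}(t)\le\min\{|S_{0}\setminus\Gamma_{G_{0}}(t)|,g(t)\}=g_{S_{0}}(t)$ whenever $t$ receives new edges only to $S_{0}$, and bounds $p_{T}$ from above by the ``covered'' set via the covering theorem.

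For the lower‑bound variant I would copy the proof of Theorem~\ref{1}. The defect function $X\mapsto p_{T}(X)-|\overline{S_{0}}\cup\Gamma_{G^{+}}(X)|$ is positively intersecting supermodular (a difference of a positively intersecting supermodular function and a submodular one, and a set on which it is positive automatically has $p_{T}>0$ there), so its negative is the intersecting submodular nonnegative function to which Lemma~\ref{8} is applied; this is exactly what forces the maximal tight sets to be pairwise disjoint. Induction is on the number $\tau$ of classes $S_{\alpha}$ with $\sum_{s\in S_{\alpha}}d_{E}(s)<f_{1}(\alpha)$, starting from $E=\emptyset$ with base case the covering theorem; the analogue of Claim~\ref{7} (one may add an edge $s_{0}t_{0}\notin E_{0}\cup E$ with $s_{0}\in S_{\alpha_{0}}$ while keeping the covering condition valid) and the analogue of Claim~\ref{9} (the bookkeeping for the functional $F(X_{1},\dots,X_{t};I)$) go through unchanged, as do the Case~1 / Case~2 split and the three subcases of Case~2.

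For the upper‑bound variant I would follow the proof of Theorem~\ref{31}. For $S_{0}$ a union of classes and a multiset $\mathcal{F}$ of subsets of $T$, set $H(S_{0},\mathcal{F})=\sum_{X\in\mathcal{F}}\bigl(p_{T}(X)-|\overline{S_{0}}\cup\Gamma_{G^{+}}(X)|\bigr)$ and define the tight families $\mathcal{E}^{1}_{S_{0}}$, $\mathcal{E}^{2}$ exactly as in Section~4, with $g$ (and the remaining $g$‑budget) in place of the root‑degree data. Since $p_{T}-|\overline{S_{0}}\cup\Gamma_{G^{+}}|$ is positively intersecting supermodular, the ``Process of PIEOs'' and Propositions~\ref{clm-i-n-PIE-P}--\ref{18-P} of Section~3 --- stated purely for $D(\Omega)$ and abstract intersecting functions --- apply without change, and the analogues of Claims~\ref{48}, \ref{20}, \ref{45}, \ref{33} and~\ref{34} follow verbatim. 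The induction is on the number $\lambda$ of classes with strict slack $\sum_{s\in S_{\alpha}}d_{E}(s)<f_{2}(\alpha)$; at each step one either lowers some $f_{2}(\alpha_{0})$ by $1$ or adds one edge incident to $S_{\alpha_{0}}$, using the tournament argument of Claim~\ref{23} and Observation~\ref{21}. Finally, the gluing into Theorem~\ref{41} is the transcription of the proof of Theorem~\ref{32}: induct on $\tau$, invoke the lower‑bound variant once with a relaxed $c_{\alpha_{0}}$ to produce an auxiliary completion, then the upper‑bound variant, then add one more edge.

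The step I expect to be the main obstacle is the bookkeeping when an edge $s_{0}t_{0}$ is inserted into $E$: the neighbourhood function passes from $\Gamma_{G^{+}}$ to $\Gamma_{G^{+}+s_{0}t_{0}}$, and one must check that every ``tight set'' and ``defect'' quantity shifts exactly as $d^{-}$, $|P_{I}|$ and $w_{\overline{I}}$ do in Sections~2 and~4 --- in particular that $g_{S_{0}}(t)$ and $g_{\overline{S_{0}}}(t)$ each drop by at most one when a new edge lands on $t$ (the replacement for $[x,u]_{F^{*}_{i}\setminus F_{i}}\le 1$), and that the split of $d_{E}$ across $S_{0}$ and $\overline{S_{0}}$ respects the two halves $g_{S_{0}}$, $g_{\overline{S_{0}}}$ of $g$ throughout. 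A secondary point requiring care but no new idea is that $p_{T}$ is only \emph{positively} intersecting supermodular: every uncrossing must therefore be performed only on properly intersecting pairs of strictly positive defect, which is exactly how the PIEOs of Section~3 are already arranged.
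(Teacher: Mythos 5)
Your proposal is correct and follows exactly the route the paper intends: the paper itself does not write out a proof of Theorem~\ref{41} but simply states that, given the translation to the bipartite/supermodular setting and the abstract nature of the Section~3 uncrossing machinery, ``the proof of Theorem~\ref{41} \dots is (exactly) the same as the proof of Theorem~\ref{32}.'' Your elaboration --- a lower-bound analogue of Theorem~\ref{1} and an upper-bound analogue of Theorem~\ref{31}, both anchored on the standalone covering theorem proved just before Theorem~\ref{41}, glued together as in Theorem~\ref{32} --- together with the correct translation dictionary, the observation that the per-vertex edge increments across $S_{0}$ are bounded by $g_{S_{0}}$, and the caveat that all uncrossings occur on positive-defect sets where the positivity hypothesis on $p_{T}$ is active, is precisely the content that the paper elects to skip.
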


Let $M_{S}=(S, r_{S})$ be a matroid on $S$ with rank function $r_{S}$. A bipartite graph $G=(S, T ; E)$ is said to $M_{S}$-cover  a set function $p_{T}$ on $T$ if for $\emptyset \neq T_{0} \subseteq T$,
$r_{S}(\Gamma_{G}(T_{0})) \geq p_{T}(T_{0})$.
In (\ref{40}) and (\ref{42}), if we replace $|\overline{S_{0}} \cup \Gamma_{G_{0}}(T_{j}) |$with $r_{S}(\overline{S_{0}} \cup \Gamma_{G_{0}}(T_{j}) )$,  then Theorem~\ref{41} will have a strengthened  matroidal version:

Let $S, T$ be the above disjoint sets and $I(G;S_{0}, T_{0}):=r_{S}(\overline{S_{0}} \cup \Gamma_{G}(T_{0}) )$, where
$G=(S, T ; E)$,
%
$S_{0} \subseteq S$ and $T_{0}\subseteq T$. Note that $I$ has the following properties:
\begin{itemize}
	\item[(i)] $I(G_{1}; S_{0}, T_{0}) \leq I(G_{2}, S_{0}, T_{0})$ if $E(G_{1}) \subseteq E(G_{2})$;
	
	\item[(ii)] $I(G; S_{0}, T_{0}) \geq I(G; S'_{0}, T_{0})$ if $S_{0} \subseteq S'_{0}$ and $I(G; S, T_{0})=r_{S}(\Gamma_{G}(T_{0}))$;
	
	\item[(iii)] $I(G; S_{0}, T_{0})$ is submodular on $2^{T}$.
\end{itemize}
Thanks to these three properties, the proof of Theorem~\ref{41} and its matroidal version is (exactly) the same as the proof of Theorem~\ref{32},
the check for details is skipped here. 

As an application, we note that  the matroidal version of Theorem~\ref{41} implies the following result of B\'{e}rczi and Frank \cite[Theorem 3]{berczi2}. To see this,
let $l=|S|$ and $ \{S_{1}, \ldots, S_{|S|} \}$ be a partition of $S$ with each part a singleton.     Let $f_{1} \equiv 0$, $g \equiv |S|$ and $f_{2}=f $, then (\ref{40}) and (\ref{88}) hold in the matroidal version of Theorem~\ref{41} (by replacing $|\overline{S_{0}} \cup \Gamma_{G_{0}}(T_{j})|$ with $r_{S}(\overline{S_{0}} \cup \Gamma_{G_{0}}(T_{j}) )$). And the matroidal version of Theorem~\ref{41} characterizes the $M_{S}$-covering problem with $d_{E}(s) \leq f(s)$ for $s \in S$. 

\begin{thm}(\cite{berczi2}) \label{55-B}
	Let $G_{0}=(S, T;E_{0})$ be a simple bipartite graph and $p_{T}$ be a positively intersecting supermodular function on $T$ such that $p_{T} \leq r_{S}(S)$. Let $M_{S}=(S, r_{S})$ be a matroid on $S$ with rank function $r_{S}$. Let $f: S \rightarrow \mathbb{N}$. Then there exists $E \subseteq E^{*}$ such that  	
	\begin{itemize}
		\item[(i)] the graph $G^{+}=(S,T; E \cup E_{0})$ is simple and $M_{S}$-covers $p_{T}$,
		\item[(ii)] $d_{E}(s)=f(s)$ for $s \in S$;
	\end{itemize}
	if and only if
	\begin{itemize}
		\item[(i)] for any $s \in S$, $d_{E_{0}}(s) + f(s) \leq |T|$;
		
		\item[(ii)] for any disjoint $T_{1}, \ldots, T_{n} \subseteq T$ and $S_{0} \subseteq S$,
		\begin{equation}\label{56}
		\sum_{j=1}^{n}(p_{T}(T_{j}) - r_{S}(\overline{S_{0}} \cup \Gamma_{G_{0}}(T_{j}) )) \leq \widetilde{f}(S_{0}).
		\end{equation}
	\end{itemize}
\end{thm}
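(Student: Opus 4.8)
The plan is to derive Theorem~\ref{55-B} directly from the matroidal version of Theorem~\ref{41} (whose proof, by the remark preceding Theorem~\ref{55-B}, runs exactly as that of Theorem~\ref{32} once one replaces $|\overline{S_0}\cup\Gamma_{G_0}(T_j)|$ throughout by $r_S(\overline{S_0}\cup\Gamma_{G_0}(T_j))$), together with a short ``degree--completion'' argument to pass from $d_E(s)\le f(s)$ to $d_E(s)=f(s)$. First I would fix the parameters: take $l=|S|$, let $\{S_1,\dots,S_{|S|}\}$ be the partition of $S$ into singletons $S_\alpha=\{s_\alpha\}$, and put $f_1\equiv 0$, $g\equiv|S|$, $f_2=f$. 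With these choices $g_{S_0}(t)=\min\{|S_0\setminus\Gamma_{G_0}(t)|,|S|\}=|S_0\setminus\Gamma_{G_0}(t)|$, condition~(iii) of Theorem~\ref{41} (that $d_E(t)\le g(t)=|S|$) is vacuous for a simple bipartite graph, and condition~(ii) of Theorem~\ref{41} reduces to $d_E(s)\le f(s)$ for every $s\in S$, the lower bound $0$ being empty. It remains to see that the hypotheses of the matroidal Theorem~\ref{41} collapse to condition~(ii) of Theorem~\ref{55-B}: inequality~(\ref{88}) becomes $\widetilde g_{\overline{S_0}}(T)\ge 0$, which is trivial; and since $p_T(T_j)\le r_S(S)\le|S|\le|S|\,|T_j|=\widetilde g(T_j)$ for every nonempty $T_j$, the matroidal form of~(\ref{40}) holds termwise and is automatic. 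The only surviving hypothesis is the matroidal form of~(\ref{42}), which after substituting $f_2(\alpha)=f(s_\alpha)$ reads
\[
\sum_{j=1}^{n}\bigl(p_T(T_j)-r_S(\overline{S_0}\cup\Gamma_{G_0}(T_j))\bigr)\le\widetilde f(S_0),
\]
i.e.\ exactly condition~(ii) of Theorem~\ref{55-B}.

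Next I would treat the two directions. For necessity: if $E$ exists with $d_E(s)=f(s)$, $G^+$ simple and $M_S$-covering $p_T$, then $E$ also witnesses the conclusion of the matroidal Theorem~\ref{41} in the weaker form $d_E(s)\le f(s)$, so its ``only if'' direction gives~(\ref{42}), i.e.\ (ii); and since $G^+$ is simple the $|T|$ edges available at $s$ already account for the $d_{E_0}(s)$ edges of $G_0$ and the $f(s)$ edges of $E$, so $d_{E_0}(s)+f(s)\le|T|$, i.e.\ (i). For sufficiency: assuming (i) and (ii), the matroidal Theorem~\ref{41} yields $E\subseteq E^*$ with $G^+=(S,T;E\cup E_0)$ simple, $M_S$-covering $p_T$, and $d_E(s)\le f(s)$ for all $s$. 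If some $d_E(s)<f(s)$, condition~(i) gives at least $|T|-d_{E_0}(s)-d_E(s)\ge f(s)-d_E(s)$ vertices $t\in T$ with $st\notin E_0\cup E$; add $f(s)-d_E(s)$ such edges $st$ to $E$. This keeps $G^+$ simple and can only enlarge each $\Gamma_{G^+}(T_0)$, hence preserves the $M_S$-covering inequality $r_S(\Gamma_{G^+}(T_0))\ge p_T(T_0)$ by monotonicity of $r_S$. Iterating over all $s\in S$ produces the required $E$ with $d_E(s)=f(s)$.

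I do not expect a genuine obstacle here: the argument is a routine verification of the parameter substitution plus the elementary degree-completion step, and the only nontrivial ingredient, the matroidal version of Theorem~\ref{41}, is already furnished by the paper via the three listed properties of $I(G;S_0,T_0)=r_S(\overline{S_0}\cup\Gamma_G(T_0))$ (monotonicity in the edge set, anti-monotonicity in $S_0$ when $I(G;S,T_0)=r_S(\Gamma_G(T_0))$, and submodularity on $2^T$). If anything needs care, it is only the bookkeeping in checking that~(\ref{40}) and~(\ref{88}) truly become vacuous under the choices $f_1\equiv0$, $g\equiv|S|$, which is immediate from $p_T\le r_S(S)\le|S|$.
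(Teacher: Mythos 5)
Your argument follows the same route the paper takes: specialize the matroidal version of Theorem~\ref{41} with $l=|S|$, singleton parts $S_\alpha=\{s_\alpha\}$, $f_1\equiv 0$, $g\equiv|S|$, $f_2=f$, check that the matroidal (\ref{40}) and (\ref{88}) become automatic using $p_T\le r_S(S)\le|S|$, and identify (\ref{42}) with condition~(ii). The paper's remark stops at ``characterizes the problem with $d_E(s)\le f(s)$,'' so your explicit degree-completion step (using condition~(i) to top up each $d_E(s)$ to exactly $f(s)$ while preserving simplicity and monotonically preserving $M_S$-covering) is a correct and welcome filling-in of a detail the paper leaves implicit, not a different method.
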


The following question is interesting to us: Is there some relationship between Theorem \ref{Cai-21} and our main result Theorem \ref{32}? We tend to think Theorem \ref{32} will 
derive Theorem \ref{Cai-21}, but this is open.

\noindent {\bf Acknowledgements:} 
This paper is dedicated to Professor Xuding Zhu on the occasion of his 60th birthday. 
We thank two anonymous referees for their very detailed and helpful comments that help improved the presentation of this paper.


\begin{thebibliography}{99} 	
	\bibitem{BF-1} K. B\'{e}rczi, A. Frank, 
	Variations for Lov\'asz' submodular ideas, in: M. Gr¨otschel, G.O.H. Katona (Eds.), Building Bridges
	Between Mathematics and Computer Science, in: Bolyai Society Series: Mathematical Studies, vol. 19, (2008), 137-164.
	
	\bibitem{BF-2} K. B\'{e}rczi, A. Frank,  
	Packing arborescences, in: S. Iwata (Ed.), RIMS Kokyuroku Bessatsu B23: Combinatorial
	Optimization and Discrete Algorithms, 2010, 1-31.
	
	\bibitem{berczi1} K. B\'{e}rczi, A. Frank, Supermodularity in unweighted graph optimization I: Branchings and matchings, Math. Oper. Res. 43(3) (2018), 726-753.
	
	\bibitem{berczi2} K. B\'{e}rczi, A. Frank, Supermodularity in unweighted graph optimization II: Matroidal term rank augmentation, Math. Oper. Res. 43 (3) (2018), 754-762.
	
	\bibitem{berczi3} K. B\'{e}rczi, A. Frank, Supermodularity in unweighted graph optimization III: Highly-connected digraphs, Math. Oper. Res. 43 (3) (2018), 763-780.
	
	\bibitem{berczi4} K. B\'{e}rczi, A. Frank, Supermodularity in unweighted graph optimization IV: Algorithms, in preparation. 	
	
	\bibitem{berczi-16} K. B\'{e}rczi, T. Kir\'{a}ly, Y. Kobayashi, Covering intersecting bi-set families under matroid constraints. SIAM J. Discrete Math. 30(3) (2016), 1758-1774.

	\bibitem{C-6}
	 M.-C. Cai,  Arc-disjoint arborescences of digraphs, J. Graph Theory 7(2) (1983), 235-240.
	
	\bibitem{PJ} P. A. Catlin, J. W. Grossman, A. H. Hobbs, H.-J. Lai,
	Fractional arboricity, strength and principal partitions in graphs
	and matroids, Discrete Math. Appl. 40 (1992), 285--302.
	
	\bibitem{CKP}
	C. C. Chen, K. M. Koh, Y.H. Peng, On the higher-order edge toughness of a graph,
	Discrete Math. 111 (1993), 113--123.
	

	\bibitem{CL}
	Z.-H. Chen, H.-J. Lai, The higher-order edge toughness of a graph and
	truncated uniformly dense matroids, J. Combin. Math. Combin. Comput. 22 (1996),
	157--160.

      \bibitem{durand-nguyen-szigeti-13} O. Durand de Gevigney, V.-H. Nguyen, Z. Szigeti, Matroid-based packing of arborescences, SIAM J. Discrete Math., 27 (2013),  567-574.	

	\bibitem{edmonds} J. Edmonds,  Edge-disjoint branchings, Combinatorial algorithms (Courant Comput. Sci. Sympos. 9, New York Univ., New York, 1972), pp. 91--96. Algorithmics Press, New York, 1973.
	
	
	\bibitem{fortier-kiraly-leonard-szigeti-talon-18} Q. Fortier, Cs. Kir\'{a}ly, M. L\'{e}onard, Z. Szigeti, A. Talon, Old and new results on packing arborescences in directed hypergraphs, Discrete Appl. Math. 242 (2018), 26-33.

	\bibitem{F-13}
	A. Frank, On disjoint trees and arborescences,  Algebraic Methods in Graph Theory, Colloquia Mathematica Soc. J. Bolyai, Vol. 25
	(North-Holland, Amsterdam), 159-169 (1978).
	
	\bibitem{frank} A. Frank, Covering branching, Acta Scientiarum Mathematicarum (Szeged) 41
	(1979), 77-81.
	
	\bibitem{frank-T-22}
	A. Frank, \'E. Tardos, An application of submodular flows, Linear Algebra and Its Appl. 114-115  (1989), 329-348.
	
	\bibitem{GJLWYZ}
	G. Fan,  H. Jiang, P. Li, D. B. West, D. Yang, X. Zhu, Extensions of matroid covering and packing, European J. Combin. 76 (2019), 117-122.
	
	\bibitem{fuji-10} S. Fujishige, A note on disjoint arborescences, Combinatorica, 30 (2010), 247-252.

	\bibitem{kam}
	N. Kamiyama, 
	Arborescence problems in directed graphs: Theorems and algorithms, 
	Interdisciplinary Inform. Sci. 20(1) (2014), 51-70.

     \bibitem{kam-ka-ta-09} N. Kamiyama, N. Katoh, A. Takizawa, Arc-disjoint in-trees in directed graphs, Combinatorica, 29 (2009), 197-214. 

 \bibitem{kiraly-16} Cs. Kir\'{a}ly, On maximal independent arborescence packing, SIAM J. Discrete. Math. 30 (4) (2016), 2107-2114. 


\bibitem{kiraly-szigeti-tanigawa-18} Cs. Kir\'{a}ly, Z. Szigeti, S. Tanigawa, Packing of Arborescences with Matroid Constraints Via Matroid Intersection, EGRES Technical Reports, TR-2018-08, Egerv\'ary Research Group, 2018.


    \bibitem{le-wa-15} M. Leston-Rey, Y. Wakabayashi, Packing in generalized kernel systems, Math. Program. 149 (2015), 209-251.

	\bibitem{L-30}
	L. Lov\'asz,  A generalization of Konig's theorem, Acta. Math. Acad. Sci. Hungar. 21(3-4) (1970), 443-446.
	
	\bibitem{L-31}
	L. Lov\'asz, On two minimax theorems in graph theory,  J. Combinatorial Theory, Ser. B 21(2)  (1976), 96-103.
	
	\bibitem{matsuoka-tanigawa-19} T. Matsuoka, S. Tanigawa, On reachability mixed arborescence packing, Discrete Optimization 32 (2019), 1-10. 

	\bibitem{NW} C. St. J. A. Nash-Williams, Decompositions of finite graphs into
	forests, J. Lond. Math. Soc. 39 (1964), 12.
		
	\bibitem{CP} C. Payan, Graphes equilibre et arboricit\'e
	rationnelle, European J. Combin. 7 (1986), 263-270.
	
	\bibitem{Sch}
	A. Schrijver A, Combinatorial Optimization: Polyhedra and Efficiency, Algorithms and Combinatorics, Vol. 24 (Springer, Berlin) (2003).
	
	\bibitem{szego} L. Szeg\H{o}, On covering intersecting set-systems by digraphs, Discrete Math. 234 (1-3) (2001), 187-189.
\end{thebibliography}
\end{document}